\theoremstyle{plain}
\newtheorem{theorem}{Theorem}
\newtheorem{lemma}{Lemma}
\newtheorem{proposition}{Proposition}
\newtheorem{claim}{Claim}
\newtheorem{case}{Case}
\theoremstyle{definition}
\newtheorem{question}[theorem]{Question}
\newtheorem{example}[theorem]{Example}
\numberwithin{equation}{section}
\definecolor{airforceblue}{rgb}{0.36, 0.54, 0.66}
\DeclareMathOperator{\B}{B}
\DeclareMathOperator{\E}{E}
\DeclareMathOperator{\conv}{conv}
\DeclareMathOperator*{\qr}{qr}
\DeclareMathOperator*{\dist}{\sf dist}
\DeclareMathOperator*{\hyperbolic}{\sf Hyperbolic}
\DeclareMathOperator*{\intDeltaSlim}{\sf Interval-Slim}
\DeclareMathOperator*{\isometric}{\sf Isometric}
\DeclareMathOperator*{\subgraph}{\sf Subgraph}
\DeclareMathOperator*{\acyclicity}{\sf Acyclicity}
\DeclareMathOperator*{\bipartite}{\sf Bipartite}
\DeclareMathOperator*{\even}{\sf Even}
\DeclareMathOperator*{\connectivity}{\sf Connectivity}
\DeclareMathOperator*{\tree}{\sf Tree}
\DeclareMathOperator*{\tri}{\sf triangle}
\DeclareMathOperator*{\squ}{\sf square}
\DeclareMathOperator*{\pentagon}{\sf pentagon}
\DeclareMathOperator*{\tc}{\sf Triangle-Condition}
\DeclareMathOperator*{\qc}{\sf Quadrangle-Condition}
\DeclareMathOperator*{\aqc}{\sf Almost-Quadrangle-Condition}
\DeclareMathOperator*{\pc}{\sf Pentagon-Condition}
\DeclareMathOperator*{\tpc}{\sf Triangle-Pentagon-Condition}
\DeclareMathOperator*{\inc}{\sf Interval-Neighborhood-Condition}
\DeclareMathOperator*{\ii}{\sf closest}
\DeclareMathOperator*{\boundary}{\sf InBdy}
\DeclareMathOperator*{\intBoundary}{\sf InIntBdy}
\DeclareMathOperator*{\phstable}{\sf ph-stable}
\DeclareMathOperator*{\degTconv}{\sf deg-3-conv}
\DeclareMathOperator*{\mtriangle}{\sf metric-triangle}
\DeclareMathOperator*{\colinear}{\sf colinear}
\DeclareMathOperator*{\quasimed}{\sf quasi-median}
\DeclareMathOperator*{\med}{\sf median}
\DeclareMathOperator*{\convW}{\sf convex-halfspace}
\DeclareMathOperator*{\convnonW}{\sf convex-complement}
\DeclareMathOperator*{\convWeq}{\sf convex-halfspace-eq}
\DeclareMathOperator*{\convnonWeq}{\sf convex-complement-eq}
\DeclareMathOperator*{\gated}{\sf gated-int}
\DeclareMathOperator*{\cube}{\sf cube-int}
\DeclareMathOperator*{\antipodaluv}{\sf antipodal-int}
\DeclareMathOperator*{\stronglyequilateral}{\sf strongly-equilateral}
\DeclareMathOperator*{\intIncl}{\sf int-incl}
\DeclareMathOperator{\poly}{poly}
\newcommand{\thick}{\textsf{Thick}\xspace}
\newcommand{\poc}{\textsf{Positioning-Condition}\xspace}
\newcommand{\tict}{\textsf{2-Interval-Condition-3}\xspace}
\newcommand{\ticq}{\textsf{2-Interval-Condition-4}\xspace}
\newcommand{\wmodular}{\textsf{Weakly Modular}\xspace}
\newcommand{\modular}{\textsf{Modular}\xspace}
\newcommand{\qmodular}{\textsf{Quasi-Modular}\xspace}
\newcommand{\pmodular}{\textsf{Pseudo-Modular}\xspace}
\newcommand{\meshed}{\textsf{Meshed}\xspace}
\newcommand{\strongEquil}{\textsf{Strongly-Equilateral Triangles}\xspace}
\newcommand{\median}{\textsf{Median}\xspace}
\newcommand{\pmedian}{\textsf{Pseudo-Median}\xspace}
\newcommand{\qmedian}{\textsf{Quasi-Median}\xspace}
\newcommand{\wmedian}{\textsf{Weakly Median}\xspace}
\newcommand{\amedian}{\textsf{Almost Median}\xspace}
\newcommand{\netpcube}{\textsf{Netlike Partial Cube}\xspace}
\newcommand{\bridged}{\textsf{Bridged}\xspace}
\newcommand{\wbridged}{\textsf{Weakly Bridged}\xspace}
\newcommand{\cballs}{\textsf{Convex Balls}\xspace}
\newcommand{\bucolic}{\textsf{Bucolic}\xspace}
\newcommand{\helly}{\textsf{Helly}\xspace}
\newcommand{\chelly}{\textsf{Clique-Helly}\xspace}
\newcommand{\cqwq}{\textsf{C4W4}\xspace}
\newcommand{\dualpolar}{\textsf{Dual Polar}\xspace}
\newcommand{\swmodular}{\textsf{Sweakly Modular}\xspace}
\newcommand{\stmodular}{\textsf{Strongly Modular}\xspace}
\newcommand{\matroid}{\textsf{Matroids}\xspace}
\newcommand{\deltamatroid}{\textsf{$\Delta$-Matroids}\xspace}
\newcommand{\pcube}{\textsf{Partial Cube}\xspace}
\newcommand{\phamming}{\textsf{Partial Hamming}\xspace}
\newcommand{\ample}{\textsf{Ample}\xspace}
\newcommand{\om}{\textsf{Oriented Matroid}\xspace}
\newcommand{\com}{\textsf{COM}\xspace}
\newcommand{\pasch}{\textsf{Pasch}\xspace}
\newcommand{\peano}{\textsf{Peano}\xspace}
\newcommand{\convint}{\textsf{Convex Intervals}\xspace}
\newcommand{\sandglass}{\textsf{Sand Glass}\xspace}
\newcommand{\sepvv}{\textsf{S2separability}\xspace}
\newcommand{\sepvc}{\textsf{S3separability}\xspace}
\newcommand{\sepcc}{\textsf{S4separability}\xspace}
\newcommand{\jhc}{\textsf{JHC}\xspace}
\newcommand{\cellular}{\textsf{Cellular}\xspace}
\newcommand{\antipodalglobal}{\textsf{Antipodal}\xspace}
\newcommand{\wmpasch}{\textsf{Weakly Modular Pasch}\xspace}
\newcommand{\bpasch}{\textsf{Bipartite Pasch}\xspace}
\newcommand{\paschpeano}{\textsf{Pasch-Peano}\xspace}
\newcommand{\bpeano}{\textsf{Bipartite Peano}\xspace}
\newcommand{\block}{\textsf{Block Graph}\xspace}
\newcommand{\dhg}{\textsf{Distance Hereditary}\xspace}
\newcommand{\pt}{\textsf{Ptolemaic}\xspace}
\newcommand{\metric}{\textsf{Metric}\xspace}
\newcommand{\chordal}{\textsf{Chordal}\xspace}
\newcommand{\planar}{\textsf{Planar}\xspace}
\newcommand{\partialJ}{\textsf{Partial Johnson}\xspace}
\newcommand{\dismantlable}{\textsf{Dismantlable}\xspace}
\newcommand{\eulerian}{\textsf{Eulerian}\xspace}
\newcommand{\dpo}{\textsf{DPO}\xspace}
\newcommand{\PoC}{\mathrm{PosC}\xspace}
\newcommand{\TiCT}{\mathrm{2IC3}\xspace}
\newcommand{\TiCQ}{\mathrm{2IC4}\xspace}
\newcommand{\LiC}{\mathrm{LC}\xspace}
 \newcommand{\INC}{\mathrm{INC}}
\newcommand{\TC}{\mathrm{TC}}
\newcommand{\QC}{\mathrm{QC}}
\newcommand{\AQC}{\mathrm{AQC}}
 \newcommand{\TPC}{\mathrm{TPC}}
\newcommand{\bA}{\mathbf{A}}
\newcommand{\bB}{\mathbf{B}}
\newcommand{\bG}{\mathbf{G}}
\newcommand{\C}{\mathcal{C}}
\begin{document}

\medskip

\centerline{\Large\bf First-Order Logic Axiomatization of Metric Graph Theory}

\vspace{6mm}
\centerline{J\'er\'emie Chalopin, Manoj Changat, Victor Chepoi, and  Jeny Jacob}

\vspace{4mm}
\begin{small}
\centerline{Aix-Marseille Universit\'e, CNRS, Universit\'e de Toulon, LIS, Marseille, France}
\centerline{\texttt{\{jeremie.chalopin,victor.chepoi\}@lis-lab.fr}}
\end{small}

\vspace{4mm}
\begin{small}
\centerline{University of Kerala, Department of Futures Studies,  Trivandum, India}
\centerline{\texttt{\{mchangat,jenyjacobktr\}@gmail.com}}
\end{small}

\medskip

\bigskip\noindent
{\footnotesize {\bf Abstract.} The main goal of this note is to provide a \emph{First-Order Logic with Betweenness (FOLB)}
axiomatization of the main classes of graphs occurring in Metric Graph Theory, in analogy to Tarski's axiomatization
of Euclidean geometry. We provide such an axiomatization for weakly modular graphs and their principal subclasses (median and modular graphs,
bridged graphs, Helly graphs, dual polar graphs, etc), basis graphs of matroids and even $\Delta$-matroids, partial cubes
and their subclasses (ample partial cubes, tope graphs of oriented matroids and complexes of oriented matroids, bipartite Pasch and Peano graphs, cellular and hypercellular partial cubes,
almost-median graphs, netlike partial cubes), and Gromov hyperbolic graphs. On the other hand, we show that some classes of graphs (including chordal,
planar, Eulerian, and dismantlable graphs), closely related with  Metric Graph Theory,
but defined in a combinatorial or topological way, do not allow such an axiomatization.}

\section{Introduction}

\emph{First-Order Logic (FOL)} is the language of classical logic most widely used  in various areas of mathematics and computer science.
First-order logic uses quantified variables over non-logical objects and allows the use of sentences that contain variables.
The {\it first-order language} of graph theory  is defined in the usual way with variables ranging over the vertex-set
and the edge relation as the primitive relation. However, not many graph properties can
be expressed using this logic: such fundamental properties  as  Connectivity, Acyclicity, Bipartiteness, Planarity,
Eulerian, and Hamiltonian Path  are not first-order definable on finite graphs~\cite{Kol,hamiltonian}.
Therefore developing a comprehensive first-order theory on graphs with more expressive power is an important problem.
A possible approach towards such a theory  is to transpose  to graphs
Tarski's axiomatic approach to Euclidean geometry~\cite{Ta,TaGi,SchSzmTa}.

Tarski developed a First-Order Logic theory of Euclidean geometry using only ``points'' as the  ``primitive geometric objects'' in contrast to other theories of Euclidean geometry of Hilbert and Birkhoff,
where points, lines, planes, etc., are all primitive ``geometrical objects''.  In Tarski's theory,
there are two primitive geometrical relations (predicats): the ternary relation $B$ of ``betweenness'' and the quaternary relation $\equiv$ of ``equidistance''
or ``congruence of segments''. The elegance of Tarski's axiomatic theory of geometry is that the axiom system admits elimination of quantifiers:
that is, every formula is provably equivalent (on the basis of the axioms) to a Boolean combination of basic formulas. The theory is complete:
every assertion is either provable or refutable; the theory is decidable -- there is a mechanical procedure for determining whether or not any given
assertion is provable and also there is a constructive consistency proof for the theory.
Tarski's axioms  are an axiom set for the substantial fragment of Euclidean geometry that is formulable in first-order logic with identity, and requiring
no set theory~\cite{Ta,TaGi,SchSzmTa}.

The main goal of this article is the {\it First-Order Logic axiomatization of Metric Graph Theory} using the notion of \emph{Betweenness}, in a similar vein as Tarski's First-Order Logic approach to  Euclidean geometry.  The natural betweenness on graphs is the {\it metric betweenness}  (or {\it shortest path betweenness})
resulting from the standard path metric $d$ of
connected graphs  $G=(V,E)$ and  defined using the ternary relation
$\B(abc)$ on the vertex set $V$ of a graph $G$ meaning that ``the vertex $b$ lies on some shortest path of $G$ between the vertices $a$ and $c$''. We abbreviate the \emph{Fist Order Logic with Betweenness} of graphs by \emph{FOLB}. 

The main subject of {\it Metric Graph
Theory} (MGT) is the investigation and characterization of graph classes
whose metric satisfies main properties of
classical metric geometries  like Euclidean $\ell_2$-geometry (and more generally, the $\ell_1$- and $\ell_{\infty}$-geometries),
hyperbolic spaces, hypercubes, trees. Such central properties are
convexity of balls, Helly property for balls, geometry of geodesic or
metric triangles, isometric and low-distortion embeddings, the retractions,
the four-point conditions, uniqueness or existence of medians, etc.
The main classes of graphs central to MGT are median graphs, Helly graphs, partial cubes and
${\ell}_1$--graphs, bridged graphs, graphs with convex balls, Gromov hyperbolic graphs, modular and weakly modular graphs.
Other classes surprisingly arise  from  combinatorics and geometry: basis graphs
of matroids, even $\Delta$-matroids, tope graphs of
oriented matroids, dual polar spaces.  For a survey of classes arising in MGT,  see the survey~\cite{BaCh_survey}). For a theory
of weakly modular graphs and their subclasses, see the paper~\cite{CCHO} and for partial cubes and $\ell_1$-graphs, see the
books~\cite{DeLa} and~\cite{HaImKl}.

In this paper, we show that all these graph classes occurring in MGT
are definable in FOLB. On the other hand, we show that chordal graphs,
dismantlable graphs, Eulerian graphs, planar graphs, and partial
Johnson graphs are not definable in FOLB. Chordal graphs form a
subclass of bridged graphs, dismantlable graphs form a superclass of
bridged and Helly graphs, partial Johnson graphs generalize partial
cubes. Since often the FOLB-definability of a graph class is based not
on its initial definition but on a characterization, which is not the
principal or nicest one, when we introduce a graph class we define it,
briefly motivate its importance, and present the used
characterization. Then we refer to mentioned above papers and books or
to other references for a more detailed treatment of each
class. Notice that many of the classes from metric graph theory
contain all trees but not all cycles; they are often defined by
forbidding isometric subgraphs and cycles of given lengths.
Consequently, these classes cannot be defined in the standard First
Order Logic on graphs: indeed, the proof establishing that
$\acyclicity$ is not FOL-definable immediately implies that such
classes are not FOL-definable.

The paper is organized as follows. In Section \ref{sec:preliminaries} we present the main basic definitions about graphs and First Order Logic.
We also recall the Ehrenfeucht-Fra\"{i}ss\'e games as the tool of proving that some queries are not definable in FOL for graphs.
In Section \ref{sec:FOLB} we introduce the First Order Logic
with Betweenness for graphs and give the first examples of queries which are definable in this logic. In Section \ref{sec:weakly-modular} we show that weakly modular
graphs and their main subclasses and super-classes occurring in Metric Graph Theory are FOLB-definable. In Section \ref{sec:partial-cubes} we show FOLB-definability
of partial cubes and some of their subclasses and super-classes. Section \ref{sec:hyperbolicity} is devoted to FOLB-definability of Gromov hyperbolic graphs. In Section~\ref{notfolb}, we establish that some classes of graphs are not FOLB-definable. In Section~\ref{FO-poly}, we explain how such FOLB characterization lead to polynomial time recognition algorithms.

\section{Preliminaries}\label{sec:preliminaries}

In this section, we recall the main definitions about graphs and the first-order logic. In the three subsections about first-order logic we
closely follow the paper~\cite{Kol} by Kolaitis  (we also use the book by Libkin~\cite{Lib}).

\subsection{Graphs}
A \emph{graph} $G=(V,E)$ consists of a set of vertices $V:=V(G)$ and a set of edges $E:=E(G)\subseteq V\times V$. All graphs considered in this paper are
finite, undirected, connected, and contain no multiple edges nor loops. For two distinct vertices $v,w\in V$ we write $v\sim w$
(respectively, $v\nsim w$) when there is an (respectively, there is no) edge connecting
$v$ with $w$, that is, when $vw:=\{v,w \} \in E$.
For vertices $v,w_1,\ldots,w_k$, we write $v\sim w_1,\ldots,w_k$ (respectively, $v\nsim w_1,\ldots,w_k$) or $v\sim A$ (respectively,
$v\nsim A$) when $v\sim w_i$ (respectively, $v\nsim w_i$), for each $i=1,\ldots, k$, where $A=\{ w_1,\ldots,w_k\}$.
As maps between graphs $G=(V,E)$ and $G'=(V',E')$ we always consider \emph{simplicial maps}, that is functions of the form
$f\colon V\to V'$ such that if $v\sim w$ in $G$ then $f(v)=f(w)$ or $f(v)\sim f(w)$ in $G'$.
A $(u,w)$--path $(v_0=u,v_1,\ldots,v_k=w)$ of \emph{length} $k$ is a sequence of vertices with $v_i\sim v_{i+1}$. If $k=2,$
then we call $P$ a \emph{2-path} of $G$. If $v_i\ne v_j$ for $|i-j|\ge 1$, then $P$ is
called a \emph{simple $(a,b)$--path}.
A $k$--cycle $(v_0,v_1,\ldots,v_{k-1})$ is a path $(v_0,v_1,\ldots,v_{k-1},v_0)$.
For a subset
$A\subseteq V,$ the subgraph of $G=(V,E)$  \emph{induced by} $A$
is the graph $G(A)=(A,E')$ such that $uv\in E'$ if and only if $uv\in E$
($G(A)$ is sometimes called a \emph{full subgraph} of $G$).
A \emph{square} $uvwz$ (respectively, \emph{triangle} $uvw$, \emph{pentagon} $uvwxz$) is an induced $4$--cycle $(u,v,w,z)$ (respectively, $3$--cycle $(u,v,w)$, $5$-cycle $(u,v,w,x,z)$).

The \emph{distance} $d(u,v)=d_G(u,v)$ between two vertices $u$ and $v$ of a graph $G$ is the
length of a shortest $(u,v)$--path.  For a vertex $v$ of $G$ and an integer $r\ge 1$, we  denote  by $B_r(v,G)$ (or by $B_r(v)$)
the \emph{ball} in $G$
(and the subgraph induced by this ball)  of radius $r$ centered at  $v$, that is,
$B_r(v,G)=\{ x\in V: d(v,x)\le r\}.$ More generally, the $r$--\emph{ball  around a set} $A\subseteq V$
is the set (or the subgraph induced by) $B_r(A,G)=\{ v\in V: d(v,A)\le r\},$ where $d(v,A)=\mbox{min} \{ d(v,x): x\in A\}$.
As usual, $N(v)=B_1(v,G)\setminus\{ v\}$ denotes the set of neighbors of a vertex
$v$ in $G$. A graph $G=(V,E)$
is \emph{isometrically embeddable} into a graph $H=(W,F)$
if there exists a mapping $\varphi : V\rightarrow W$ such that $d_H(\varphi (u),\varphi
(v))= d_G(u,v)$ for all vertices $u,v\in V$. More generally, for an integer $k\ge 1$, a graph $G=(V,E)$
is \emph{scale $k$ embeddable} into a graph $H=(W,F)$
if there exists a mapping $\varphi : V\rightarrow W$ such that $d_H(\varphi (u),\varphi
(v))=k \cdot d_G(u,v)$ for all vertices $u,v\in V$. A \emph{retraction}
$\varphi$ of a graph $G$ is an idempotent nonexpansive mapping of $G$ into
itself, that is, $\varphi^2=\varphi:V(G)\rightarrow V(G)$ with $d(\varphi
(x),\varphi (y))\le d(x,y)$ for all $x,y\in W$.  The subgraph of $G$
induced by the image of $G$ under $\varphi$ is referred to as a \emph{retract} of $G$.

The \emph{interval}
$I(u,v)$ between $u$ and $v$ consists of all vertices on shortest
$(u,v)$--paths, that is, of all vertices (metrically) \emph{between} $u$
and $v$: $I(u,v)=\{ x\in V: d(u,x)+d(x,v)=d(u,v)\}.$  If $d(u,v)=2$, then $I(u,v)$ is called a \emph{2-interval}.
A 2-interval $I(u,v)$ is called \emph{thick} if $I(u,v)$ contains two non-adjacent vertices $x,y\in I(u,v)\setminus \{ u,v\}$.
A graph $G$ is called \emph{thick} if all 2-intervals of $G$ are thick.   A subgraph of $G$ (or the corresponding vertex-set $A$) is called \emph{convex}
if it includes the interval of $G$ between any pair of its
vertices. The smallest convex subgraph containing a given subgraph $S$
is called the \emph{convex hull} of $S$ and is denoted by conv$(S)$.
A \emph{halfspace} is a convex set of $G$ whose complement is convex. An induced subgraph $H$ (or the corresponding vertex-set of $H$)
of a graph $G$
is \emph{gated} if for every vertex $x$ outside $H$ there
exists a vertex $x'$ in $H$ (the \emph{gate} of $x$)
such that  $x'\in I(x,y)$ for any $y$ of $H$. Gated sets are convex and
the intersection of two gated sets is
gated. By Zorn's lemma there exists a smallest gated subgraph
containing a given subgraph $S$, called the
\emph{gated hull} of $S$.

Let $G_{i}$, $i \in \Lambda$ be an arbitrary family of graphs. The
\emph{Cartesian product} $\prod_{i \in \Lambda} G_{i}$ is a graph whose vertices
are all functions $x: i \mapsto x_{i}$, $x_{i} \in V(G_{i})$ and
two vertices $x,y$ are adjacent if there exists an index $j \in \Lambda$
such that $x_{j} y_{j} \in E(G_{j})$ and $x_{i} = y_{i}$ for all $i
\neq j$. Note that a Cartesian product of infinitely many nontrivial
graphs is disconnected. Therefore, in this case the connected
components of the Cartesian product are called \emph{weak Cartesian products}.
The \emph{direct
product}  $\boxtimes_{i \in \Lambda} G_{i}$  of graphs  $G_{i}$, $i \in \Lambda$
is a graph having the same set of vertices as the Cartesian product
and two vertices $x,y$ are adjacent if
$x_{i} y_{i} \in E(G_{i})$ or $x_{i} = y_{i}$ for all $i\in \Lambda$.

We continue with the definition of some graphs. The complete graph on $n$ vertices is denote by $K_n$ and the complete bipartite graph with parts of size $n$ and $m$ by $K_{n,m}$.
The {\em wheel} $W_k$ is a graph obtained by connecting a single
vertex -- the {\em central vertex} $c$ -- to all vertices of the
$k$--cycle $(x_1,x_2, \ldots, x_k)$; the {\it almost wheel}
$W_k^{-}$ is the graph obtained from $W_k$ by deleting a spoke (i.e.,
an edge between the central vertex $c$ and a vertex $x_i$ of the
$k$--cycle). Analogously $K^-_4$ and $K^-_{3,3}$ are the graphs obtained from $K_4$ and $K_{3,3}$ by removing one edge.
An $n$--{\it octahedron} $K_{n\times 2}$ (or, a {\it hyperoctahedron}, for short) is the complete graph $K_{2n}$
on $2n$ vertices minus a perfect matching.
A {\it hypercube} $Q_m$ of dimension $m$ is a graph having the  subsets of a set $X$ of size $m$ as vertices
and two  such sets $A,B$ are adjacent in $Q_m$
if and only if $|A\triangle B|=1$. A {\it halved cube} $\frac{1}{2}Q_m$ has the vertices of a hypercube $Q_m$ corresponding to subsets of $X$ of even cardinality
as vertices and two  such vertices are adjacent in $\frac{1}{2}Q_m$  if and only if their  distance
in $Q_m$ is 2 (analogously one can define a halved cube on finite subsets of odd cardinality).
For a positive integer $k$, the {\it Johnson graph}  $J(m,k)$ has the subsets of $X$ of size $k$ as vertices and two such vertices are adjacent in $J(m,k)$
if and only if their  distance in $Q_m$ is $2$.  All Johnson graphs $J(m,k)$ with even $k$ are isometric subgraphs of the halved cube $\frac{1}{2}Q_m$ and the
halved cube $\frac{1}{2}Q_m$ is scale 2 embedded in the hypercube $Q_m$.
The hypercube $Q_m$ can be viewed as the Cartesian product of $m$ copies of $K_2$. The \emph{Hamming graph} $H_{m_1,\ldots,m_d}$ is a Cartesian product of the
complete graphs $K_{m_1},\ldots K_{m_d}$.

\subsection{First-Order Logic (FOL)}\label{fol}
In this subsection we recall the main definitions from First-Order Logic.
A \emph{vocabulary} $\sigma=(P_1,\ldots,c_1,\ldots,c_s)$ consists of a set of \emph{constant symbols} and a set of \emph{relation symbols} (called also \emph{predicates}) of specified arities. Given a vocabulary $\sigma$, the variables and the constant symbols are the  \emph{$\sigma$-terms}.  The set of \emph{formulas} is defined inductively as follows:
\begin{itemize}
\item given terms $t_1,\ldots,t_k$ and a $k$-ary predicate $P$, then $P(t_1,\ldots,t_k)$ is a formula;;
\item for each formulas $F,F'$, $\neg F, (F\wedge F')$, and $F\vee F')$ are formulas;
\item if $F$ is a formula and $x$ is a variable, then $\exists x F$ and $\forall x F$ are formulas.
\end{itemize}
\emph{Atomic formulas} are those constructed according to the first rule. A general \emph{first-order formula} is build
up from atomic formulas using Boolean connectives and the two quantifiers. Given a vocabulary $\sigma$, a \emph{$\sigma$-structure} is a tuple $\bA=(A,P^{\bA}_1,\ldots,P^{\bA}_m, c^{\bA}_1,\ldots c^{\bA}_1)$ consisting of
\begin{itemize}
\item a non-empty set $A$, called the \emph{universe};
\item for each constant $c_i$, an element $c^{\bA}_i$ of $A$;
\item for each $k$-ary predicate $P_i$ in $\sigma$, a $k$-ary relation $P^{\bA}_i\subset \underbrace{A\times\cdots\times A}_k$.
\end{itemize}
A \emph{finite $\sigma$-structure} is a $\sigma$-structure $\bA$ whose universe $A$ is finite.

Let $\bA=(A,P^{\bA}_1,\ldots,P^{\bA}_m, c^{\bA}_1,\ldots,c^{\bA}_s)$ and $\bB=(B,P^{\bB}_1,\ldots,P^{\bB}_m,c^{\bB}_1,\ldots,c^{\bB}_s)$ be two $\sigma$-structures. An \emph{isomorphism} between $\bA$ and $\bB$ is a mapping $h: A\rightarrow B$ that satisfies the following conditions:
\begin{itemize}
\item $h$ is a one-to-one and onto function;
\item for every constant symbol $c_j, j=1,\ldots,s$, we have $h(c^{\bA}_j)=c^{\bB}_j$;
\item for each relation symbol $P_i, i=1,\ldots,m$, of arity $t$ and any $t$-tuple $(a_1,\ldots,a_t)$ from $A$, we have $P^{\bA}_i(a_1,\ldots,a_t)$ if and only if $P^{\bB}_i(h(a_1),\ldots,h(a_t))$.
\end{itemize}

Given two structures  $\mathbf{A}=(A,P^{\bA}_1,\ldots,P^{\bA}_m, c^{\bA}_1,\ldots,c^{\bA}_s)$ and $\bB=(B,P^{\bB}_1,\ldots,P^{\bB}_m,c^{\bB}_1,\ldots,c^{\bB}_s)$, $\bB$ is a \emph{substructure} of $\bA$ if $B\subseteq A$, each $P^{\bB}_i$  is the restriction of $P^{\bA}_i$ to
$B$ (which means that $P^{\bB}_i=P^{\bA}_i\cap B^t$) and $c^{\bB}_j=c^{\bA}_j, j=1,\ldots,s$.  If $\bA$ is a $\sigma$-structure and $D$ is a subset of $A$,
then the \emph{substructure} of $\bA$ \emph{generated} by $D$ is the structure $\bA \upharpoonright D$ having the set $D\cup \{ c^{\bA}_1,\ldots,c^{\bA}_s\}$ as its universe and having the restrictions of the relations
$P^{\bA}_i$ on $D\cup \{ c^{\bA}_1,\ldots,c^{\bA}_s\}$ as its relations. A \emph{partial isomorphism from} $\bA$ to $\bB$ is an isomorphism from a substructure of $\bA$ to a substructure of $\bB$.
Given a structure $\bA$, a variable $x$, and $a\in A$, the structure $\bA_{[x\mapsto a]}$ is the same as $\bA$ except that $x^{\bA_{[x\rightarrow a]}}=a$.

\begin{example}\label{graph} A (undirected) \emph{graph} is a $\sigma$-structure $\bG=(V,E)$ with the vertex-set $V$ as the universe and the vocabulary $\sigma$ with one
binary relation symbol $E$, where $E$ is interpreted as the edge relation.
The subgraph of $\bG$ induced by a set of vertices $D$ of $\bG$ is the substructure of $\bG$ generated by $D$.
\end{example}

Let $\bA$ be a $\sigma$-structure with universe $A$. The \emph{value} $\bA[t]$ of each term $t$ is an element of the universe $A$, inductively defined as follows:
\begin{itemize}
\item for a constant symbol $c$, set $\bA[c]=c^{\bA}$;
\item for a variable $x$, set $\bA[x]=x^{\bA}$;
\item for a term $f(t_1,\ldots,f_k)$, where $f$ is a $k$-ary function symbol and $t_1,\ldots,t_k$ are terms, set $\bA[f(t_1,\ldots,t_k)]=f^{\bA}(\bA[t_1],\ldots,\bA[t_k])$.
\end{itemize}

The \emph{satisfaction relation} $\bA\vDash F$ (which means that $\bA$ satisfies $F$ or that $\bA$ models F) between a $\sigma$-structure $\bA$ and $\sigma$-formula $F$ is defined
by induction over the structure of $F$:
\begin{itemize}
\item $\bA\vDash P(t_1,\ldots,t_k)$ if and only if $(\bA[t_1],\ldots,\bA[t_k])\in P^{\bA}$;
\item $\bA\vDash (F\wedge F')$ if and only if $\bA\vDash F$ and $\bA\vDash F'$;
\item $\bA\vDash (F\vee F')$ if and only if $\bA\vDash F$ or $\bA\vDash F'$;
\item $\bA\vDash \neg F$ if and only if $\bA\nvDash F$;
\item $\bA\vDash \exists x F$ if and only if there exists $a\in A$ such that $\bA_{[x\mapsto a]}\vDash F$;
\item $\bA\vDash \forall x F$ if and only if $\bA_{[x\mapsto a]}\vDash F$ for all $a\in A$;
\item $\bA \vDash t_1=t_2$ if and only if $\bA[t_1]=\bA[t_2]$.
\end{itemize}

A first-order formula $F$ over signature $\sigma$ is \emph{satisfiable} if $\bA\vDash F$ for some $\sigma$-structure $\bA$. If $F$ is not satisfiable it is
called \emph{unsatisfiable}. $F$ is called \emph{valid} if $\bA\vDash F$ for every $\sigma$-structure $\bA$.

Following the terminology of~\cite{Kol,Lib}, we continue with the concept of \emph{query}, one of the most fundamental concepts in finite model theory. Let $\sigma$
be a vocabulary. A \emph{class of $\sigma$-structures} is a collection $\C$ of $\sigma$-structures that is closed under isomorphisms. A \emph{$k$-ary query on}
$\C$ is a mapping $Q$ with domain $\C$ such that $Q$ is preserved under isomorphisms and $Q(\bA)$ is a $k$-ary relation on $\bA$ for all $\bA\in \C$.
A \emph{Boolean query on} $\C$ is a mapping $Q: \C\rightarrow \{ 0,1\}$ that is preserved under isomorphisms. Consequently, $Q$ can be identified with the subclass $\C'=\{ \bA\in \C: Q(\bA)=1\}$ of $\C$.
For example, the $\connectivity$ query on graphs $\bG=(V,E)$ is the Boolean query such that $\connectivity(\bG)=1$ if and only if the graph $\bG$ is connected. Queries are mathematical objects that formalize
the concept of a ``property'' of structures and makes it possible to define what means for such a ``property'' to be expressible in some logic.

Let $L$ be a (first-order) logic and $\C$ a class of $\sigma$-structures. A $k$-ary query $Q$ on $\C$ is \emph{$L$-definable} if there exists a formula $F(x_1,\ldots,x_k)$ of $L$ with $x_1,\ldots,x_k$
as free variables and such that for every $\bA\in \C$, $Q(\bA)=\{ (a_1,\ldots,a_k)\in A^k: \bA\vDash F(a_1,\ldots,a_k)\}$. A Boolean query $Q$ on $\C$ is \emph{$L$-definable} if there exists an $L$-formula $F$
such that for every $\bA\in \C$, $Q(\bA)=1$ if and only if $\bA\vDash F$. Let $L(\C)$ denotes the collection of all $L$-definable queries on $\C$.

The \emph{expressive power} of a logic $L$ on a class $\C$ of finite structures is defined by the collection of $L$-definable queries on $\C$, i.e., is to determine which queries on $\C$ are $L$-definable and which are not.
To show that a query $Q$ is definable, it suffices to find some $L$-formula that defines it on every structure in $\C$. In contrast, showing that $Q$ is not $L$-definable entails showing that no
formula of $L$ defines the property. One of the main tools in proving that a query is not definable in first-order logic of finite graphs is the method of Ehrenfeucht-Fra\"{i}ss\'e games, defined in the next subsection.

\subsection{Ehrenfeucht-Fra\"{i}ss\'e games}\label{EFg}  Let $r$ be a positive integer, $\sigma$ a vocabulary, and $\bA$ and $\bB$ two $\sigma$-structures. The \emph{$r$-move Ehrenfeucht-Fra\"{i}ss\'e game on} $\bA$ and $\bB$
is played between two players, called the \emph{Spoiler} and the \emph{Duplicator}. Each run of the game has $r$ moves. In each move, the Spoiler plays first and picks an element from the universe $A$ of $\bA$ or from the
universe $B$ of $\bB$; the Duplicator then responds by picking an element of the other structure (i.e., if Spoiler picked an element from $A$, then the Duplicator picks and element from $B$, and vice versa). Let
$a_i\in A$ and $b_i\in B$ be the two elements picked by the Spoiler and the Duplicator in their $i$-th move, $1\le i\le r$.
\begin{itemize}
\item The \emph{Duplicator wins the run} $(a_1,b_1),\ldots,(a_r,b_r)$ if the mapping $a_i\mapsto b_i, i=1,\ldots,r$ and $c_j^{\bA}\mapsto c_j^{\bB}, j=1,\ldots,s$ is a partial isomorphism from $\bA$ to $\bB$, which means that it is an isomorphism between the substructure $\bA\upharpoonright \{ a_1,\ldots,a_r\}$ of $\bA$ restricted to $\{ a_1,\ldots,a_r\}$ and the substructure $\bB\upharpoonright \{ b_1,\ldots,b_r\}$ of $\bB$ restricted to $\{ b_1,\ldots,b_r\}$. otherwise, the \emph{Spoiler wins the run} $(a_1,b_1),\ldots,(a_r,b_r)$.
\item The \emph{Duplicator wins the $r$-move Ehrenfeucht-Fra\"{i}ss\'e game on} $\bA$ and $\bB$ if the Duplicator can win every run of the game, i.e., if (s)he has a winning strategy for the Ehrenfeucht-Fra\"{i}ss\'e game. Otherwise,
the \emph{Spoiler wins the $r$-move Ehrenfeucht-Fra\"{i}ss\'e game}.
\item We write $\bA \sim_r B$ to denote that the Duplicator wins the $r$-move Ehrenfeucht-Fra\"{i}ss\'e game  on $\bA$ and $\bB$.
\end{itemize}
From this definition follows that $\sim_r$ is an equivalence relation on the class of all $\sigma$-structures. For a formal definition of the winning strategy for the Duplicator, see for example~\cite[Definition 3.4]{Kol}.
Ehrenfeucht-Fra\"{i}ss\'e games characterize definability in first-order logic. To describe this connection, we need the following definition.

Let $F$ be a first-order formula over a vocabulary $\sigma$. The \emph{quantifier rank} of $F$, denoted by $\qr(F)$, is defined inductively in the following way:
\begin{itemize}
\item if $F$ is atomic, then $\qr(F)=0$;
\item if $F$ is of the form $\neg F'$, then $qr(F)=\qr(F')$;
\item if $F$ is of the form $F'\vee F''$ or of the form $F'\wedge F''$, then $\qr(F)=\max\{ \qr(F'),\qr(F'')\}$;
\item if $F$ is of the form $\exists x F'$ or of the form $\forall x F'$, then $\qr(F)=\qr(F')+1$.
\end{itemize}

For a positive integer $r$ and two $\sigma$-structures $\bA$ and $\bB$, $A\equiv_r \bB$ denotes that $\bA$ and $\bB$ satisfy the same first-order sentences of quantifier rank $r$;
$\equiv_r$ is an equivalence relation on the class of all $\sigma$-structures. The main result of Ehrenfeucht and Fra\"{i}ss\'e asserts that the equivalence relations  $\equiv_r$ and $\sim_r$ coincide:

\begin{theorem}[\!\cite{Ehr,Fra}]\label{EhrFra}  Let $r$ be a positive integer and let $\bA$ and $\bB$ be two $\sigma$-structures. Then the following two conditions are equivalent:
\begin{itemize}
\item[(i)] $\bA\equiv_r \bB$, i.e., $\bA$ and $\bB$ have the same first-order sentences of quantifier rank $r$;
\item[(ii)] $\bA\sim_r \bB$, i.e., the Duplicator wins the $r$-move Ehrenfeucht-Fra\"{i}ss\'e game on $\bA$ and $\bB$.
\end{itemize}
Moreover, $\equiv_r$ has finitely many equivalence classes and each $\equiv_r$-equivalence class is definable by a first-order sentence of quantified rank $r$.
\end{theorem}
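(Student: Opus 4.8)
The plan is to prove the equivalence of (i) and (ii) together with the ``moreover'' clause by introducing, for every $\sigma$-structure $\bA$, every integer $r\ge 0$, and every tuple $\bar a=(a_1,\ldots,a_n)$ of elements of $A$, a formula $\varphi^r_{\bA,\bar a}(x_1,\ldots,x_n)$ of quantifier rank $r$ — the \emph{rank-$r$ Hintikka formula} of $\bar a$ in $\bA$ — that records exactly the information about $\bar a$ detectable in $r$ moves of the game. These are defined by induction on $r$: $\varphi^0_{\bA,\bar a}$ is the conjunction of all atomic and negated atomic formulas in $x_1,\ldots,x_n$ (and the constant symbols) satisfied by $\bar a$ in $\bA$, and for the inductive step one sets
\[
\varphi^{r+1}_{\bA,\bar a}(\bar x)=\bigwedge_{b\in A}\exists x_{n+1}\,\varphi^{r}_{\bA,\bar a b}(\bar x,x_{n+1})\ \wedge\ \forall x_{n+1}\bigvee_{b\in A}\varphi^{r}_{\bA,\bar a b}(\bar x,x_{n+1}).
\]

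First I would establish the crucial finiteness fact, by induction on $r$: up to logical equivalence there are only finitely many formulas of quantifier rank at most $r$ whose free variables lie among a fixed finite set $x_1,\ldots,x_n$. The base case uses that $\sigma$ is finite, so there are finitely many atomic formulas and hence finitely many Boolean combinations up to equivalence; the inductive step notes that a rank-$(r+1)$ formula is a Boolean combination of atomic formulas and formulas $\exists x_{n+1}\psi$, $\forall x_{n+1}\psi$ with $\psi$ of rank $r$, of which there are finitely many by hypothesis. Consequently the conjunctions and disjunctions ``over $b\in A$'' in the display range over only finitely many pairwise inequivalent formulas, so each $\varphi^r_{\bA,\bar a}$ is a genuine finite first-order formula of quantifier rank exactly $r$. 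I would also record the immediate reflexivity fact that every structure satisfies its own Hintikka formula, i.e.\ $\bA\vDash\varphi^r_{\bA,\bar a}(\bar a)$.

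The heart of the argument is the game-characterization lemma, proved by induction on $r$: for tuples $\bar a$ in $\bA$ and $\bar b$ in $\bB$ of equal length, $\bB\vDash\varphi^r_{\bA,\bar a}(\bar b)$ if and only if the Duplicator wins the $r$-move Ehrenfeucht--Fra\"{i}ss\'e game on $\bA$ and $\bB$ played from the initial position $(\bar a,\bar b)$. In the base case $r=0$ this is precisely the statement that $a_i\mapsto b_i$ together with $c_j^{\bA}\mapsto c_j^{\bB}$ is a partial isomorphism, which is the winning condition of the $0$-move game. For the inductive step, the first conjunct $\bigwedge_{b}\exists x_{n+1}\varphi^r_{\bA,\bar a b}$ corresponds to Spoiler's moves played in $\bA$ and answered by the Duplicator in $\bB$, whereas the second conjunct $\forall x_{n+1}\bigvee_{b}\varphi^r_{\bA,\bar a b}$ corresponds to Spoiler's moves played in $\bB$ and answered in $\bA$; matching these two halves against the induction hypothesis for the remaining $r$-move game is where the proof does its real work. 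I expect this translation between the back-and-forth structure of the game and the $\exists/\forall$ structure of the Hintikka formula to be the main obstacle.

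With these tools the theorem follows quickly. For $(i)\Rightarrow(ii)$, by reflexivity $\bA\vDash\varphi^r_{\bA}$, where $\varphi^r_{\bA}$ is the Hintikka sentence obtained from the empty tuple ($n=0$); since it has quantifier rank $r$, the hypothesis $\bA\equiv_r\bB$ gives $\bB\vDash\varphi^r_{\bA}$, and the lemma then yields $\bA\sim_r\bB$. For $(ii)\Rightarrow(i)$ I would first derive a completeness statement from the finiteness fact and reflexivity: every sentence $\psi$ of quantifier rank at most $r$ is logically equivalent to the finite disjunction of the Hintikka sentences $\varphi^r_{\bA'}$ taken over those $\bA'$ with $\bA'\vDash\psi$. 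Then $\bA\sim_r\bB$ forces $\bB\vDash\varphi^r_{\bA}$ by the lemma, and the completeness statement propagates satisfaction of every rank-$\le r$ sentence from $\bA$ to $\bB$ and back, so $\bA\equiv_r\bB$. Finally, the ``moreover'' clause is immediate: by the finiteness fact there are only finitely many Hintikka sentences $\varphi^r_{\bA}$ up to equivalence, each $\equiv_r$-class is exactly the class of models of one such sentence and is hence definable by a first-order sentence of quantifier rank $r$, and therefore $\equiv_r$ has finitely many classes.
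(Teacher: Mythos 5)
The paper does not prove this theorem: it is stated as the classical Ehrenfeucht--Fra\"{i}ss\'e result with citations to Ehrenfeucht and Fra\"{i}ss\'e, and the reader is referred to Kolaitis's survey for a proof. Your proposal is exactly the standard Hintikka-formula (rank-$r$ type) argument given in those references --- finiteness of formulas of bounded quantifier rank up to equivalence, the game-characterization lemma for $\varphi^r_{\bA,\bar a}$ proved by induction on $r$, and the two directions plus the ``moreover'' clause read off from it --- so it is correct and matches the intended proof. The only soft spot is your claim that the completeness statement (every rank-$\le r$ sentence is equivalent to a disjunction of Hintikka sentences) follows ``from the finiteness fact and reflexivity'': it additionally requires showing that each $\varphi^r_{\bA'}$ decides every sentence of quantifier rank at most $r$, which is a separate induction (on the sentence, or equivalently a direct back-and-forth argument for the implication $\bA\sim_r\bB\Rightarrow\bA\equiv_r\bB$); this is routine but should be stated as its own step rather than treated as immediate.
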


For a proof of this theorem, see~\cite{Kol}. A consequence of this theorem is the following result:

\begin{theorem}\label{EhrFra_bis} Let $\C$ be a class of $\sigma$-structures and $Q$ be a Boolean query on $\C$. Then the following statements are equivalent:
\begin{itemize}
\item[(a)] $Q$ is first-order definable on $\C$;
\item[(b)] there exists a positive integer $r$ such that, for every $\bA,\bB\in \C$, if $Q(\bA)=1$ and the Duplicator wins
the $r$-move Ehrenfeucht-Fra\"{i}ss\'e game on $\bA$ and $\bB$, then $Q(\bB)=1$.
\end{itemize}
\end{theorem}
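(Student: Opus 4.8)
The plan is to derive this statement directly from Theorem~\ref{EhrFra}, exploiting both its main equivalence (that $\equiv_r$ coincides with $\sim_r$) and its ``Moreover'' clause. The two implications are of very different character: $(a)\Rightarrow(b)$ is essentially a one-line consequence of the main equivalence, whereas $(b)\Rightarrow(a)$ is where all the work lies and where the finiteness assertion becomes indispensable.

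For $(a)\Rightarrow(b)$, I would assume $Q$ is defined on $\C$ by a first-order sentence $\varphi$ and set $r=\qr(\varphi)$. Suppose $\bA,\bB\in\C$ satisfy $Q(\bA)=1$ and the Duplicator wins the $r$-move game on $\bA$ and $\bB$, i.e. $\bA\sim_r\bB$. By Theorem~\ref{EhrFra} this gives $\bA\equiv_r\bB$, so $\bA$ and $\bB$ agree on every sentence of quantifier rank $r$, in particular on $\varphi$. Since $Q(\bA)=1$ forces $\bA\vDash\varphi$, we get $\bB\vDash\varphi$, hence $Q(\bB)=1$. This witnesses $(b)$ with the constant $r$.

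For $(b)\Rightarrow(a)$, I would fix the integer $r$ provided by $(b)$ and invoke the ``Moreover'' part of Theorem~\ref{EhrFra}: the relation $\equiv_r$ (equivalently $\sim_r$) has only finitely many classes $C_1,\dots,C_N$ on the class of all $\sigma$-structures, and each $C_i$ is defined by a first-order sentence $\psi_i$ of quantifier rank $r$. Let $I$ be the set of indices $i$ such that $C_i$ meets $Q^{-1}(1)\cap\C$, i.e. contains at least one $\bA\in\C$ with $Q(\bA)=1$, and put $\varphi:=\bigvee_{i\in I}\psi_i$, which is a genuine first-order sentence precisely because $I$ is finite. I claim $\varphi$ defines $Q$ on $\C$. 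If $\bB\in\C$ has $Q(\bB)=1$, then $\bB$ lies in its own $\equiv_r$-class $C_i$, which by construction belongs to $I$, so $\bB\vDash\psi_i$ and thus $\bB\vDash\varphi$. Conversely, if $\bB\in\C$ satisfies $\bB\vDash\varphi$, then $\bB\vDash\psi_i$ for some $i\in I$, so $\bB\in C_i$; choosing a witness $\bA\in\C\cap C_i$ with $Q(\bA)=1$, we have $\bA\equiv_r\bB$ and hence $\bA\sim_r\bB$ by Theorem~\ref{EhrFra}, i.e. the Duplicator wins the $r$-move game on $\bA$ and $\bB$. Condition $(b)$ then yields $Q(\bB)=1$, establishing $\bB\vDash\varphi\Leftrightarrow Q(\bB)=1$.

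The decisive step, and the only nontrivial one, is the appeal to the ``Moreover'' clause of Theorem~\ref{EhrFra}: finiteness of the number of $\equiv_r$-classes is exactly what makes the disjunction $\bigvee_{i\in I}\psi_i$ a finite (hence first-order) formula, and their uniform definability by rank-$r$ sentences is what makes $\varphi$ available at all. I would also record two small honesty checks: that $\sim_r$ is symmetric (noted in the excerpt as an equivalence relation), so the order of $\bA,\bB$ in the game appearing in $(b)$ is immaterial; and that although the classes $C_i$ range over all $\sigma$-structures, the index set $I$ is selected using only members of $\C$, so that $\varphi$ correctly tracks $Q$ on $\C$ even if the $C_i$ contain structures outside $\C$.
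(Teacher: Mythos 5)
Your proof is correct and is exactly the standard derivation the paper has in mind: the paper states Theorem~\ref{EhrFra_bis} without proof as ``a consequence'' of Theorem~\ref{EhrFra}, and your argument --- using the main equivalence for $(a)\Rightarrow(b)$ and the ``Moreover'' clause (finitely many $\equiv_r$-classes, each definable by a rank-$r$ sentence) to build the finite disjunction for $(b)\Rightarrow(a)$ --- is precisely how that consequence is obtained. Your two ``honesty checks'' (symmetry of $\sim_r$ and the fact that the classes $C_i$ may contain structures outside $\C$) are both apt and correctly handled.
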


This theorem provides the following method for studying first-order definability of Boolean queries on classes of $\sigma$-structures. Let $\C$ be a $\sigma$-structure and $Q$ be a Boolean query on $\C$.
To show that $Q$ is not first-order definable on $\C$, it suffices to show that for every positive integer $r$ there are $\bA_r,\bB_r\in \C$ such that
\begin{itemize}
\item $Q(\bA_r)=1$ and $Q(\bB_r)=0$;
\item the Duplicator wins the $r$-move Ehrenfeucht-Fra\"{i}ss\'e game on $\bA$ and $\bB$.
\end{itemize}
The method is also \emph{complete}, i.e., if $Q$ is not first-order definable on $\C$, then for every positive integer $r$ such structures $\bA_r$ and $\bB_r$ exist.

\subsection{What can be expressed and what cannot be expressed in FOL for graphs}\label{sec-folG}
Recall that an \emph{undirected graph} is a $\sigma$-structure $\bG=(V,E)$ with the universe
$V$ and the vocabulary $\sigma$ with one binary relation symbol $\E$ (interpreted as the edge relation) such that $\E(u,v)\Rightarrow \E(v,u)$ and $(\forall u) (\neg \E(u,u))$.
We start with a few queries on graphs, which are first-order definable:

\begin{example} Let $H=(V',E')$ be a graph with vertex-set $V'=\{ 1,\ldots,p\}$. The Boolean query ${\subgraph}_H$ meaning ``$\bG$ contains $H$ as an induced subgraph''
is definable by the first-order formula
\[
{\subgraph}_H \equiv (\exists v_1)(\exists v_2)\cdots (\exists v_p)(\bigwedge_{ij\in E'} \E(v_i,v_j) \wedge \bigwedge_{ij\notin E'} \neg \E(v_i,v_j)).
\]
This implies that the query $\subgraph_{H_1,\ldots,H_p}$  meaning ``$\bG$  contains at least one of the graphs $H_1,\ldots,H_p$ as an induced subgraph'' is also first-order definable.

Analogously, the binary query ``there exists a path of length $k$ from $x$ to $y$'' is definable by the first order formula
\[\varphi_k(x,y):=(\exists v_1)(\exists v_2)\cdots (\exists
  v_{k-1})(\E(x,v_1)\wedge \E(v_1,v_2)\wedge\cdots\wedge
  \E(v_{k-2},v_{k-1})\wedge \E(v_{k-1},y)).\]
Using the formulas $\varphi_k$, one can show that the binary query
$\dist_{\leq k}(x,y)$ meaning that ``the distance between $x$ and $y$
is at most $k$''  is definable by the first order formula
\[{\dist}_{\le k}(x,y) \equiv \varphi_k(x,y) \vee \varphi_{k-1}(x,y))
  \vee \cdots \vee \varphi_1(x,y) \vee (x = y). \]
The binary query $\dist_{k}(x,y)$ meaning that ``the distance between
$x$ and $y$ is at most $k$''  can then be defined as the first order formula
\[
  {\dist}_{k}(x,y)  \equiv {\dist}_{\leq k}(x,y) \wedge \neg {\dist}_{\leq
                     k -1} (x,y).
\]

Using the last queries, one can easily show that the Boolean queries $\isometric_H$ and $\isometric_{H_1,\ldots, H_p}$ meaning ``$\bG$ contains $H$ as an isometric subgraph'' and
``$\bG$ contains at least one  of the graphs $H_1,\ldots,H_p$ as an isometric subgraph'' are also first-order definable. For example,  if $H$ is a graph with the  vertex-set $\{ 1,\ldots,p\}$, then
$\isometric_H$ is definable by the formula
\[
{\isometric}_H\equiv (\exists v_1)(\exists v_2)\cdots (\exists v_p)\left(\bigwedge_{k=1}^p \left(\bigwedge_{\{i,j\}: d_H(i,j)=k} {\dist}_{k}(v_i,v_j)\right)\right).
\]
\end{example}

On the other hand, the most queries on graphs are not first-order definable, in particular the following well-known queries:
\begin{itemize}
\item The $\acyclicity$ query is the Boolean query such that $\acyclicity(\bG)=1$ iff $\bG$ is an acyclic graph;
\item The $\bipartite$ query is the Boolean query such that $\bipartite(\bG)=1$ iff $\bG$ is a bipartite graph;
\item The  $\connectivity$ query is the Boolean query such that $\connectivity(\bG)=1$ iff $G$ is a connected graph;
\item The $\even$ query  is the Boolean query such that $\even(\bG)=1$ iff $\bG$ has an even number of vertices.
\end{itemize}
All these results can be obtained via Ehrenfeucht-Fra\"{i}ss\'e games~\cite{Kol,Lib}.

\section{First Order Logic with Betweenness (FOLB)}\label{sec:FOLB}
In this section, we introduce the first-order logic with betweenness.
Betweenness was first formulated in geometry and nowadays has a rich history. Euclid, Pasch, Hilbert, Peano, and Tarski studied
betweeness in Euclidean geometry axiomatically. Menger~\cite{Me} and Blumenthal~\cite{Blu} investigated \emph{metric betweeness}, i.e., betweenness in general
metric spaces. Inspired by the work of Pasch,  Pitcher and Smiley~\cite{PiSm} and Sholander~\cite{Sh1,Sh2,Sh3} were the first to investigate betweenness
in the discrete setting: in lattices, partial orders, trees, and median semilattices. In graphs, the study of metric betweenness was initiated
by Mulder~\cite{Mu}. Prenowitz and Jantosciak~\cite{PrJa} were the first to investigate
the notion of betweenness in the setting of abstract convexity by introducing the concept of \emph{join space}. Hedlikov\'a represented the
betweenness relation as a ternary relation and introduced the concept of \emph{ternary space}; the betweenness relation in a ternary space
unifies the metric, order and lattice betweenness. Finally, this led
to the equivalent concept of \emph{geometric interval space}~\cite{VdV}.

\subsection{Betweenness and interval spaces}

Let $X$ be any finite set. For each pair $u,v$ of points in $X$, let $uv$ be a subset of $X$, called the \emph{interval} between $u$ and $v$.
Then $X$ is a \emph{(finite) interval space}~\cite{VdV} if and only if
\begin{enumerate}
\item[(I1)] $u\in uv$;
\item[(I2)] $uv=vu$;
\end{enumerate}
Every interval space gives rise to a betweenness relation: we will say that a point $x$ is \emph{between the points} $u$ and $v$ (notation $uxv$)
if $x\in uv$.
The interval space $X$ is said to be \emph{geometric}
if it satisfies the following three conditions for all $u,v,w,x\in X$~\cite{BaVdVVe,Ve}:
\begin{enumerate}
\item[(I3)] $uu=\{ u\}$,
\item[(I4)] $w\in uv$ implies $uw\subseteq uv$,
\item[(I5)] $v,w\in ux$ and $v\in uw$ implies $w\in vx$.
\end{enumerate}
A particular instance of geometric interval space is any metric space $(X,d)$: the intervals are the metric intervals $uv=\{ x\in X: d(u,x)+d(x,v)=d(u,v)\}$.

For each point $u$ one defines the \emph{base-point relation at $u$} as follows: $x\le_u y$ if and only if $x\in uy$. The next lemma summarizes an equivalent description of
geometric interval spaces~\cite[Section 27]{VdV}:

\begin{lemma}\label{geometric-interval-space} An interval space $X$ is geometric if and only if it satisfies the following conditions:
\begin{enumerate}
\item[(a)] $w\in ux$ and $x\in uw$ implies $w=x$;
\item[(b)] $v\in uw$ and $w\in ux$ implies $v\in ux$ and $w\in vx$;
\item[(c)] for each point $u$ the base-point relation $\le_u$ is a partial order such that for any $v\le_u w$ we have $vw=\{ x: v\le_u x\le_u w\}$.
\end{enumerate}
\end{lemma}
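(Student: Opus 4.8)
The plan is to prove both implications by translating every axiom into a statement about the base-point relations $\le_u$, where $x \le_u y \iff x \in uy$. The key observation that drives everything is the following dictionary: reflexivity of each $\le_u$ holds in any interval space (from (I1) and (I2), since both $u$ and $y$ lie in $uy$); axiom (I4) is literally transitivity of $\le_u$ (it reads $y \le_u w \le_u v \Rightarrow y \le_u v$); condition (a) is exactly antisymmetry of $\le_u$; and the interval formula in (c) is precisely the content of (I5) read in both directions. Once this dictionary is set up, most of the work is routine bookkeeping, and the only genuinely delicate point is one inclusion of the interval formula.

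For the direction ``geometric $\Rightarrow$ (a),(b),(c)'', I would first derive antisymmetry (a): assuming $w \in ux$ and $x \in uw$, instantiate (I5) with $(u,v,w,x):=(u,x,w,x)$, so that its three hypotheses read $x\in ux$ (reflexivity), $w\in ux$ (given), $x\in uw$ (given); the conclusion lands $w$ in $xx$, which equals $\{x\}$ by (I3), forcing $w=x$. Transitivity is just (I4). Condition (b) then follows at once: its first clause $v\in ux$ is transitivity, and its second clause $w\in vx$ is the instance $(u,v,w,x):=(u,v,w,x)$ of (I5). Finally, for (c) I collect reflexivity, (a), and (I4) to get a partial order, and prove the interval formula $vw=\{x:v\le_u x\le_u w\}$ by two inclusions.

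The inclusion $\supseteq$ is immediate: given $v\le_u x\le_u w$ (hence $v,x\in uw$ and $v\in ux$), the instance $(u,v,w,x):=(u,v,x,w)$ of (I5) yields $x\in vw$. The inclusion $\subseteq$, i.e. deducing $v\le_u x$ and $x\le_u w$ from merely $x\in vw$ together with $v\le_u w$, is the main obstacle, because (I5) based at $u$ only runs in the wrong direction and $\le_u$ need not be total. The resolution is to switch the base point to $w$: from $x\in vw=wv$ and $v\in uw=wu$ (using (I2)) we obtain $x\le_w v\le_w u$, so transitivity of $\le_w$ gives $x\in wu=uw$, that is $x\le_u w$; and instantiating (I5) this time with $(u,v,w,x):=(w,x,v,u)$ on the same chain produces $v\in xu=ux$, that is $v\le_u x$. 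This finishes the interval formula and hence (c).

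For the converse ``(a),(b),(c) $\Rightarrow$ geometric'', everything is short. Axiom (I3) follows because $x\in uu$ means $x\le_u u$, while $u\le_u x$ holds by (I1), so antisymmetry gives $x=u$. Axiom (I4) is transitivity of the partial order $\le_u$ guaranteed by (c). Axiom (I5) is read straight off the interval formula: under its hypotheses $v\le_u x$, $w\le_u x$, $v\le_u w$, applying (c) to the pair $v\le_u x$ gives $vx=\{y:v\le_u y\le_u x\}$, and since $v\le_u w\le_u x$ we conclude $w\in vx$. I would also flag that the listed conditions are mildly redundant (antisymmetry sits in both (a) and the partial order of (c), and (b) overlaps with (I5)); this is harmless, and I would simply invoke whichever form is most convenient at each step.
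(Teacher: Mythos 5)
The paper gives no proof of this lemma at all --- it is quoted from van de Vel's book (\cite[Section 27]{VdV}) --- so there is nothing in-text to compare your argument against; what you have written is a correct, self-contained derivation. Your dictionary is accurate (reflexivity of $\le_u$ from (I1)--(I2), transitivity $=$ (I4), antisymmetry $=$ (a), the interval formula $=$ (I5) read both ways), and the two nontrivial steps check out. The instantiation $(u,v,w,x):=(u,x,w,x)$ of (I5) for antisymmetry is legitimate: its hypotheses $x\in ux$, $w\in ux$, $x\in uw$ all hold and the conclusion $w\in xx=\{x\}$ invokes (I3) correctly. For the delicate inclusion $vw\subseteq\{x: v\le_u x\le_u w\}$, the base-point switch to $w$ works: $x\in wv$ and $v\in wu$ give $x\in wu$ by (I4) at base $w$, and then (I5) instantiated as $(w,x,v,u)$ has hypotheses $x,v\in wu$ and $x\in wv$, all now available, with conclusion $v\in xu=ux$. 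The converse direction and your observation that the listed conditions are redundant (in particular (b) follows from (c)) are also correct. The only cosmetic remark is that in the forward direction you could obtain (b) directly as ``(I4) plus one instance of (I5)'' before developing (c), which is in fact what you do; as written the proof is complete.
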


Let $X$ be any set  together with a ternary relation $uxv$. If $u,w,v\in X$ and $uwv$, then $w$ is said to be \emph{between} $u$ and $v$.
The \emph{interval} $uv$ is defined as the set of all $w\in X$ between $u$ and $v$. A \emph{ternary space} (which can be equally called a \emph{space with betweenness}) is a set $X$  together
with a ternary relation $uxv$ satisfying the following conditions~\cite{He}:
\begin{enumerate}
\item[(B1)] $uwv$ implies $vwu$;
\item[(B2)] $uwv$ and $uvw$ implies $v=w$;
\item[(B3)] $uwv$ and $uvx$ implies $wvx$ and $uwx$.
\end{enumerate}

From Lemma \ref{geometric-interval-space} it follows that a geometric interval space is exactly a ternary space satisfying the property that $u\in uv$ (i.e., $uuv$) for all $u,v\in X$.

An interval $uv$ of an interval space $X$ is called an \emph{edge} if $u\ne v$ and $uv=\{ u,v\}$; the edges then form the \emph{graph} $G(X)$ of the interval space $X$.

\begin{lemma}[\!\cite{BaCh_helly}]\label{discrete}  Let $X$ be a finite geometric interval space. Then the graph $G(X)$ of $X$ is connected.
\end{lemma}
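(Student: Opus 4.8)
The plan is to fix a base point $u \in X$ and show that every vertex $v$ is joined to $u$ by a path in $G(X)$; since connectivity is the statement that all vertices lie in one component, this suffices. I would work not with the axioms (I1)--(I5) directly but with the equivalent description from Lemma \ref{geometric-interval-space}: the base-point relation $x \le_u y \Leftrightarrow x \in uy$ is, by (c), a partial order on $X$, and moreover $ab = \{x : a \le_u x \le_u b\}$ whenever $a \le_u b$. The key preliminary observations are that $u$ is the least element of $\le_u$ (because $u \in uy$ for all $y$ by (I1), so $u \le_u y$ always) and that, consequently, $u$ is the unique minimal element of the finite poset $(X,\le_u)$.

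The core step is a local claim: for every $v \neq u$ there is a vertex $w$ with $w \le_u v$, $w \neq v$, and $wv$ an edge of $G(X)$. To prove it I would let $w$ be a maximal element, with respect to $\le_u$, of the finite nonempty set $uv \setminus \{v\}$; it is nonempty because $u \in uv$ by (I1) and $u \neq v$. From $w \in uv$ we get $w \le_u v$, and $w \neq v$, so $w$ lies strictly below $v$. It then remains to verify $wv = \{w,v\}$. The inclusion $\{w,v\} \subseteq wv$ is immediate from (I1)--(I2). For the reverse inclusion, take any $x \in wv$: applying (c) to $w \le_u v$ gives $w \le_u x \le_u v$, and since $u \le_u w$, transitivity of $\le_u$ yields $u \le_u x \le_u v$, so $x \in uv$ again by (c). If $x \neq v$, then $x \in uv \setminus \{v\}$ with $w \le_u x$, and maximality of $w$ forces $x = w$. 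Hence every element of $wv$ equals $w$ or $v$, so $wv$ is an edge.

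With the local claim established, connectivity follows by Noetherian (well-founded) induction on the finite poset $(X,\le_u)$. The base case is the unique minimal element $u$, which is trivially joined to itself; for $v \neq u$, the local claim produces a neighbor $w$ of $v$ with $w \le_u v$ and $w \neq v$, so $w$ is strictly below $v$, and by the induction hypothesis $w$ is already joined to $u$ by a path. Appending the edge $wv$ joins $v$ to $u$, which closes the induction and proves that $G(X)$ is connected.

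I expect the main obstacle to be precisely the local claim -- not that a maximal element exists (finiteness handles that), but that this maximal element is genuinely adjacent to $v$ rather than merely comparable to it. The subtle point is ruling out a third vertex $x \in wv \setminus \{w,v\}$, and this is exactly where the order-convexity of intervals in (c) is indispensable: it lets me convert the betweenness statement $x \in wv$ into the inequalities $w \le_u x \le_u v$, then relocate $x$ into $uv \setminus \{v\}$ and contradict maximality. Attempting this directly from (I1)--(I5) without passing through the partial-order reformulation of Lemma \ref{geometric-interval-space} would be considerably more delicate, so leaning on that lemma is what keeps the argument clean.
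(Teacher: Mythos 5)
Your argument is correct: fixing $u$, taking a $\le_u$-maximal element $w$ of $uv\setminus\{v\}$, and using the order-convexity of intervals from Lemma \ref{geometric-interval-space}(c) to show $wv=\{w,v\}$ is exactly the standard way to connect every $v$ to $u$ by induction on the base-point order. The paper itself gives no proof of this lemma (it is quoted from \cite{BaCh_helly}), but your proof is complete and matches the argument used there, so there is nothing to object to.
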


The graph $G(X)$ of a finite geometric interval space $X$ can be regarded as a metric space, where the standard graph-metric $d$ accounts the lengths of shortest paths in the graph. We denote by
$I(u,v)=\{ x\in X: x \mbox{ on a shortest path between } u \mbox{ and } v\}$ the corresponding intervals in $G(X)$ which have to be distinguished from the intervals $uv$ in $X$. An interval space $X$ is called
\emph{graphic}~\cite{BaCh_helly,VdV} if the equality $uv=I(u,v)$ holds for all points $u,v$ of the space.

A simple sufficient condition for a finite interval space to be graphic was given in~\cite{BaCh_helly}. An interval space $X$ is said to satisfy the \emph{triangle condition} if for any three points $u,v,w$ in $X$ with

\begin{enumerate}
\item[(ITC)] $uv\cap uw=\{ u\}, uv\cap vw=\{ v\},$ and $uw\cap vw=\{ w\}$, the intervals $uv,uw,vw$ are edges whenever at least one of them is an edge.
\end{enumerate}

\begin{theorem}[\!\cite{BaCh_helly}]\label{graphic-triangle-condition}  A finite geometric interval space $X$ satisfying the axiom (ITC) is graphic.
\end{theorem}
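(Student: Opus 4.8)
The plan is to translate the statement into the language of the base-point partial orders $\le_u$ supplied by Lemma~\ref{geometric-interval-space} and then to show that graph distance in $G(X)$ is exactly the rank function of these orders. Recall that $x\le_u y$ means $x\in uy$; since $u\in ux$ for all $x$ by (I1), $u$ is the least element of $\le_u$, and by part~(c) of Lemma~\ref{geometric-interval-space} one has $uv=\{x:x\le_u v\}$, the principal ideal of $v$. The first thing I would record is that \emph{covers are edges}: if $a\lessdot_u b$ is a covering pair, then $ab=\{x:a\le_u x\le_u b\}=\{a,b\}$ by Lemma~\ref{geometric-interval-space}(c), so $ab$ is an edge of $G(X)$. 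Consequently every saturated $\le_u$-chain from $u$ to $v$ is a path of $G(X)$; such chains exist (pick a minimal element of $uv\setminus\{u\}$, whose interval with $u$ is forced by (I4) to be an edge, and iterate), so together with Lemma~\ref{discrete} this makes $d(u,v)$ finite and gives $d(u,v)\le$ the length of any saturated chain in the $\le_u$-interval $[u,v]$.

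The theorem then reduces to the two-sided local fact $(\diamond)$: \emph{for a neighbour $y$ of $v$ one has $d(u,y)=d(u,v)-1$ if and only if $y\in uv$} (equivalently $y\lessdot_u v$). Granting $(\diamond)$, the identity $d(u,v)=\lambda(u,v)$, where $\lambda(u,v)$ is the common length of saturated $u$--$v$ chains, follows by an easy induction on $|uv|$, since the forward direction of $(\diamond)$ forces each cover step to strictly increase the distance to the base point. Both inclusions then drop out: for $w\in uv$ I would splice a saturated $u$--$w$ chain and a saturated $w$--$v$ chain into a saturated $u$--$v$ chain of length $\lambda(u,v)=d(u,v)$, while the same walk through $w$ yields $d(u,w)+d(w,v)=d(u,v)$, i.e.\ $uv\subseteq I(u,v)$; for $w\in I(u,v)$ I would induct on $d(w,v)$, pushing $w$ one step along a shortest path towards $v$ to a vertex $w^{+}\in I(u,v)$ with $w^{+}\le_u v$ (by the induction hypothesis on the smaller interval $uw^{+}$), and then applying the backward direction of $(\diamond)$ to the pair $(u,w^{+})$ to conclude $w\le_u w^{+}\le_u v$, i.e.\ $w\in uv$. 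Thus everything is reduced to proving $(\diamond)$ for the pair $(u,v)$ itself, assuming inductively that all strictly smaller intervals are already genuine metric intervals.

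The main obstacle is exactly $(\diamond)$, and this is where the triangle condition~(ITC) must enter: the geometric axioms alone give only $|d(u,y)-d(u,v)|\le 1$ for a neighbour $y$ of $v$, not the strict and symmetric control that $(\diamond)$ asserts. I would argue both directions by contradiction. For the forward direction, assume a cover $y\lessdot_u v$ with $d(u,y)\ge d(u,v)$; for the backward direction, assume a distance-decreasing neighbour $y$ of $v$ with $y\notin uv$. In either case I would compare $y$ and the edge incident to $v$ with a vertex lying on a shortest $u$--$v$ path and extract three points forming a metric triangle in which one side is an edge and another is not. Since the induction hypothesis identifies all the smaller pairwise intervals occurring here with metric intervals, abstract betweenness and graph betweenness can be used interchangeably, so (ITC) applies and forces that triangle to be a full triangle of $G(X)$; this produces either a forbidden shortcut between $u$ and $v$ or a common neighbour contradicting the choice of $y$ as (not) a cover of $v$. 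I expect the genuinely delicate point to be organizing the induction so that every application of (ITC) is made to a triple whose three pairwise intervals are already known to be metric intervals, so that the axiom, which is stated for the abstract intervals $uv$, can legitimately be invoked inside the graph-metric triangle argument.
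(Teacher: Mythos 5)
The paper does not actually prove this theorem: it is quoted verbatim from~\cite{BaCh_helly} and used as a black box, so there is no in-paper argument to measure you against. Judged on its own terms, your reduction is sound and well organized: covers of the base-point orders $\le_u$ are edges by Lemma~\ref{geometric-interval-space}(c), saturated chains are paths, and granting the local statement $(\diamond)$ the two inclusions $uv\subseteq I(u,v)$ and $I(u,v)\subseteq uv$ do follow by the splicing and path-pushing arguments you describe (the splicing step needs the small observation, via Lemma~\ref{geometric-interval-space}(b), that covers inside the subinterval $[w,v]_{\le_u}$ are covers globally, but that is routine). This part is fine.

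The genuine gap is that $(\diamond)$ itself -- the only place where (ITC) enters, and hence the entire content of the theorem -- is never proved. You say you would ``extract three points forming a metric triangle in which one side is an edge and another is not,'' but you never specify the triple, never verify the three pairwise-intersection conditions that (ITC) requires (the standard device is the quasi-median construction: take a maximal element of $uy\cap uv$ with respect to $\le_u$, then iterate at the other two corners, checking the disjointness conditions with (I4)--(I5)), and never derive the concluding contradiction. Worse, the justification you lean on -- ``the induction hypothesis identifies all the smaller pairwise intervals occurring here with metric intervals'' -- is not available for the intervals that actually arise: in the backward direction of $(\diamond)$ the offending neighbour $y$ of $v$ satisfies $y\notin uv$, so there is no a priori comparison between $|uy|$ and $|uv|$, and your induction on $|uv|$ does not reach the pair $(u,y)$. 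You flag this yourself as ``the genuinely delicate point,'' which is an accurate self-diagnosis: without an explicit construction of the triangle and a correctly scoped induction (the known argument is careful about exactly this), the proposal is a plausible plan rather than a proof. Note also that (ITC) is stated for the abstract intervals, so its hypotheses can be checked purely order-theoretically without invoking the metric identification; making that observation explicit would likely be the way to rescue the induction.
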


Graphic interval spaces have been characterized by Mulder and Nebesk\'{y}~\cite{MuNe} (improving over the previous such characterizations obtained by Nebesk\'{y}). Additionally,
to axioms (I1)-(I5) of a geometric interval space, they require two
additional axioms introduced in~\cite{Ne}:

\begin{enumerate}
\item[(I6)] $uu'=\{ u,u'\}, vv'=\{ v,v'\}, u\in u'v'$, and $u',v'\in uv$ imply $v\in u'v'$;
\item[(I7)] $uu'=\{ u,u'\}, vv'=\{ v,v'\}, u'\in uv, v'\notin uv$, and $v\notin u'v'$ imply $u'\in uv'$.
\end{enumerate}

\begin{theorem}[\!\cite{MuNe}]\label{graphic-Mulder-Nebesky}  A finite geometric interval space $X$ is graphic if and only if it satisfies the axioms (I6) and (I7).
\end{theorem}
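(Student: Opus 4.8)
The plan is to prove the two implications separately. Throughout I use that $uu'=\{u,u'\}$ is exactly the statement that $u$ and $u'$ are adjacent in $G(X)$, and that in the graphic case $x\in uv$ means $d(u,x)+d(x,v)=d(u,v)$.

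For the forward implication (graphic $\Rightarrow$ (I6),(I7)) I would simply translate each axiom into a metric statement about $G(X)$ and verify it by a short case analysis. For (I6): from $u'\in I(u,v)$ and $u\sim u'$ one gets $d(u',v)=d(u,v)-1$, from $v'\in I(u,v)$ and $v\sim v'$ one gets $d(u,v')=d(u,v)-1$, and from $u\in I(u',v')$ one gets $d(u',v')=1+d(u,v')=d(u,v)$; hence $d(u',v)+d(v,v')=d(u,v)=d(u',v')$, that is $v\in I(u',v')$. For (I7): $u'\in I(u,v)$ gives $d(u',v)=d(u,v)-1$; since $v\sim v'$ but $v'\notin I(u,v)$ one has $d(u,v')\in\{d(u,v),d(u,v)+1\}$; and $v\notin I(u',v')$ gives $d(u',v')\neq d(u,v)$. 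Splitting according to whether $d(u,v')$ equals $d(u,v)$ or $d(u,v)+1$ and combining the triangle inequalities $d(u',v')\le d(u',v)+d(v,v')$ and $d(u',v')\ge d(u,v')-1$ forces $d(u',v')=d(u,v')-1$ in both cases, i.e. $u'\in I(u,v')$. This disposes of the easy direction.

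For the converse I would show that (I1)--(I7) imply $uv=I(u,v)$ for all $u,v$. By Lemma \ref{geometric-interval-space}, $\le_u$ is a partial order and for $a\le_u b$ one has $ab=\{x:a\le_u x\le_u b\}$; in particular every covering pair of $\le_u$ is an edge of $G(X)$, so each maximal chain $u=x_0<_u x_1<_u\cdots<_u x_k=v$ of the poset $(uv,\le_u)$ traces a $u$--$v$ path in $G(X)$ through vertices of $uv$, whence $d(u,v)\le k$. I would prove $uv=I(u,v)$ by induction on $d:=d(u,v)$, the cases $d\le 1$ being immediate. The entire inductive step reduces to the neighbour characterization: for every neighbour $u'$ of $u$, one has $u'\in uv$ if and only if $d(u',v)=d-1$; call this property (NC). Granting (NC), both inclusions follow cleanly. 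If $w\in uv\setminus\{u\}$, choose an atom $u'$ of $(uv,\le_u)$ with $u'\le_u w$; then $u'\in uv$, so $d(u',v)=d-1$, and the induction hypothesis applied to the pair $(u',v)$ gives $w\in u'v=I(u',v)$, whence $d(u,w)+d(w,v)\le 1+d(u',w)+d(w,v)=d$, forcing $w\in I(u,v)$. Conversely, if $w\in I(u,v)\setminus\{u\}$, pick a geodesic $u=y_0,\dots,y_d=v$ through $w$; then $y_1$ is a neighbour of $u$ with $d(y_1,v)=d-1$, so $y_1\in uv$ by (NC), and the induction hypothesis gives $w\in y_1v=I(y_1,v)\subseteq uv$, using $y_1v=\{x:y_1\le_u x\le_u v\}\subseteq uv$. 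So it remains only to establish (NC).

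Finally, (NC) is exactly where axioms (I6) and (I7) enter, and I expect this to be the main obstacle. The metric reading obtained in the forward direction already suggests the division of labour: (I6) encodes ``no slack'', since it forces the interval $u'v'$ to have full length $d(u,v)$, whereas (I7) is a replacement/detour property moving a neighbour into an adjacent interval. Correspondingly I would extract the implication $u'\in uv\Rightarrow d(u',v)=d-1$ (no maximal chain of $(uv,\le_u)$ is longer than $d$) from (I6), and the implication $d(u',v)=d-1\Rightarrow u'\in uv$ (no geodesic starts outside the abstract interval) from (I7); in each case one equips the configuration with the edges $uu'$ and a suitable $vv'$ — a neighbour of $v$ whose place in the relevant abstract interval is supplied by the induction hypothesis — and applies the axiom, switching the base point via $uv=vu$ when needed. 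The delicate point, and the reason (I1)--(I5) alone do not suffice, is that the two halves of (NC) are intertwined through the induction: verifying one half at distance $d$ already invokes the other half at distance $d-1$, so the induction must carry both inclusions simultaneously and apply (I6),(I7) to precisely the right quadruple of vertices. Making this bookkeeping rigorous is the crux, and is the content of the Mulder--Nebesk\'y argument \cite{MuNe}.
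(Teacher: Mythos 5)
The paper does not prove this statement at all: Theorem~\ref{graphic-Mulder-Nebesky} is imported verbatim from the cited reference \cite{MuNe}, so there is no in-paper argument to compare yours against. Judged on its own terms, your proposal is genuinely incomplete. The forward implication (graphic $\Rightarrow$ (I6),(I7)) is correct and complete: the translations of the hypotheses into distance equalities are right, and the case analysis for (I7) does close (in the case $d(u,v')=d(u,v)+1$ the triangle inequalities already force $d(u',v')=d(u,v)$, which both yields the conclusion and contradicts $v\notin u'v'$, so that case is vacuous). The reduction of the converse to the neighbour characterization (NC) is also sound: the chain/atom argument via Lemma~\ref{geometric-interval-space}(c) correctly gives $uv\subseteq I(u,v)$, and the geodesic argument correctly gives $I(u,v)\subseteq uv$, each assuming (NC) at distance $d$ and the full statement at distance $d-1$.

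The gap is that (NC) --- for a neighbour $u'$ of $u$, $u'\in uv$ if and only if $d(u',v)=d(u,v)-1$ --- is never proved; you explicitly defer it to \cite{MuNe}. But (NC) is not bookkeeping: it is the entire content of the hard direction, namely that the purely combinatorial axioms (I6) and (I7) pin the abstract intervals to the metric ones. Your proposed division of labour (``(I6) gives $u'\in uv\Rightarrow d(u',v)=d-1$, (I7) gives the converse'') is only a guess, and it is not clear it can be executed as stated: for instance, (I7) has $u'\in uv$ among its \emph{hypotheses}, so using it to \emph{derive} $u'\in uv$ requires applying it to a different quadruple (base-point swaps, a suitably chosen neighbour $v'$ of $v$ whose membership in the relevant abstract interval must itself be certified by the induction), and the two halves of (NC) at distance $d$ each lean on the other at distance $d-1$. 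Without carrying out this interlocked induction --- or at least exhibiting the specific quadruples to which (I6) and (I7) are applied and checking their hypotheses --- the argument does not establish the theorem; it establishes only that the theorem follows from a lemma whose proof is the theorem's actual difficulty.
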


Observe that if $Y$ is a subset of an interval space $X$, we can
define an interval structure on $Y$ by taking the intersection of the
interval $uv$ in $X$ with $Y$ for any pair $u,v \in Y$. If $X$ is a
graphic interval space, then $Y$ endowed with this inherited interval
structure is also a graphic interval space. Note however that $G(Y)$
may be different from the subgraph of $G(X)$ induced by $Y$.

\subsection{FOLB for graphs}

Given a ternary predicate $B$ on a finite set $V$, we define the
binary predicate $\E_{\B}$ on $V$ as follows:
$\E_{\B}(u,v) := u \neq v \wedge \left( \B(u,x,v) \implies (x = u) \vee (x = v) \right)$.

A \emph{graphic interval structure} is a $\sigma$-structure $(V,B)$
where $V$ is a finite set and $B$ is a ternary predicate on $V$ satisfying the following axioms:

\begin{enumerate}[{(IB}1)]
\item $\forall u \forall v  \B(u,u,v) $
\item $\forall u \forall v \forall x  \B(u,x,v) \implies \B(v,x,u)$
\item $\forall u \forall x \B(u,x,u) \implies x=u $
\item $\forall u \forall v \forall w \forall x  \B(u,w,v) \wedge \B(u,x,w) \implies \B(u,x,v)$
\item
  $\forall u \forall v \forall w \forall x \B(u,v,x) \wedge \B(u,w,x) \wedge \B(u,v,w)
  \implies \B(v,w,x)$
\item $\forall u \forall u' \forall v \forall v' \E_{\B}(u,u') \wedge \E_{\B}(v,v') \wedge \B(u',u,v') \wedge \B(u,u',v) \wedge \B(u,v',v) \implies \B(u',v,v')$
\item $\forall u \forall u' \forall v \forall v' \E_{\B}(u,u') \wedge \E_{\B}(v,v') \wedge \B(u,u',v) \wedge \neg \B(u,v',v) \wedge \neg \B(u',v,v') \implies \B(u,u',v')$.
\end{enumerate}

Observe that since $B$ satisfies (IB2), $G_{\B} = (V,E_{\B})$ is an
undirected graph seen as a $\sigma'$-structure (as defined in
Section~\ref{sec-folG}).  By Lemma~\ref{discrete}, $G_{\B}$ is a
connected graph.  Since $B$ satisfies (IB1)--(IB7), by
Theorem~\ref{graphic-Mulder-Nebesky}, for any $u,v,x \in V$,
$x \in I_{G_{\B}}(u,v)$ if and only if $\B(u,x,v)$. When $\B(u,x,v)$ is
true, it means that $x$ belongs to the interval $I_{G_{\B}}(u,v)$.

When considering the class $\C$ of $\sigma$-structures $(V,B)$
satisfying axioms (IB1)--(IB7), we say that a query $Q$ on $\C$ is
definable in first order logic with betweeness (\emph{FOLB-definable})
if it can be defined by a first order formula $F$ over $(V,B)$.

Observe that by the definition of $\E_{\B}$, any FOL-definable query is
also FOLB-definable. In particular the queries $\dist_k$ and
$\dist_{\leq k}$ are FOLB-definable.

\subsection{What can be expressed in FOLB for graphs: first results}

There are properties in FOLB that cannot be expressed using only
FOL. Namely, we prove that $\bipartite$ and $\tree$ are FOLB-definable,
where $\tree$ is $\acyclicity \wedge \connectivity$. Since in FOLB we consider only
connected graphs, $\connectivity$ is a trivial query in FOLB.

A graph is bipartite if and only if for any edge $uv$ and any vertex
$x$, the distances from $x$ to $u$ and to $v$ are different, and thus
if and only if either $u \in I_{G}(x,v)$ or $v \in
I_{G}(x,u)$. Consequently, $\bipartite$ is definable by the following
FOLB-formula:
\[
  \bipartite \equiv \left(\forall u \forall v \forall x \E_{\B}(u,v) \implies \B(x,u,v) \vee \B(x,v,u)\right).
\]

A tree is bipartite.  In a bipartite connected graph, if $G$ is not a
tree, there are two vertices $u, x$ such that $u$ has two neighbors in
the interval $I_G(u,x)$. Indeed, consider a cycle $C$ and an arbitrary
vertex $x$. Let $u$ be the vertex of $C$ that is the furthest from
$x$. Since $G$ is bipartite, the two neighbors $v,w$ of $x$ on $C$
belong to the interval $I(x,u)$.  Consequently, $\tree$ is definable by
the following FOLB-formula:
\[
  \tree \equiv \left(\bipartite \wedge \forall u \forall v \forall w
    \forall x \E_{\B}(u,v) \wedge
  \E_{\B}(u,w) \wedge \B(x,v,u) \wedge \B(x,w,u) \implies v=w\right).
\]

We will use  the predicates $\tri(x,y,z)$, $\squ(x,y,z,u),$ and $\pentagon(x,y,z,u,v)$,
which are true if and only if the vertices $x,y,z$, $x,y,z,u$, and $x,y,z,u,v$ induce respectively
a triangle, a square, or a pentagon of a graph $G$. We will also use the predicate $\ii(v,x,y)$,
which is true if and only of the intervals $I(x,v)$ and $I(y,v)$ intersects only in the vertex $v$.
$\ii(v,x,y)$ can be written as the FOLB-formula $(\forall v' \B(x,v',v) \wedge \B(y,v',v) \implies v'=v)$.

Given four vertices $u, v, x, y$ of $G$, the following predicate
express that $x$ and $y$ belong to a common shortest path going from $u$ to $v$ (reaching first $x$ and then $y$):
\[
 \colinear(u,x,y,v) \equiv \B(u,x,v) \wedge \B(x,y,v)
\]

Three vertices $x, y, z$ of a graph $G$ define a \emph{metric
  triangle} $xyz$~\cite{Ch_metric} if $I(x,y) \cap I(x,z) = \{x\}$,
$I(x,y) \cap I(y,z) = \{y\}$, and $I(x,z) \cap I(y,z) = \{z\}$. This
can be expressed using the predicate
\[
  \mtriangle(x,y,z) \equiv \ii(x,y,z) \wedge \ii(y,x,z) \wedge
  \ii(z,x,y).
\]
The size of a metric triangle $xyz$ is
$\max(d(x,y), d(x,z), d(y,z))$.

Given three vertices $x, y, z$, a metric triangle $x'y'z'$ is a
\emph{quasi-median} of $x, y, z$ if
$d(x,y) = d(x,x') + d(x',y') + d(y',y)$,
$d(x,z) = d(x,x') + d(x',z') + d(z',z)$, and
$d(y,z) = d(y,y') + d(y',z') + d(z',z)$, For any vertices $x, y, z$,
one can obtain a quasi-median $x'y'z'$ of $x, y, z$ by taking
$x' \in I(x,y) \cap I(x,z)$ furthest from $x$,
$y' \in I(x',y) \cap I(y,z)$ furthest from $y$, and
$z' \in I(x',z) \cap I(y',z)$ furthest from $z$.  It can be expressed
by the following predicate:
\begin{align*}
  \quasimed(x,y,z,x',y',z') \equiv  &\mtriangle(x',y',z') \wedge \colinear(x,x',y',y) \\
                                    & \wedge \colinear(x,x',z',z) \wedge \colinear(y,y',z',z).
\end{align*}

A quasi-median $x'y'z'$ of $x, y, z$ such that $x' = y' = z'$ is
called a \emph{median} of $x, y, z$. Equivalently, $m$ belongs to
$I(x,y)\cap I(x,z)\cap I(y,z)$.
\begin{align*}
  \med(x,y,z,m) & \equiv \quasimed(x,y,z,m,m,m)\\
  & \equiv  \B(x,m,y) \wedge \B(x,m,z) \wedge \B(y,m,z).
\end{align*}

The following metric conditions on a graph $G$ play an important way in the definition of many graph classes:
\begin{itemize}
 \item \emph{Triangle Condition} ($\TC$): for any three vertices $v,x,y$ such that $d(v,x)=d(v,y)$ and $x\sim y$, there exists a vertex  $z\in I(x,v)\cap I(y,v)$ such that $xzy$ is a triangle of $G$;
 \item\emph{Quadrangle Condition} ($\QC$): for any four vertices $v,x,y,u$ such that $d(v,x)=d(v,y)=d(v,u)-1$ and $u\sim x,y$, $x\nsim y$, there exists
 a vertex $z\in I(x,v)\cap I(y,v)$ such that $xzyu$ is a square of $G$;
\item \emph{Triangle-Pentagon Condition} ($\TPC$): for any three vertices $v,x,y$ such that $d(v,x)=d(v,y)$ and $x\sim y$, either there exists a vertex  $z\in I(x,v)\cap I(y,v)$ such that $xzy$ is a triangle of $G$, or there exist vertices $z,x',y'$ such that $xx'zy'y$ is a pentagon of $G$, $z\in I(x,v)\cap I(y,v)$, and $d(x,z)=d(y,z)=2$;
 \item \emph{Interval Neighborhood Condition} ($\INC$): for any two distinct vertices $u,v\in V$, the neighbors of $u$ in $I(u,v)$ form a clique.
\end{itemize}
We denote the respective queries by $\tc$, $\qc$, $\pc$, and $\inc$  and we show that these properties are FOLB-definable:

\begin{align*}
  \tc   & \equiv \forall v \forall x \forall y \E_{\B}(x,y) \wedge \ii(v,x,y) \implies
                \tri(x,y,v) \\
  \qc  & \equiv  \forall v \forall x \forall y \forall u \E_{\B}(u,x) \wedge \E_{\B}(u,y) \wedge
                 \neg \E_{\B}(x,y) \\
              & \quad \wedge \B(u,x,v)
                \wedge \B(x,u,y) \wedge \B(u,y,v) \wedge \ii(v,x,y)\\
               & \quad \implies \squ(u,x,v,y) \\
\tpc   &\equiv \forall v \forall x \forall y \E_{\B}(x,y) \wedge \ii(v,x,y) \implies \tri(x,y,v) \\
  & \quad \vee \exists x' \exists y' \pentagon(x,x',v,y',y) \\
  \inc   &\equiv  \forall u \forall v \forall x \forall y \E_{\B}(u,x) \wedge \E_{\B}(u,y) \wedge
                 \B(u,x,v) \wedge \B(u,y,v) \\
  & \quad \implies \E_{\B}(x,y).\\
\end{align*}

Observe that if a graph $G$ satisfies $\INC$, then $G$ does not
contain any square. Moreover, when $G$ satisfies $\QC$, $G$ satisfies
$\INC$ if and only if $G$ does not contain any square.

\section{Weakly modular graphs,  their subclasses and superclasses}\label{sec:weakly-modular}
In this section, we present the FOLB-definability of weakly modular graphs and their main subclasses and super-classes, which constitute an important part of Metric Graph Theory. Subclasses of weakly modular graphs are the following classes of graphs:  median, modular, quasi-modular, quasi-median, pseudo-modular, weakly median, bridged and weakly bridged, Helly, dually polar, and sweakly modular. Meshed graphs constitute a super-class of weakly modular graphs. Basis graphs of matroids and of even $\Delta$-matroids are subclasses of meshed graphs.

\subsection{Weakly modular graphs}

  Weakly modular graphs have been introduced in the papers~\cite{Ch_metric} and~\cite{BaCh_helly}. A nice local-to-global theory of weakly modular graphs and their subclasses mentioned above has been developed in the recent paper~\cite{CCHO}. For results about weakly modular graphs, the reader can consult the survey~\cite{BaCh_survey} and the paper~\cite{CCHO}.

A graph is \emph{weakly modular} if it satisfies the triangle condition ($\TC$) and the quadrangle condition ($\QC$). Thus being weakly modular can be expressed by the following FOLB-query:
\[
   \wmodular \equiv  \tc \wedge \qc.
\]

In~\cite{Ch_metric}, weakly modular graphs have been characterized as
graphs in which metric triangles $xyz$ are \emph{strongly
  equilateral}, i.e., for any $u \in I(y,z)$, we have
$d(x,u) = d(x,y) = d(x,z)$. In fact, this characterization leads to another FOLB query characterizing weakly modular graphs:

\begin{align*}
  \stronglyequilateral(x,y,z) & \equiv
  \forall u \forall v \E_{\B}(u,v) \implies \\
  & \quad \big( ( (\B(y,u,z) \wedge \B(y,v,z) ) \implies \neg (\B(x,u,v) \vee \B(x,v,u))) \\
  & \quad \wedge ( (\B(x,u,z) \wedge \B(x,v,z) ) \implies \neg (\B(y,u,v) \vee \B(y,v,u))) \\
  & \quad \wedge ( (\B(x,u,y) \wedge \B(x,v,y) ) \implies \neg (\B(z,u,v) \vee \B(z,v,u)) ) \big) \\
  \strongEquil & \equiv  \forall x\forall y \forall z \stronglyequilateral(x,y,z)\\
\end{align*}

The predicate $\stronglyequilateral(x,y,z)$ establishes that for any
adjacent vertices $u, v$ of $I(y,z)$ (for $I(x,y)$ and $I(x,z)$, the
arguments are similar), we have $u \notin I(x,v)$ and
$v \notin I(x,u)$, yielding $d(x,u) = d(x,v)$. Consequently, the
connectedness of $I(y,z)$ establishes that all vertices of $I(y,z)$
have the same distance to $x$. This shows that
$\stronglyequilateral(x,y,z)$ is true if and only if $xyz$ is a
strongly equilateral metric triangle.

 {$$ \wmodular \equiv \strongEquil.$$ }

We say that a graph $G$ has equilateral metric triangles if every
metric triangle $xyz$ of $G$ is \emph{equilateral}, i.e.,
$d(x,y) = d(x,z) = d(y,z)$. One can ask if graphs with equilateral
metric triangles are FOLB-definable.

A \emph{modular} graph is a bipartite weakly modular graph, i.e., a bipartite graph satisfying the quadrangle condition. Thus being  modular can be expressed by the following FOLB-queries:
\begin{align*}
  \modular &\equiv \bipartite \wedge \wmodular\\
           &\equiv \bipartite \wedge \qc.\
\end{align*}

A graph is \emph{pseudo-modular}~\cite{BaMu_pm} if it satisfies the triangle
condition and if for any three vertices $v,x,y$ such that $d(x,y) = 2$
and $d(v,x)=d(v,y)=k$, there exists a vertex $z\sim x,y$ such that
$d(v,z)=k-1$. The second property can be viewed as a strengthening of
the quadrangle condition. In fact, a graph is pseudo-modular if and
only if all metric triangles have size at most $1$ and can thus be
FOLB-defined by the following formula:
\[
  \pmodular \equiv \forall x \forall y \forall z \mtriangle(x,y,z) \implies (x = y = z)
  \vee \tri(x,y,z).
\]

A \emph{quasi-modular} graph is a $K_4^-$-free weakly modular graph~\cite{BaMuWi} and thus
being quasi-modular is a FOLB-definable property.
\[
  \qmodular \equiv \wmodular \wedge \neg {\subgraph}_{K_4^-}.
\]
Quasi-modular graphs are pseudo-modular but the converse inclusion
does not hold.

A graph $G=(V,E)$ is called \emph{meshed}~\cite{BaCh_survey} if the
following condition ($\QC^-$) is satisfied for any three vertices
$v,x,y$ with $d(x,y)=2$: there exists a common neighbor $z$ of $x$ and
$y$ such that $2d(v,z)\le d(v,x)+d(v,y)$. This condition seems to be a
relaxation, but it implies the triangle condition (that is not implied
by the quadrangle condition). Conversely, ($\TC$) and ($\QC$) imply
($\QC^-$) and thus weakly modular graphs are meshed. In meshed graphs,
any metric triangle $xyz$ is equilateral~\cite{BaCh_conmed}.

\begin{lemma}
  $G$ is a meshed graph if and only if for any metric triangle
  $vxy$, if $d(x,y) = 2$, then $d(v,x) = d(v,y) =2$ and there exists
  $z \sim x,y$ such that $d(v,z) = 2$.
\end{lemma}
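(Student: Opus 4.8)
The plan is to prove the biconditional in both directions, using the definition of meshed graphs via condition $(\QC^-)$ together with the known fact that every metric triangle in a meshed graph is equilateral.

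\textbf{Forward direction.} Suppose $G$ is meshed and let $vxy$ be a metric triangle with $d(x,y)=2$. Since in a meshed graph every metric triangle is equilateral \cite{BaCh_conmed}, we immediately get $d(v,x)=d(v,y)=d(x,y)=2$. It remains to produce a common neighbor $z$ of $x$ and $y$ with $d(v,z)=2$. Applying $(\QC^-)$ to the triple $v,x,y$ (valid since $d(x,y)=2$) yields a common neighbor $z \sim x,y$ with $2d(v,z)\le d(v,x)+d(v,y)=4$, hence $d(v,z)\le 2$. I would then argue $d(v,z)\ge 2$: since $z \sim x$ and $d(v,x)=2$, we have $d(v,z)\ge d(v,x)-1 = 1$, which only gives $d(v,z)\in\{1,2\}$, so the real work is ruling out $d(v,z)=1$. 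If $d(v,z)=1$ then $z \in I(v,x)\cap I(v,y)$ (as $d(v,x)=d(v,z)+d(z,x)=1+1$ and similarly for $y$), contradicting that $vxy$ is a metric triangle, which forces $I(v,x)\cap I(v,y)=\{v\}$ and $v\neq z$. Hence $d(v,z)=2$.

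\textbf{Converse direction.} Suppose the stated condition holds; I must verify $(\QC^-)$ for every triple $v,x,y$ with $d(x,y)=2$. The subtlety is that the hypothesis only speaks about triples forming a \emph{metric} triangle, whereas $(\QC^-)$ must hold for \emph{all} triples with $d(x,y)=2$. The standard device is to reduce an arbitrary triple to a metric triangle by taking a quasi-median: given arbitrary $v,x,y$ with $d(x,y)=2$, form a quasi-median $x'y'z'$ of $v,x,y$ (as described in the excerpt), which is a metric triangle with $x',y',z'$ lying on shortest paths realizing the three pairwise distances. I would track the value of $d(x,y)=2$ through this decomposition: since $x,y$ are at distance $2$, their quasi-median has controlled size, and the condition applied to the metric triangle produces a vertex which can be pulled back along the geodesics to yield the required common neighbor $z$ of $x,y$ with $2d(v,z)\le d(v,x)+d(v,y)$. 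The bipartite-like case analysis on whether the metric triangle is a single vertex, a triangle, or a genuine distance-$2$ metric triangle completes the argument.

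\textbf{Main obstacle.} The forward direction is essentially immediate once the equilaterality of metric triangles is invoked; the delicate point there is only the exclusion of $d(v,z)=1$, handled by the metric-triangle property. The genuine difficulty lies in the converse: bridging from the hypothesis, stated only for metric triangles with $d(x,y)=2$, to condition $(\QC^-)$ for \emph{all} triples with $d(x,y)=2$. The quasi-median reduction is the natural tool, but one must carefully verify that when $d(x,y)=2$ the quasi-median places $x',y'$ so that a common neighbor found at the metric-triangle level transports back to a common neighbor of the original $x,y$ satisfying the averaged-distance inequality. I expect this transport-back step, with its distance bookkeeping along the geodesics $I(x,x')$ and $I(y,y')$, to be where the proof requires the most care.
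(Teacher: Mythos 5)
Your forward direction is complete and matches the paper's argument exactly: equilaterality of metric triangles in meshed graphs gives $d(v,x)=d(v,y)=2$, ($\QC^-$) gives $z\sim x,y$ with $d(v,z)\le 2$, and $d(v,z)=1$ is excluded because it would put $z$ in $I(v,x)\cap I(v,y)$. Your converse is the same strategy as the paper's (reduce an arbitrary triple $v,x,y$ with $d(x,y)=2$ to the quasi-median $v'x'y'$), but you stop at the plan and, more importantly, you slightly misplace where the work is. Since $d(x,x')+d(x',y')+d(y',y)=d(x,y)=2$, the case split is simply on whether $x=x'$ and $y=y'$. If not, say $x\ne x'$, then $x'$ is either $y$ itself or a common neighbor of $x$ and $y$ lying in $I(v,x)$, and in either case one exhibits a common neighbor $z$ with $d(v,z)=d(v,x)-1\le d(v,y)+1$, so ($\QC^-$) holds without ever invoking the hypothesis; these are the easy cases. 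The hypothesis is used only when $x=x'$ and $y=y'$, i.e., when $v'xy$ is itself a metric triangle with $d(x,y)=2$; there the $z\sim x,y$ it provides is already a common neighbor of the original $x$ and $y$ — no ``pulling back along the geodesics $I(x,x')$ and $I(y,y')$'' is needed, and the only bookkeeping is $d(v,z)\le d(v,v')+d(v',z)=d(v,v')+d(v',x)=d(v,x)=d(v,y)$. So your outline is on the right track, but the transport-back step you anticipate as delicate does not actually occur, and the case analysis you defer is the entire content of the converse; as written the proposal does not yet constitute a proof of that direction.
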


\begin{proof}
  Let $G$ be a meshed graph and consider a metric triangle $vxy$ such
  that $d(x,y)= 2$. Since metric triangles in meshed graphs are
  equilateral, we have $d(v,x)=d(v,y) = 2$, and by ($\QC^-$), there
  exists $z \sim x,y$ such that $d(v,z) \leq 2$. If $d(v,z) =1$, then
  $z \in I(v,x) \cap I(v,y) \cap I(x,y)$ and thus $vxy$ is not a
  metric triangle. Consequently, $d(v,z) = 2$.

  Consider now a graph $G$ such that for any metric triangle $vxy$ of
  $G$ with $d(x,y) = 2$, we have $d(v,x) = d(v,y) =2$ and there exists
  $z \sim x,y$ such that $d(v,z) = 2$. We show that ($\QC^-$) holds in
  $G$.  Consider three vertices $v,x,y$ such that $d(x,y) = 2$ and let
  $v'x'y'$ be a metric triangle such that $v',x' \in I(v,x)$,
  $v',y' \in I(v,y)$, and $x',y' \in I(x,y)$. If $x \neq x'$, then
  either $x'=y$ (if $y \in I(x,v)$) or $x' \sim x,y$. In the first
  case, let $z$ be a common neighbor of $x$ and $y$ and in the second
  case, let $z = x'$. In both cases, observe that
  $d(v,z) = d(v,x)-1 \leq d(v,y)+1$ and thus ($\QC^-$) holds for
  $v,x,y$. We can thus assume now that $x=x'$ and for similar reasons
  that $y = y'$. Since $d(x,y)=2$, we know by hypothesis that
  $d(v',x) = d(v',y) = 2$ and that there exists $z \sim x,y$ such that
  $d(v',z) = 2$. Consequently,
  $d(v,z) \leq d(v,v')+d(v',z) = d(v,v')+ 2 = d(v,v') + d(v',x) =
  d(v,x) = d(v,y)$ and thus ($\QC^-$) holds for $v,x,y$.
\end{proof}

Consequently, meshedness of a graph can be written as the following
FOLB query:
\begin{align*}
  \meshed  &\equiv \forall v \forall x \forall y \mtriangle(v,x,y) \wedge {\dist}_2(x,y) \implies {\dist}_2(x,v) \wedge {\dist}_2(y,v)\\
  & \quad \wedge (\exists z \E_{\B}(x,z) \wedge \E_{\B}(y,z) \wedge {\dist}_2(z,v)).
\end{align*}

The previous lemma establishes that meshed graphs are precisely the
graphs in which every metric triangle $xyz$ of size $2$, there exists
a common neighbor of $y$ and $z$ at distance $2$ from $x$.  Therefore
one can ask whether meshed graphs are exactly the graph where for each
metric triangle $xyz$, $y$ and $z$ can be connected by a shortest path
in which all vertices have the same distance to $x$.

\subsection{Median graphs} Median graphs constitute the most important class of graphs in Metric Graph Theory. This is due to the occurrence of
median graphs in completely different areas of mathematics and computer science. This is also due to their deep and rich combinatorial
and geometric structure, which was an inspiration for most of subsequent generalizations. Median graphs  originally arise in
universal algebra~\cite{Av,BiKi} and their properties have been first investigated in~\cite{Mu,Ne_med}. It was shown in~\cite{Ch_CAT,Ro} that the
cube complexes of median graphs are exactly the CAT(0) cube complexes, i.e., cube complexes of global non-positive curvature. CAT(0) cube complexes,
introduced and nicely characterized in~\cite{Gr} in a local-to-global way, are now one of the principal objects of investigation in geometric
group theory~\cite{Sa_survey}. Median graphs also occur in Computer Science: by~\cite{BaCo} they are exactly the domains of event structures
(one of the basic abstract models of concurrency)~\cite{NiPlWi} and median-closed subsets of hypercubes are exactly the solution sets
of 2-SAT formulas~\cite{MuSch,Sch}. The bijections between median graphs, CAT(0) cube complexes, and event structures have been used
in~\cite{ChCh_reg,ChCh_MSO,Ch_nice} to disprove three conjectures in
concurrency theory.  Finally, median graphs, viewed
as median closures of sets of vertices of a hypercube, contain all most  parsimonious (Steiner) trees~\cite{BaFoRo} and as such have been
extensively applied in human genetics. For a survey on median graphs and their connections with other discrete and geometric structures, see the
books~\cite{HaImKl,Knuth}, the surveys~\cite{BaCh_survey,KlMu}, and the paper~\cite{CCHO}.

A graph $G$ is \emph{median} if any triplet of vertices has a unique
median. Notice that, in modular graphs, any triplet of vertices
has at least one median, and thus median graphs are the modular graphs
where the medians are unique. Equivalently, median graphs are modular graphs not containing
induced $K_{2,3}$~\cite{Mu}.

\begin{align*}
  \modular & \equiv \forall x \forall  y \forall z \exists m \med(x,y,z,m) \\
           & \equiv \forall x \forall y \forall z \mtriangle(x,y,z) \implies (x=y=z)\\
  \median  & \equiv \forall x \forall y \forall  z \exists ! m \med(x,y,z,m) \\
           & \equiv \modular \wedge \neg {\subgraph}_{K_{2,3}}.
\end{align*}

Quasi-median graphs has been introduced and studied in~\cite{BaMuWi} and pseudo-median graphs has been introduced in~\cite{BaMu_pmedg}. For applications of quasi-median graphs in geometric group theory, see the PhD thesis~\cite{Ge}. Weakly median graphs has been introduced and characterized in~\cite{Ch_metric,BaCh_wmg}.  For results and bibliographic references about quasi-median and weakly median graphs, see the survey~\cite{BaCh_survey}.

A \emph{quasi-median} (respectively, \emph{pseudo-median},
\emph{weakly median}) graph is a quasi-modular (respectively,
{pseudo-modular}, {weakly modular}) graph in which each triplet of
vertices has a unique quasi-median. These definitions already show
that quasi-median, pseudo-median, and weakly median graphs can be
expressed by FOLB-queries. These classes of graphs have also been
characterized within their superclasses by forbidden
subgraphs (see Figure \ref{Forbidden_subgraphs}). Therefore, similarly to median graphs, we can describe
these classes by the following FOLB-queries.

\begin{align*}
\qmedian & \equiv \qmodular \wedge \neg {\subgraph}_{K_{2,3}}\\
  \pmedian & \equiv \pmodular \wedge \neg {\subgraph}_{H_1,H_2,H_3,H_4} \\
  \wmedian & \equiv \wmodular \wedge \neg {\subgraph}_{H_1,H_2,H_3,H_4}.
\end{align*}

\begin{center}
\begin{figure}
\includegraphics[width=150mm]{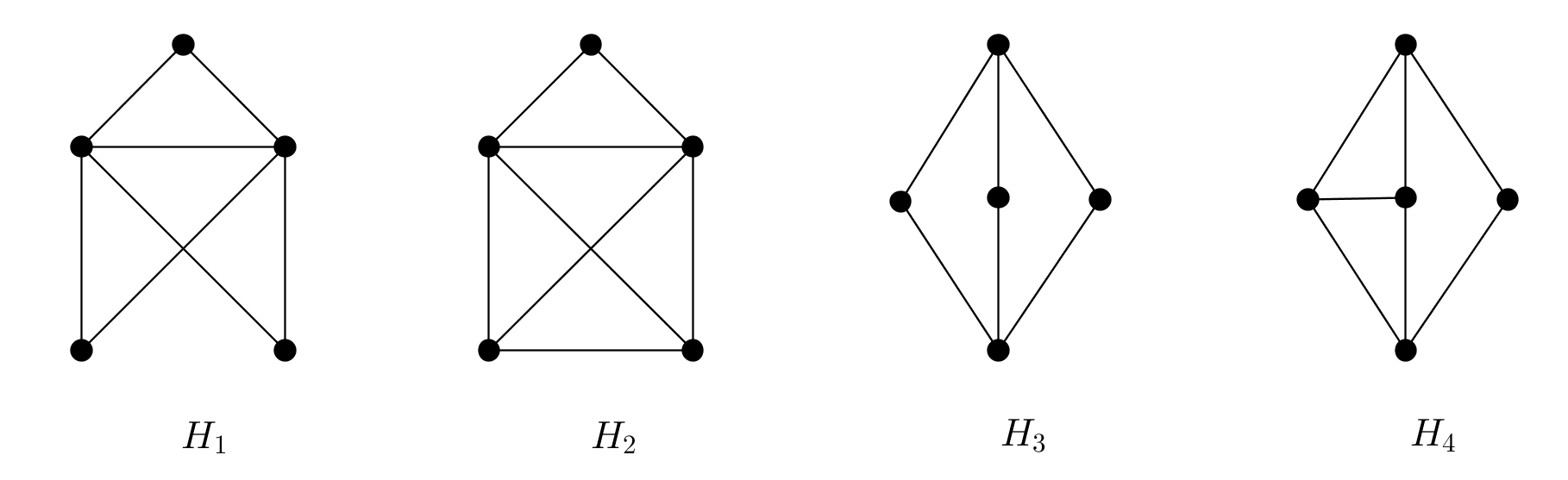}
\caption{Forbidden subgraphs }
\label{Forbidden_subgraphs}
\end{figure}
\end{center}

\subsection{Bridged graphs and graphs with convex balls}

The convexity of balls and the convexity of neighborhoods of convex sets are fundamental features of
geodesic metric spaces, which are globally nonpositively curved~\cite{BrHa}. CAT(0) (alias nonpositively curved geodesic metric spaces), introduced by Gromov in his seminal paper~\cite{Gr}, and groups acting on them are fundamental objects of study in metric geometry and geometric group theory. The graphs  with convex
balls have been introduced and characterized in~\cite{FaJa,SoCh} as graphs without embedded isometric cycles of lengths different from 3 and 5 and in which all neighbors of a
vertex $u$ on shortest $(u,v)$-paths form a clique.  One of their important subclass  is the class of bridged
graphs: these are the graphs without embedded isometric cycles of length greater than 3 and they are exactly the graphs in which
the balls around convex sets are convex~\cite{FaJa,SoCh}.  It was proved in~\cite{Ch_CAT} that the simplicial complexes having bridged graphs as 1-skeletons are the simply connected simplicial complexes in which the links of vertices are flag complexes without embedded 4- and 5-cycles. Under  this form, bridged graphs and complexes have been rediscovered in~\cite{JaSw} under the name ``systolic complexes'' and have been investigated in geometric group theory as the good combinatorial analogs of CAT(0) metric spaces.
Weakly systolic (alias weakly bridged) graphs and complexes have been introduced in~\cite{Os} and further studied in~\cite{ChOs}, where it is shown that they are exactly the weakly modular graphs with convex balls. More recently, a detailed investigation of graphs with convex balls and of their triangle-pentagon complexes has been provided in~\cite{ChChGi}.

A graph $G$ is a \emph{graph with convex balls} if all balls of $G$ are convex sets. A graph $G$ is called \emph{bridged} if all neighborhoods of convex sets are convex.
Bridged graphs are exactly weakly modular graphs without $C_4$ and
$C_5$ as induced subgraphs~\cite{Ch_metric}. \emph{Weakly bridged graphs} are the weakly
modular graphs with convex balls; they are exactly the weakly modular
graphs without $C_4$~\cite{ChOs}. More recently, it was shown in~\cite{ChChGi}
that graphs with convex balls are exactly the graphs satisfying the
Triangle-Pentagon and the Interval Neighborhood Conditions.

We can thus describe these classes by the following FOLB-queries:

\begin{align*}
  \bridged  & \equiv \wmodular \wedge \neg {\subgraph}_{C_4,C_5}\\
  \wbridged & \equiv \wmodular \wedge \neg {\subgraph}_{C_4} \\
            & \equiv \wmodular \wedge \inc \\
  \cballs   & \equiv \tpc \wedge \inc.
\end{align*}

Bucolic graphs have been introduced and studied in~\cite{BCCGO} and they are the graphs that can be obtained by retractions from
Cartesian products of weakly bridged graphs.  This is a far reaching common
generalization of median graphs and (weakly) bridged graphs.
Notice that median graphs are exactly the graphs which can be obtained
from cubes (Cartesian products of edges) via gated amalgamations. The bucolic graphs are exactly
the graphs which can be obtained from Cartesian products of weakly bridged graphs by gated amalgamations.
It was shown in~\cite{BCCGO} that prism complexes of bucolic graphs have many properties of CAT(0) spaces.

\emph{Bucolic graphs} have been characterized in~\cite{BCCGO} as weakly
modular graphs without $K_{2,3}$, $W_4$, and $W_4^-$ as induced
subgraphs. Consequently, they can be characterized by the following FOLB-query:
\[
  \bucolic \equiv \wmodular \wedge \neg {\subgraph}_{K_{2,3},W_4,W_4^-}.
\]

\subsection{Helly graphs} A geodesic metric space is injective if any family of
pairwise intersecting balls has a non-empty intersection. Similarly to CAT(0) spaces, injective metric spaces (called also hyperconvex spaces or absolute retracts) appear
independently in various fields of mathematics and computer science: in topology and metric
geometry, in combinatorics, in functional
analysis and fixed point theory, and in algorithmics.  The distinguishing feature of
injective spaces is that any metric space admits an injective hull, i.e., the smallest injective
space into which the input space isometrically embeds~\cite{Dr,Is}. Helly graphs are a discrete counterpart
of injective metric spaces and, again, there are many equivalent definitions of such graphs, hence they are also
known as e.g. absolute retracts~\cite{BaPe,BaPr,Qu,Pe}. More recently, a local-to-global characterization of
Helly graphs has been given in~\cite{CCHO} and the groups acting on Helly graphs have been investigated in~\cite{CCHGO}.

A family of subsets $\mathcal F$ of a set $X$ satisfies the
\emph{Helly property} if for any subfamily ${\mathcal F}'$ of
$\mathcal F$, the intersection
$\bigcap {\mathcal F}'=\bigcap \{ F: F\in {\mathcal F}'\}$ is nonempty
if and only if $F\cap F'\ne \emptyset$ for any pair
$F,F'\in {\mathcal F}'$.

A graph $G$ is a \emph{Helly graph} if the family of balls of $G$ has
the Helly property.  A \emph{clique-Helly graph} is a graph in which
the collection of maximal cliques has the Helly property. Observe that
each Helly graph is clique-Helly but the converse does not hold:
indeed, clique-Hellyness is a local property while Hellyness is a
global property. It was shown in~\cite{CCHO} that $G$ is Helly if and
only if $G$ is clique-Helly and the clique complex $X(G)$ of a graph
$G$ is simply connected.  The following result also follows from~\cite{CCHO}:

\begin{lemma}
  A graph $G$ is Helly if and only if $G$ is a clique-Helly weakly
  modular graph in which any $C_4$ is included in a $W_4$.
\end{lemma}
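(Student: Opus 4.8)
The plan is to lean on the characterization recalled immediately before the statement: being clique-Helly appears on both sides of the equivalence (it is part of the right-hand hypothesis, and every Helly graph is clique-Helly), so the content of the lemma reduces, via that characterization, to controlling the simple connectivity of the clique complex $X(G)$. Concretely, I would prove the two implications separately, using the Helly property of balls for the forward direction and the simple connectivity of the triangle--square complex of weakly modular graphs for the backward direction.

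For the forward implication, assume $G$ is Helly. That $G$ is clique-Helly is already noted. To get weak modularity I would verify $\TC$ and $\QC$ directly from the Helly property of balls. For $\TC$, given $x\sim y$ with $d(v,x)=d(v,y)=k$, the three balls $B_{k-1}(v)$, $B_1(x)$, $B_1(y)$ pairwise intersect (a neighbour of $x$, resp.\ of $y$, towards $v$; and $x$ itself lies in $B_1(x)\cap B_1(y)$), so the Helly property yields a common vertex $z$; since $d(v,z)\le k-1<k$ we have $z\ne x,y$, whence $z\sim x,y$ and $d(v,z)=k-1$, giving the required triangle. The argument for $\QC$ uses the same three balls, with the witness $u\in B_1(x)\cap B_1(y)$ and the extra observation that $d(v,u)=k+1$ forces $d(u,z)\ge 2$, so $xzyu$ is an induced square. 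Finally, for an induced $C_4=x_1x_2x_3x_4$ the four unit balls $B_1(x_1),\dots,B_1(x_4)$ pairwise intersect (consecutive ones share an edge, opposite ones a common neighbour since $d(x_i,x_{i+2})=2$), so Helly yields a vertex $c$ adjacent to all four; as $c$ cannot equal any $x_i$, the $C_4$ sits in a $W_4$ centred at $c$.

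For the backward implication, assume $G$ is clique-Helly, weakly modular, and every $C_4$ lies in a $W_4$. By the characterization it suffices to prove that $X(G)$ is simply connected. Here I would invoke the result of~\cite{CCHO} that the \emph{triangle--square complex} $X_{\triangle\square}(G)$, the $2$-complex obtained from $G$ by filling every triangle and every induced $4$-cycle, is simply connected for weakly modular $G$. Now $X_{\triangle\square}(G)$ is obtained from the $2$-skeleton of $X(G)$ by attaching a $2$-cell along each induced $4$-cycle. By hypothesis each such $4$-cycle $x_1x_2x_3x_4$ has a common neighbour $c$, so it already bounds the disk formed by the four triangles $cx_1x_2$, $cx_2x_3$, $cx_3x_4$, $cx_4x_1$ of $X(G)$, and is therefore already null-homotopic in $X(G)$. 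Consequently each attaching map introduces a relation that is already trivial, and by van Kampen's theorem $\pi_1(X(G))\cong\pi_1(X_{\triangle\square}(G))=0$. Thus $X(G)$ is simply connected, and combined with clique-Hellyness this gives that $G$ is Helly.

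The main obstacle is the backward direction, and within it the one external input, namely the simple connectivity of $X_{\triangle\square}(G)$ for weakly modular $G$. If it must be reproved rather than cited, the natural route is a combinatorial disk-filling induction: take a combinatorial loop, fix a basepoint, and repeatedly apply $\TC$ and $\QC$ at a vertex of the loop farthest from the basepoint in order to push the loop closer to the basepoint, decreasing either its length or its distance to the basepoint until it contracts; this is precisely the kind of local-to-global argument carried out in~\cite{CCHO}. A secondary point to treat with care is that only the $2$-skeleton is relevant for $\pi_1$, so the higher cliques of $X(G)$ play no role, and that the ``square'' filled in the center argument is genuinely the induced $4$-cycle appearing both in the $C_4\subseteq W_4$ hypothesis and in $X_{\triangle\square}(G)$, so the two notions of square coincide.
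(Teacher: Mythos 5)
Your proof is correct, but note that the paper itself offers no argument for this lemma: it merely states that the result ``also follows from~\cite{CCHO}'', right after quoting from the same source the characterization that $G$ is Helly if and only if $G$ is clique-Helly and the clique complex $X(G)$ is simply connected. So your proposal does not take a different route from the paper's proof --- it supplies a proof where the paper supplies only a citation. The forward direction is entirely elementary and checks out: the three balls $B_{k-1}(v)$, $B_1(x)$, $B_1(y)$ pairwise intersect and their common point $z$ satisfies $d(v,z)=k-1$ and $z\sim x,y$, which gives ($\TC$); for ($\QC$) the same $z$ together with $d(v,u)=k+1$ forces $d(u,z)\ge 2$, so $xzyu$ is an induced square; and the four unit balls around an induced $C_4$ yield the center of the required $W_4$ (the common point cannot be any $x_i$ because opposite vertices are non-adjacent). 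The backward direction correctly reduces, via the quoted clique-Helly-plus-simply-connected characterization, to showing $\pi_1(X(G))$ is trivial, and the van Kampen step is sound: since every induced $4$-cycle already bounds the four triangles through the center of its $W_4$ inside the $2$-skeleton of $X(G)$, attaching the square $2$-cells of $X_{\triangle\square}(G)$ kills only relations that are already trivial, so $\pi_1(X(G))\cong\pi_1(X_{\triangle\square}(G))$. The one external input you rely on --- that the triangle--square complex of a weakly modular graph is simply connected --- is indeed one direction of the local-to-global characterization of weakly modular graphs in~\cite{CCHO}, and your sketch of its disk-filling proof (push a farthest vertex of the loop toward the basepoint using ($\TC$) and ($\QC$)) is the standard one; since the paper defers the entire lemma to that reference, citing this single ingredient is a strictly smaller debt. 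Your two cautionary remarks (only the $2$-skeleton is relevant for $\pi_1$, and ``square'' means induced $4$-cycle in both the hypothesis and in $X_{\triangle\square}(G)$) are both apposite.
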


Clique-Hellyness of a graph can be characterized by the following
condition~\cite{Dra,Szw} that is a particular case of the
Berge-Duchet condition~\cite{BeDu}.  A graph $G$ is clique-Helly if and
only if for any triangle $T$ of $G$ the set $T^*$ of all vertices of
$G$ adjacent with at least two vertices of $T$ contains a vertex
adjacent to all remaining vertices of $T^*$.

Since it is easy to express the fact that any square is included in a 4-wheel, we can describe Helly and clique-Helly graphs by the following FOLB-queries.
\begin{align*}
  \cqwq & \equiv \forall w \forall x \forall y  \forall z \squ(w,x,y,z) \implies \exists u \left(\E_{\B}(w,u) \wedge \E_{\B}(x,u) \wedge \E_{\B}(y,u) \wedge \E_{\B}(z,u)\right) \\
  \chelly & \equiv \forall x \forall y \forall z \tri(x,y,z)\implies \big(\exists u \forall v \big( v \neq u \wedge (\tri(x,y,v) \vee \tri (x,z,v)  \\
            &\quad \vee \tri(y,z,v))\big) \implies \E_{\B}(u,v)\big)\\
  \helly & \equiv \chelly \wedge \wmodular \wedge \cqwq.
\end{align*}

\subsection{Dual polar graphs,  sweakly  modular graphs, and basis graphs}
Projective and polar spaces are the most fundamental types of incidence geometries; for definition and for a full account
 of their theory, see~\cite{Shu,Ti}. Dual polar spaces are duals of polar spaces, seen as incidence
geometries. It was observed in~\cite{BaCh_survey} that \emph{dual polar graphs}, which are the collinearity graphs
of dual polar spaces, are weakly modular. This is a simple
consequence of Cameron’s characterization~\cite{Ca} of dual polar graphs (which is of metric type). Notice
also that there is a local-to-global characterization of dual polar graphs, due to
Brouwer and Cohen~\cite{BrCo}. Moreover, it was shown in~\cite{CCHO} that dual polar
graphs are exactly the thick weakly modular graphs without $K_{4}^-$ and $K_{3,3}^-$ as isometric subgraphs.

The \emph{swm-graphs (sweakly modular graphs)} represent a natural
extension of dual polar graphs, because they are defined as the weakly
modular graphs not containing induced $K^{-}_4$ and isometric
$K^{-}_{3,3}$~\cite{CCHO}. They also extend the \emph{strongly modular
  graphs} of~\cite{Hi}, that are exactly the modular graphs not
containing induced $K^{-}_4$ and isometric $K^{-}_{3,3}$. Strongly
modular graphs arise in the dichotomy theorem of~\cite{Hi} as exactly
the graphs for which the multifacility location problem (alias the
0-extension problem) is polynomial.  One can define a cell structure
on sweakly modular graphs, where cells are orthoschemes corresponding
to gated dual polar graphs. In~\cite{CCHO}, various characterization
and properties of cell complexes arising from swm-graphs are given.

Due to the characterizations of dual polar graphs and swm-graphs given above, they can be described by the following FOLB-queries:
\begin{align*}
  \thick & \equiv \forall u \forall v {\dist}_2(u,v) \implies \exists x \exists y \squ(u,x,v,y)\\
  \dualpolar &\equiv \wmodular \wedge \thick \wedge \neg {\isometric}_{K_4^-,K_{3,3}^-} \\
  \stmodular & \equiv \modular  \wedge \neg {\isometric}_{K_4^-,K_{3,3}^-}\\
  \swmodular & \equiv \wmodular  \wedge \neg {\isometric}_{K_4^-,K_{3,3}^-}.
\end{align*}

Matroids constitute an important unifying structure in combinatorics, geometry, algorithmics, and
combinatorial optimization~\cite{Ox}. Matroids can be defined in several equivalent ways, in particular
in terms of bases. Basis graphs of matroids have the bases as vertex-sets and elementary exchanges as edges.
Basis graphs faithfully represent their matroids, thus studying the basis graph amounts
to studying the matroid itself. Basis graphs are also know to coincide with the 1-skeleton of the basis
polytope of a matroid. Maurer~\cite{Mau} characterized basis graphs of matroids in a pretty way by involving
two local conditions and a global metric condition. He also conjectured that the metric conjecture can be replaced by a local
condition and simple connectivity of its triangle-square complex. This conjecture was confirmed in~\cite{ChChOs}.
Delta-matroids constitute an interesting generalization of matroids. They have been introduced in different but independent
way in the papers~\cite{Bou,ChKa} and~\cite{DrHa} and later have been considered as the Lagrangian matroids in the context of
Coxeter matroids~\cite{BoGeSe}. A characterization of basis graphs of even Delta-matroids in the spirit of Maurer's characterization
was given in~\cite{Ch_basis}. Using it, a local-to-global characterization was given in~\cite{ChChOs}.

A \emph{matroid} on a finite set of elements $I$ is a collection $\mathcal B$ of subsets of $I$, called bases, which satisfy the following exchange
axiom: for all $A,B\in \mathcal B$ and $a\in A\setminus B$ there exists $b\in B\setminus A$ such that $A\setminus \{a\}\cup \{ b\}\in \mathcal B$
(the base $A\setminus \{a\}\cup \{ b\}$ is obtained from the base $A$ by an \emph{elementary exchange}). The basis
graph $G =G({\mathcal B})$ of a matroid $\mathcal B$ is the graph whose vertices are the bases of $\mathcal B$ and edges are
the pairs $A,B$ of bases differing by an elementary exchange, i.e., $|A\Delta B|= 2$. \emph{Delta-matroids} are defined in the same way as the matroids by replacing
everywhere in the basis exchange axiom the set difference by the symmetric difference. An even Delta-matroid is a delta-matroid in which all bases have an even number of elements.
The basis graph of an even Delta-matroid is defined in the same way as the basis graph of a matroid. The basis graphs of matroids are isometric
subgraphs of Johnson graphs (because all bases have the same cardinality) and the basis graphs of even Delta-matroids are isometric subgraphs of
halved-cubes (because all bases have even size).

By Maurer's~\cite{Mau} characterization and its improvement provided in~\cite{ChChOs}, a graph is the basis graph of a matroid if and only if it satisfies
the two following conditions:
\begin{itemize}
\item\emph{Positioning Condition} ($\PoC$): for any square $wxyz$ and
  any vertex $v$, $d(v,w)+d(v,y) = d(v,x)+d(v,z)$.
\item\emph{$2$-Interval Condition $3$} ($\TiCT$): for any two vertices
  $u,v$ at distance $2$, $I(u,v)$ induces a square, a pyramid, or a
  $3$-octahedron.
\end{itemize}
In fact, Maurer's characterization also uses the \emph{link condition} that  the neighborhood of each vertex
induces the line graph of a bipartite graph, however he conjectured that this condition is redundant and this was confirmed
in~\cite{ChChOs}.

Observe that the positioning condition can be restated as follows: for
any square $S=wxyz$ and any vertex $v$, either all vertices of $S$ are
at the same distance $k$ to $v$, or two adjacent vertices of $S$ are
at distance $k$ from $v$  and the two other adjacent vertices are at
distance $k+1$ of $v$, or one vertex is at distance $k+1$ of $v$, its
neighbors are at distance $k$ of $v$, and the fourth vertex is at
distance $k-1$ of $v$. This can be written as the following FOLB query:

\begin{align*}
  \poc & \equiv \forall  v\forall w\forall x\forall y \forall z \squ(w,x,y,z)  \\
       & \quad  \implies
         \left(\B(w,z,v) \wedge \B(w,x,v) \implies \B(z,y,v) \wedge \B(x,y,v)\right) \\
       & \quad \quad
         \wedge
         \left(\B(z,w,v) \wedge \B(x,w,v) \implies \B(y,z,v) \wedge \B(y,x,v))\right).
\end{align*}

Obviously, the $2$-interval condition can also be written as a
FOLB-query $\tict$ which is long but can be obtained in a
straightforward way from the definition.

By the result of~\cite{Ch_basis}, a graph is the basis graph of an even $\Delta$-matroid if and only if it satisfies
the Positioning Condition ($\PoC$) and the following two conditions:
\begin{itemize}
\item\emph{$2$-Interval Condition $4$} ($\TiCQ$): for any two vertices
  $u,v$ at distance $2$, $I(u,v)$ contains a square and is a subgraph of a
  $4$-octahedron.
\item\emph{Link Condition} ($\LiC$): the neighborhood of each vertex
  induces the line graph of a graph.
\end{itemize}

For the same reasons as for ($\TiCT$), ($\TiCQ$) can be described by a
FOLB-query.

By a theorem of Beineke~\cite{Bei}, a graph is a line graph if and only if does not contain a graph from a
list of nine forbidden induced subgraphs
$W_5=F_0, F_1, F_2, \ldots, F_8$ (see~\cite[Fig.1]{Ch_basis}). For each $F_i$, let $F_i'$ be the
graph obtained from $F_i$ by adding a universal vertex to $F_i$.  A
graph $G$ satisfies the link condition if and only if $G$ does not
contain any $F_i'$ as an induced subgraph. Consequently, basis graphs
of matroids and of even $\Delta$-matroids can be described by the following queries:
\begin{align*}
  \matroid & \equiv \tict \wedge \poc\\
  \deltamatroid & \equiv  \ticq \wedge \poc \wedge \neg {\subgraph}_{F'_0, F'_1\ldots, F'_8}.
\end{align*}

Notice also that Johnson graphs are the basis graphs of uniform matroids. In view of the characterizations of basis graphs
of matroids, Johnson graphs are exactly the graphs satisfying ($\PoC$) and in which each 2-interval is a 3-octahedron.
Analogously, the halved cubes are the graphs satisfying the Positionning Condition, in which each 2-interval is a 3-octahedron, and
in which the neighborhood of each vertex is the complete graph. Clearly, these versions of ($\TiCT$), ($\TiCQ$), and ($\LiC$)
are FOLB-definable.

\section{Partial cubes, their subclasses and superclasses}\label{sec:partial-cubes}
In this section, we present the FOLB-definability of partial cubes and their main subclasses and super-classes, which alltogether constitute another important part of Metric Graph Theory. Subclasses of partial cubes are the following classes of graphs:  median, ample, oriented matroids, complexes of oriented matroid, bipartite Pasch, bipartite Peano, cellular, hypercellular, two-dimensional partial cubes, partial cubes of VC-dimension $d$, almost median, and semimedian graphs.  Partial Hamming graphs is a superclass
of partial cubes containing quasi-median graphs as a subclass. Partial Johnson graphs, partial halved-cubes, and $\ell_1$-graphs are three very important and general super-classes of partial cubes. Basis graphs of matroids are partial Johnson graphs and basis graphs of even $\Delta$-matroids are partial halved-cubes.

\subsection{Partial cubes and partial Hamming graphs}
The \emph{partial cubes} are the graphs which admits an isometric embedding into hypercubes. Even if at first look it seems that this class of graphs is quite
restricted, many classes of graphs arising in geometry or combinatorics are partial cubes. Already Deligne~\cite{De} noticed that the number of hyperplanes separating
two regions defined by arrangements of hyperplanes in ${\mathbb R}^d$ coincide with the distance between these regions in the graph of regions, whence these graphs
are partial cubes. Partial cubes also occur in the paper of Graham and
Pollak~\cite{GrPo} about the compact routing in graphs.

Partial cubes have been nicely characterized by Djokovi\'{c}~\cite{Dj} via the convexity of half-spaces. Namely, he proved that a graph $G=(V,E)$ is a partial cube if and only if
$G$ is bipartite and for any edge $uv$ the sets $W(u,v)=\{ x\in V: d(x,u)<d(x,v)\}$ and $W(v,u)=\{ x\in V: d(x,v)<d(x,u)\}$ are complementary halfspaces of $G$.
Djokovi\'{c} also introduced the parallelism relation between the edges of a partial cube: two edges $e=uv$ and $e'=u'v'$ of a bipartite graph $G$ are in relation $\Theta$
if $W(u,v)=W(u',v')$ and $W(v,u)=W(v',u')$. The relation $\Theta$ is reflexive and symmetric and it is shown in~\cite{Dj} that $\Theta$ is transitive (i.e., is an equivalence relation)
if and only if $G$ is a partial cube.

Djokovi\'{c}'s characterization of  partial cubes in terms of  convexity of the sets $W(u,v)$ and $W(v,u)$ leads to the FOLB-definability of partial cubes:

\begin{align*}
  \convW(u,v)
  & \equiv \forall x \forall y \forall z \B(x,u,v) \wedge \B(y,u,v) \wedge \B(x,z,y) \implies \B(z,u,v) \\
  \convnonW(u,v)
  & \equiv  \forall x \forall y \forall z (\neg \B(x,u,v)) \wedge (\neg \B(y,u,v)) \wedge \B(x,z,y) \implies (\neg \B(z,u,v)).
\end{align*}
\begin{align*}
  \pcube
  & \equiv \bipartite \wedge  \forall u \forall v \E_{\B}(u,v) \implies \convW(u,v) \wedge \convnonW(u,v) \\
  & \equiv \bipartite \wedge  \forall u \forall v \E_{\B}(u,v) \implies \convW(u,v) \wedge \convW(v,u) \\
  & \equiv \bipartite \wedge  \forall u \forall v \E_{\B}(u,v) \implies \convW(u,v).
\end{align*}

Recall that \emph{Hamming graphs} are Cartesian products of cliques. Consequently, hypercubes are binary Hamming graphs, i.e., Cartesian products of edges.
The study of \emph{partial Hamming graphs} (i.e., of graphs which are isometrically embeddable into Hamming graphs) was initiated by Winkler~\cite{Winkler}.
Answering a question raised in~\cite{Winkler}, several characterizations of partial Hamming graphs have been given in the papers~\cite{Ch_hamming} and~\cite{Wil}.
Quasi-median graphs are partial Hamming graphs.

For FOLB-definability of partial Hamming graphs, we  will use the following  characterization from~\cite{Ch_hamming}, which is in the spirit of  Djokovi\'{c}'s theorem.
For any edge $uv$ of a graph $G$, let $W_=(u,v)=\{ x\in V: d(x,u)=d(x,v)\}$.  Then the sets $W(u,v),W(v,u),$ and $W_=(u,v)$ partition the vertex-set of $G$.
According to~\cite{Ch_hamming}, a graph $G$ is a partial Hamming graph if and only of for any edge $uv$ the sets $W(u,v),W(v,u),W_{=}(u,v)$ and their complements
are convex (i.e., are halfspaces of $G$).
\begin{align*}
  \convWeq(u,v)
  & \equiv \forall x \forall y \forall z \neg (\B(x,u,v) \vee \B(x,v,u))
    \wedge \neg (\B(y,u,v) \vee \B(y,v,u)) \wedge \B(x,z,y)\\
  & \implies
    \neg (\B(z,u,v) \vee \B(z,v,u))  \\
  \convnonWeq(u,v)
  & \equiv \forall x \forall y \forall z (\B(x,u,v) \vee \B(x,v,u)) \wedge
    (\B(y,u,v) \vee \B(y,v,u))\wedge \B(x,z,y) \\
  & \implies (\B(z,u,v) \vee \B(z,v,u)).
  \end{align*}
Consequently, we obtain:
\begin{align*}
  \phamming
  & \equiv \forall u \forall v \E_{\B}(u,v) \implies \convW(u,v) \wedge
    \convW(v,u) \wedge \\
  & \quad \convWeq(u,v) \wedge \convnonW(u,v) \wedge
    \convnonW(v,u) \wedge\\
  & \quad \convnonWeq(u,v) \\
  & \equiv \forall u \forall v \E_{\B}(u,v) \implies \convW(u,v) \wedge
    \convWeq(u,v) \\
  & \quad \wedge \convnonW(u,v) \wedge \convnonWeq(u,v).
\end{align*}

An \emph{$\ell_1$-graph} is a graph which admits an isometric embedding into some ${\mathbb R}^n$ endowed
with the $\ell_1$-metric. A \emph{partial Johnson graph} is a graph which admits an isometric embedding
into a Johnson graph and a \emph{partial halved cube} is a graph which admits an isometric embedding into a halved cube.
Notice that basis graphs of matroids are partial Johnson graphs and basis graphs of even Delta-matroids are
partial halved cubes~\cite{Ch_basis}. Weakly median graphs are $\ell_1$-graphs~\cite{BaCh_wmg}. For other classes
of $\ell_1$-graphs, see the survey~\cite{BaCh_survey}.  The $\ell_1$-graphs have been characterized by
Shpectorov~\cite{Shp}, who first proved that $\ell_1$-graphs are exactly
the graphs which admit a scale $k$ embedding into a hypercube for some finite $k$. Second, he proved that a
graph $G$ is an $\ell_1$-graph if and only if $G$ is isometrically embeddable into the Cartesian product of
complete graphs, hyperoctahedra, and halved cubes. The factors and the embedding can be efficiently constructed
from $G$, which lead to a polynomial time recognition of $\ell_1$-graphs. Nevertheless the characterization of
$\ell_1$-graphs given in~\cite{Shp} cannot be used for FOLB-definability
of $\ell_1$-graphs. The characterization of partial halved cubes is also open and was formulated as an open
question already in~\cite{DeLa}.  A Djokovi\'{c}-like characterization of partial Johnson graphs was given
in~\cite{Ch_Johnson}, where it was proved that a graph $G$ is a partial Johnson graph if and only if (1) for
any edge $uv$ the set $W_=(u,v)$ consists of at most two connected components $W'_=(u,v),W''_=(u,v)$ (which
are allowed to be empty) such that $(W(u,v)\cup W'_=(u,v),W(v,u)\cup W''_=(u,v))$ and $(W(u,v)\cup W''_=(u,v),W(v,u)\cup W'_=(u,v))$
are pairs of complementary halfspaces of $G$ and (2) a specially defined atom graph of $G$ is the line graph of a
bipartite graph. While condition (1) is FOLB-definable, condition (2) (which is in the same vein as the link condition
for basis graphs of matroids, which was fortunately redundant) is not. In fact, in the last section we will show that
being a partial Johnson graph is not FOLB-definable. This is mainly due to the fact that even wheels are partial Johnson
graphs and odd wheels are not (but even and odd wheels are embeddable into halved cubes). The case of partial halved
cubes and $\ell_1$-graphs is open:

\begin{question} Are classes of partial halved cubes and $\ell_1$-graphs FOLB-definable?
\end{question}

\subsection{Ample classes, OMs, and COMs}

That median graphs are partial cubes has been shown
in~\cite{Mu}. Another class of partial cubes extending median graphs
is the class of ample/lopsided/extremal graphs, introduced
independently in \cite{BaChDrKo,BoRa,La} (using different but
equivalent definitions). The basic definition of \emph{ample graphs}
is via shattering: the vertex-sets of such graphs are set families
$\mathcal F$ and such a graph $G$ is ample if whenever some set $Y$ is
obtained by shattering by $\mathcal F$, then $G$ contains a cube whose
coordinate set is $Y$. A closer to the subject of this section, is the
following characterization of ampleness provided in~\cite{BaChDrKo}: a
subgraph $G$ of a hypercube is ample if any two parallel cubes of $G$
can be connected in $G$ by a geodesic gallery. In case of vertices,
this is just the reformulation of being a partial cube. In general,
two \emph{parallel} cubes are two cubes of $G$ having the same set of
coordinates and a \emph{geodesic gallery} is a shortest path
consisting of parallel cubes only. Other combinatorial, recursive, and
metric characterizations of ample graphs can be found
in~\cite{BaChDrKo,BoRa,La}. We will use a nice characterization of
Lawrence~\cite{La} asserting that a subgraph $G$ of a hypercube is
ample if and only if any subgraph $H$ of $G$ that is closed by taking
antipodes is either a cube or empty.  Additionally to median graphs,
ample graphs comprise other interesting subclasses, for example the
convex geometries~\cite{EdJa}, an important combinatorial structure
arising in abstract convexity.

Oriented matroids (OMs), co-invented by Bland and Las Vergnas~\cite{BlLV}  and Folkman and
Lawrence~\cite{FoLa}, represent a unified combinatorial theory of orientations of ordinary matroids.
They capture the basic properties of sign vectors representing the circuits in a directed graph
and the regions in a central hyperplane arrangement in $ {\mathbb R}^m$. Oriented matroids are systems of
sign vectors (i.e., $\{ -1,0,+1\}$-vectors) satisfying three simple axioms (composition, strong elimination, and symmetry)
and may be defined in a multitude of ways, see the book by Bj\"orner et al.~\cite{BjLVStWhZi}.
Complexes of Oriented Matroids (COMs) were introduced not long ago
in~\cite{BaChKn} as a natural common generalization of ample classes and OMs. They satisfy the same set of three axioms as OMs,
where one of the axioms (of symmetry) was relaxed (to local symmetry).
Ample classes can be seen as COMs with cubical cells, while OMs are COMs with a single cell. In general COMs, the cells are OMs glued together in such a way that
the resulting cell complex is contractible. In the realizable setting, a COM corresponds to the intersection pattern of a
hyperplane arrangement (not necessarily central) with an open convex. Notice that  realizable ample classes
correspond to the intersection pattern of  a convex set with coordinate hyperplanes of ${\mathbb R}^m$~\cite{La} and that
realizable OMs correspond to the regions in a central hyperplane arrangement in $ {\mathbb R}^m$.  COMs contains other important
classes of geometric and combinatorial objects: it is shown in~\cite{BaChKn} that linear extensions of a poset or acyclic orientations of mixed
graphs, and CAT(0) Coxeter complexes (arising in geometric group theory~\cite{Da,HaPa}) are COMs.

The \emph{topes} of an OM or a COM $\mathcal L$ are the maximal sign vectors of $\mathcal L$ with respect to the order on sign
vectors where $0<-1$ and $0<+1$. In both cases, they are $\{-1,+1\}$-vectors, thus they can be viewed as subsets of vertices of the
hypercube of dimension $m$ (the size of the ground set). The \emph{tope graphs} of OMs and COMs are the subgraphs of the hypercube $Q_m$
induced by the set of topes. Generalizing the observation of~\cite{De}, it was shown in~\cite{BjEdZi} (see also~\cite{BjLVStWhZi}) that
tope graphs of OMs are partial cubes. In fact, tope graphs of OMs are centrally-symmetric and are isometric subgraphs of $Q_m$, i.e.,
are antipodal partial cubes~\cite{BjEdZi}. A characterization of tope graphs of OMs was given in~\cite{dS}.
That tope graphs of COMs are also partial cubes has been shown in~\cite{BaChKn}.

A nice characterization of tope graphs of COMs was recently given by
Knauer and Marc~\cite{KnMa}. This characterization is nice also
because, similarly to
how COMs unify ample sets and OMs, this  result unifies the characterization of ample graphs of Lawrence~\cite{La} and the characterization of tope graphs of
OMs of da Silva~\cite{dS}. An \emph{antipodal graph} is a graph such that for any vertex $v$ there exists a vertex $v^{\bot}$ such that any vertex $x$ of $G$ is metrically between
$v$ and $v^{\bot}$, i.e.. $x\in I(v,v^{\bot})$. Then the result of~\cite{KnMa} says that a partial cube $G$ is a tope graph of a COM if and only if any antipodal
subgraph of $G$ is gated. To obtain the characterization of ample graphs of~\cite{La}, additionally we have to require that each antipodal subgraph of $G$ is a cube. To obtain the
characterization of tope graphs of OMs of~\cite{dS} one has to require that $G$ itself is antipodal.

Since antipodal graphs are necessarily intervals,  first we write two
predicates $\antipodaluv(u,v)$ and $\gated(u,v)$, which are true if and only if the
subgraph induced by $I(u,v)$ is respectively antipodal or gated. Then we write the predicate $\cube(u,v)$ which is true if and only if
$I(u,v)$ induces a cube. This uses the characterization of hypercubes as median graphs in which each 2-interval is a square, i.e., as thick median graphs~\cite{BaMu_02}.

\begin{align*}
  \antipodaluv(u,v) &\equiv \forall x \B(u,x,v)
                    \implies \left(\exists y \forall z \B(u,z,v) \iff
                    \B(x,z,y)\right) \\
  \gated(u,v) & \equiv \forall x \exists x'
                \B(u,x',v) \wedge \left(\forall y \B(u,y,v) \implies
                \B(x,x',y)\right)\\
 \cube(u,v) &  \equiv \big( \forall x \forall y \forall z \B(u,x,v) \wedge \B(u,y,v)
              \wedge \B(u,z,v) \\
                    &\quad \implies \exists m \med(x,y,z,m) \wedge
               \B(u,m,v) \big)\\
                  & \quad \wedge \big( \forall x \forall y \B(u,x,v) \wedge
                    \B(u,y,v) \wedge {\dist}_2(x,y)\\
              & \quad \implies \exists a
                    \exists b \, a\neq b \wedge \E_{\B}(a,x) \wedge
                    \E_{\B}(a,y)
\wedge \E_{\B}(b,x) \wedge \E_{\B}(b,y) \big).
\end{align*}
This leads us to the  FOLB-definability of ample graphs, tope graphs of OMs, and tope graphs of COMs:
\begin{align*}
 \com & \equiv \pcube \wedge  \forall u \forall v \antipodaluv(u,v) \implies
         \gated(u,v)  \\
  \ample & \equiv \com \wedge \forall u \forall v \antipodaluv(u,v)
           \implies \cube(u,v)\\
  \antipodalglobal & \equiv \forall x \exists y \forall z \B(x,z,y)\\
  \om & \equiv \com \wedge \antipodalglobal.
\end{align*}

\subsection{Pasch, Peano, cellular and hypercellular graphs}
Some classes of graphs in Metric Graph Theory arise from the theory of abstract convexity and are motivated
by the properties of classical (Euclidean) convexity. In previous section, we already considered graphs with convex balls and bridged graphs, which are discrete analogs of the fundamental properties of Euclidean convexity that balls are convex and that neighborhoods of convex sets are also convex.
One fundamental property of Euclidean convexity is separability: any two disjoint convex sets can be separated by complementary halfspaces and thus
can be separated by a hyperplane. Its generalization to topological vector spaces is the Hahn–Banach theorem.  One consequence of this separability
result is that any convex set is the intersection of all half-spaces containing it. Another geometric property of Euclidean convexity is that
the convex hull of a convex set $A$ and a point $x$ is the union of all segments $[x,p]$ with $p\in A$.

A \emph{convexity structure}~\cite{So,VdV} is a pair $(X,{\mathcal C})$, where $X$ is a set and $\mathcal C$ is a family of subsets of $X$ containing
$\varnothing$ and $X$, and closed by taking intersections and directed unions. The members of $\mathcal C$ are called \emph{convex subsets}.
Given $S\subset X$, the intersection of all convex subsets containing $S$ is called the \emph{convex hull} of $S$ and is denoted by $\conv(S)$.
A convexity $\mathcal C$ is called \emph{domain finite} if a set is convex if and only if
it contains the convex hull of its finite subsets and of
\emph{arity k} if a set is convex if and only if it contains the convex hull of its subsets of size at most $k$. Geodesic convexity in
graphs and, more generally, the convexity in interval spaces,  are examples of  convexity structures of arity 2.

A convexity structure is \emph{join-hull commutative (JHC)} if for any
convex set $A$ and any point $x$, $\conv(A\cup\{x\})=\bigcup_{a\in A}
\conv(a,x)$. In the case of interval spaces and graphs one can consider a
stronger version: convexity structure is \emph{join-hull commutative} if for any convex set $A$ and any point $x$, $\conv(A\cup\{x\})=\bigcup_{a\in A} I(a,x)$. The definition implies that join-hull
commutative domain finite convex structures have arity 2. Join-hull commutativity is also equivalent to the more general property
that $\conv(A\cup B)=\bigcup_{a\in A,b\in B} \conv(a,b)$~\cite{KaWo}. It was shown in~\cite{Cal}  that join-hull commutativity is
equivalent to the classical axiom in geometry, called the \emph{Peano axiom}: \emph{for any $u,v,w\in X$, for any $x\in \conv(w,v)$,
and for any $y\in \conv(u,x)$ there exists $z\in \conv(u,v)$ such that $y\in \conv(w,z)$.} If the convexity structure is defined by an interval structure, then the Peano axiom can be rewritten as:
\emph{for any $u,v,w\in X$, for any $x\in I(w,v)$, and for any $y\in I(u,x)$ there exists $z\in I(u,v)$ such that $y\in I(w,z)$.}

For convexity structures there exist several non-equivalent separability properties~\cite{So,VdV}:
\begin{itemize}
\item{$(S_2)$} any distinct two points $x,y$ can be separated by complementary halfspaces, i.e., there exist $H,X\setminus H\in {\mathcal C}$ such that $x\in H$ and $y\in X\setminus H$;
\item{$(S_3)$} any convex set $C$ and point $x\notin C$ can be separated by complementary halfspaces;
\item{$(S_4)$} any disjoint convex sets $A,B\in {\mathcal C}$ can be separated by complementary halfspaces.
\end{itemize}
$(S_4)$ is often called \emph{Kakutani separation property}~\cite{VdVS4,VdV}.
One can easily see that $(S_3)$ is equivalent to the fact that each convex set $C$ is the intersection of the halfspaces containing $C$. Ellis~\cite{Ell} proved that
if a convexity is join-hull commutativity, $(S_4)$ is equivalent to the Pasch axiom from geometry (this result was rediscovered by~\cite{VdVS4}). However not any $(S_4)$ convexity is JHC:
for example, the geodesic convexity of the graph consisting of a 4-wheel $W_4$ plus a vertex $x$ adjacent to two adjacent vertices of the 4-cycle
$C_4$ of $W_4$ is $(S_4)$  but not JHC: if the convex set $A$
consists of the edge of $C_4$ not incident to $x$, then $conv(A\cup \{ x\})$ consists of all vertices of the graphs, while $\bigcup_{a\in A} I(a,x)$
does not contain the central vertex of $W_4$.
However, it was shown in~\cite{Ch_dfc,Ch_separation} that for all convexity structures of arity 2 (in particular for geodesic convexity in graphs), the Pasch axiom is equivalent to $(S_4)$
(analogous conditions were also established for all arities).  In case of geodesic convexity in graphs, $(S_4)$ implies that the intervals are convex,
thus the Pasch axiom can be written in terms of intervals. Recall that the Pasch axiom is one of the main axioms in Tarski's and Hilbert's systems of axioms of geometry.
For convexities of arity 2, the \emph{Pasch axiom} can be formulated as follows:  \emph{for any $u,v,w\in X$, for any $x\in \conv(w,u), y\in \conv(w,v)$, there
exists $z\in \conv(u,y)\cap \conv(v,x)$.}  For graphs  this is equivalent to: \emph{for any $u,v,w\in X$, for any $x\in I(w,u), y\in I(w,v)$, there exists $z\in I(u,y)\cap I(v,x)$.}

The weakly median graphs are exactly the Pasch and also Pasch-Peano weakly modular graphs~\cite{Ch_separation}. Bipartite Pasch graphs are partial cubes and they have been
characterized in~\cite{Ch_bipartite,Ch_separation} in terms of pc-minors. It was shown in~\cite{KnMa} that bipartite Pasch graphs are tope graphs of COMs.
Median graphs are Pasch. Bipartite Peano graphs are also partial cubes~\cite{Ch_thesis,Po} and they have been investigated
in~\cite{Po}. Pasch-Peano graphs have been investigated in~\cite{BaChVdV} (see these results in the book~\cite{VdV}).

One can easily show (see~\cite{AlKn,BaS3,Ch_bipartite,Ch_thesis}) that for bipartite graphs the conditions $(S_2)$ and $(S_3)$ are equivalent
and they are both equivalent to the fact that $G$ is a partial cube. However, beyond bipartite graphs, no characterization of graphs or convexity structures
satisfying $(S_3)$ (similar to that for $(S_4)$ provided in~\cite{Ch_dfc,Ch_separation}) is known. On the other hand,  the importance of $(S_3)$ was recently highlighted
by a general result of~\cite{MoYe} showing that in  $(S_3)$ convexity structures, Radon number characterizes the existence of weak $\epsilon$-nets.
Nevertheless, under join-hull commutativity it was shown in~\cite{Ch_dfc} that $(S_3)$ is equivalent to the following Pasch-like condition, called \emph{sand-glass axiom} in~\cite{VdV}:
for any $u,v,u',v',y$ such that $y\in \conv(u,u')\cap \conv(v,v')$ and for any $x\in \conv(u,v)$ there exists $x'\in \conv(u',v')$ such that $y\in \conv(x,x')$.

Finally, we will say that a graph $G$ is a graph with \emph{convex intervals} if all intervals $I(u,v)$ of $G$ are convex sets. Graphs satisfying  separability properties $(S_3)$ and
$(S_4)$ as well as graphs satisfying (JHC) with intervals are graphs with convex intervals. Property of having convex intervals is FOLB-definable:
\begin{align*}
  \convint & \equiv \forall u\forall v\forall x\forall y \forall z \left( \B(u,x,v)\wedge \B(u,y,v)\wedge \B(x,z,y)\right)\implies \B(u,z,v).
\end{align*}

 The join hull commutativity is also FOLB-definable:
 \begin{align*}
  \jhc & \equiv \peano \equiv \forall u\forall v\forall w\forall x \forall y \left( \B(v,x,w)\wedge \B(u,y,x)\right) \implies \exists z \left(\B(u,z,v)\wedge \B(w,y,z)\right).\\
\jhc & \bipartite \equiv \peano \wedge \pcube.
\end{align*}

Finally, summarizing the results about the separability properties $(S_2),(S_3)$ and $(S_4)$, we obtain the following FOLB-definability:
\begin{align*}
\pasch & \equiv \forall u \forall v \forall w \forall x \forall y \left( \B(u,x,w)\wedge \B(v,y,w)\right)\implies \exists z \left( \B(u,z,y)\wedge \B(v,z,x)\right)\\
\sandglass & \equiv \forall u \forall v \forall u' \forall v' \forall x\forall y \left(\B(u,y,u')\wedge \B(v,y,v')\wedge \B(u,x,v)\right)\\
& \quad \implies \exists x'\left( \B(u',x',v')\wedge \B(x,y,x')\right)\\
\sepcc & \equiv \pasch\\
\sepvc\wedge\peano &\equiv \sandglass\wedge \peano\\
\paschpeano & \equiv \pasch \wedge \peano\\
\bpasch & \equiv \pasch \wedge \pcube\\
\bpeano & \equiv \peano \wedge \pcube\\
\pcube & \equiv \sepvv\wedge \bipartite \equiv \sepvc \wedge \bipartite \\
\wmedian & \equiv \wmpasch \equiv \paschpeano \wedge \wmodular.
\end{align*}

\begin{question} Are the separation properties $(S_2)$ and $(S_3)$ on graphs FOLB-definable? Can $(S_3)$ be characterized by a condition on a fixed number of vertices?
\end{question}

\emph{Cellular graphs} are the bipartite graphs whose distance is totally decomposable in the sense of Bandelt and Dress~\cite{BaDr}. Their structure
has been investigated in~\cite{BaCh_cellular}, where it is shown that in such graphs all isometric cycles are gated and that the graph itself can be obtained from its
isometric cycles via gated amalgamations. Moreover, cellular graphs are characterized in~\cite{BaCh_cellular} in terms of convexity:
those are exactly the bipartite graphs for which $\conv(S)=\bigcup_{x,y\in S}I(x,y)$ holds for any set $S$ and equivalently are exactly
the bipartite graphs for which $\conv(u,v,w)=I(u,v)\cup I(v,w)\cup I(w,u)$ for any three vertices $u,v,w$. This last characterization shows FOLB-definability of cellular graphs:
\begin{align*}
\cellular & \equiv \bipartite \wedge \forall u \forall v \forall w \forall x \forall y \forall z \big(\B(u,x,v) \vee \B(v,x,w) \vee \B(w,x,u)\big)\wedge \B(x,z,y)\\
& \quad \wedge \big( \B(u,y,v)\vee \B(v,y,w)\vee \B(w,y,u)\big)\implies \big( \B(u,z,v)\vee \B(v,z,w)\vee \B(w,z,u)\big).
\end{align*}

\subsection{Almost-median, semi-median graphs and netlike partial cubes}

Almost-median graphs and semi-median graphs  introduced in~\cite{imrich98} are two classes of partial cubes generalizing  median graphs and ample partial cubes.  Given an edge $uv$ of a partial cube $G$, the \emph{boundary} $U(u,v)$
of the halfspace $W(u,v)$ is the set of vertices of $W(u,v)$ having a
neighbor in the complementary halfspace $W(v,u)$, i.e., the vertices
of $W(u,v)$ that are incident to an edge in the $\Theta$-class of the
edge $uv$. The boundaries $U(u,v)$ and $U(v,u)$ induce isomorphic
subgraphs of $G$ and these subgraphs are isomorphic to the hyperplane
$H(uv)$ of the $\Theta$-class of the edge $uv$. The \emph{hyperplane}
$H(uv)$ has the middles of the edges of the $\Theta$-class
$\Theta(uv)$ and the middles of the edges $u'v'$ and $u''v''$ are
adjacent if $u'v'v''u''$ is a square of $G$. The hyperplanes play an
important role in the theory of median graphs viewed as CAT(0) cube
complexes. Median graphs are in fact the partial cubes in which all
boundaries are gated/convex~\cite{Ba_man}. The boundaries and the
hyperplanes of ample partial cubes are also ample and this
characterizes ample partial cubes~\cite{BaChDrKo}. Almost-median and
semi-median graphs are a generalization of median and ample classes. A
partial cube $G$ is \emph{almost-median} (respectively,
\emph{semi-median}) if all boundaries are isometric (respectively,
connected) subgraphs of $G$, i.e., they are partial cubes.  From their
definition, almost-median graphs can be described by the following
FOLB-query:
\begin{align*}
  \amedian &\equiv \pcube \wedge \forall u  \forall v \forall x' \forall
             y'\forall x'' \forall   y''
             \E_{\B}(u,v) \wedge \B(x',u,v) \wedge  \B(x'',v,u) \wedge
             \E_{\B}(x',x'') \\
           & \quad \wedge \B(y',u,v)  \wedge \B(y'',v,u)  \wedge \E_{\B}(y',y'') \wedge \neg \E_{\B}(x',y')
             \implies \exists z'\exists z'' \B(x',z',y')\\
           & \quad \wedge \B(x'',z'',y'') \wedge
             \E_{\B}(z',z'')
\end{align*}

Almost-median graphs have characterized in~\cite{Br,KlSh,Pe}.  By a
result of~\cite{KlSh} and~\cite{Pe}, a partial cube is almost-median
if and only if it satisfies the following almost quadrangle condition.
\begin{itemize}
\item\emph{Almost Quadrangle Condition} ($\AQC$): for any four
  vertices $v,x,y,u$ such that $d(v,x)=d(v,y)=d(v,u)-1$ and
  $u\sim x,y$, $x\nsim y$, there exists $z\in I(x,v)$ and
  $w \in I(u,z)$ such that $xzwu$ is a square of $G$;
\end{itemize}

The corresponding FOLB query is:
\begin{align*}
  \aqc   \equiv &  \forall v \forall x \forall y \forall u \E_{\B}(u,x)
                  \wedge \E_{\B}(u,y) \wedge \B(u,x,v) \wedge \B(x,u,y)
                  \wedge \B(u,y,v) \wedge\\
                &x \neq y \wedge (\neg \E_{\B}(x,y)) \implies \exists z \exists w  \B(x,z,v)
                  \wedge  \squ(u,x,z,w)
\end{align*}
Consequently almost-median graphs can also be described by the
following FOLB-query.
\begin{align*}
  \amedian &\equiv  \pcube \wedge \aqc\\
\end{align*}

\begin{question}
  Are semi-median graphs FOLB-definable?
\end{question}

Netlike partial cubes form another class of partial cubes that is defined by some property of its boundaries.

For a graph $H$, we denote by $CV(H)$ the set of vertices belonging to
a cycle of $H$. We denote also $3V(H)$ the set of vertices of degree
at least $3$ in $H$.  Given a subset $A$ of the vertices of a graph
$G$, we denote by $I(A)$ the subgraph induced by the set
$\bigcup_{x,y \in A}I(x,y)$. Notice that the convex hull $\conv(A)$ of
$A$ can be obtained by iterating applications of the operator $I$ and
that $I(\conv(A)) = \conv(A)$. Finally, a set $A$ of vertices of $G$
is \emph{ph-stable}~\cite{Po1} if for all $u,v \in I(A)$, there exists
$w \in A$ such that $v \in I(u,w)$.  A set $A$ of vertices is
\emph{$\C$-convex} if $CV(I(A)) \subseteq A$. Analogously, a set $A$
of vertices is \emph{degree-$3$-convex} if $3V(I(A)) \subseteq A$.
According to Polat~\cite{Po1}, a partial cube $G$ is \emph{netlike} if
for every edge $uv$, the boundaries $U(u,v)$ and $U(v,u)$ are $\C$-convex.
Median graphs, cellular bipartite graphs and benzenoids are example of
netlike partial cubes. By~\cite[Theorem 3.8]{Po1}, a partial cube $G$
is netlike if and only if for each edge $uv$ of $G$, the boundaries
$U(u,v)$ and $U(v,u)$ are ph-stable and degree-$3$-convex. This
characterization allows to show that netliked partial cubes are
FOLB-definable.

\begin{align*}
  \boundary(u,v,z) & \equiv \exists z' \B(v,u,z) \wedge \B(u,v,z') \wedge \E_{\B}(z,z')\\
  \intBoundary(u,v,z) & \equiv \exists z' \exists z'' \boundary(u,v,z') \wedge \boundary(u,v,z'')\wedge \B(z',z,z'')\\
  \phstable(u,v) & \equiv \forall x \forall y \intBoundary(u,v,x) \wedge \intBoundary(u,v,y)\implies \exists w \boundary(u,v,x) \wedge \B(x,y,w)\\
  \degTconv(u,v) & \equiv \forall z \forall y_1 \forall y_2 \forall y_3 \intBoundary(u,v,z) \wedge \intBoundary(u,v,y_1) \wedge \intBoundary(u,v,y_2)\\
  & \quad \wedge \intBoundary(u,v,y_3)\wedge \E_{\B}(z,y_1) \wedge \E_{\B}(z,y_2) \wedge \E_{\B}(z,y_3) \wedge y_1 \neq y_2\\
  & \quad  \wedge y_1 \neq y_3 \wedge y_2 \neq y_3\implies \boundary(u,v,z) \\
  \netpcube & \equiv \pcube \wedge \forall u \forall v \left(\E_{\B}(u,v) \implies \phstable(u,v) \wedge \degTconv(u,v)\right)
\end{align*}

For other results on netlike partial cubes, see the series of papers of Polat~\cite{Po1,Po2,Po3,Po4}.

\section{Gromov hyperbolic graphs and their subclasses}\label{sec:hyperbolicity}

\subsection{Hyperbolic graphs} Hyperbolic metric spaces and Hyperbolic
graphs have been defined by Gromov~\cite{Gr} and is a fundamental
object of study in geometric group theory.  A graph $G$ is
$\delta$--\emph{hyperbolic}~\cite{BrHa,Gr} if for any four vertices
$u,v,x,y$ of $X$, the two larger of the three distance sums
$d(u,v)+d(x,y)$, $d(u,x)+d(v,y)$, $d(u,y)+d(v,x)$ differ by at most
$2\delta \geq 0$. The hyperbolicity of a graph $G$ is
$\delta^*(G) = \inf \{\delta: \text{$G$ is $\delta$-hyperbolic}\}$.
For graphs (and geodesic metric spaces), $\delta$--hyperbolicity can
be defined (up to a constant factor) as spaces in which all geodesic
triangles are $\delta$--slim.  In a graph $G$ a \emph{geodesic
  triangle} $\delta(x,y,z)$ is a triplet of vertices $x,y,z$ and a
triplet of geodesics (shortest paths) $[x,y]$, $[x,z]$, and $[y,z]$.
A geodesic triangle $\Delta(x,y,z)$ is called $\delta$--{\it slim} if
for any point $u$ on the geodesic $[x,y]$ the distance from $u$ to
$[x,z]\cup [z,y]$ is at most $\delta$. If a graph $G$ has
$\delta$-slim triangles, then $G$ is $2\delta$-hyperbolic and
conversely if $G$ is $\delta$-hyperbolic, then $G$ has $3\delta$-slim
triangles (see~\cite{BrHa,GhHa}).  There are many other
characterizations of $\delta$-hyperbolicity~\cite{BrHa,Gr,GhHa} such
as characterizations via $\delta$-thin triangles, thinness of
intervals, or linear isoperimetric inequality.

We give a new ``definition'' of hyperbolicity that relaxes the
slimness of geodesic triangles.  We say that a graph $G$ is
\emph{interval-$\delta$-slim} if for any triplets of vertices $x,y,z$
and for each $u \in I(y,z)$, there exists $v \in I(x,y) \cup I(x,z)$
such that $d(u,v) \leq \delta$.

\begin{lemma}\label{lem-int-delta}
  If $G$ is interval-$\delta$-slim, then $G$ has $3 \delta$-slim
  triangles and $G$ is $6\delta$-hyperbolic.

  Conversely, if $G$ is $\delta$-hyperbolic, then $G$ has
  $3\delta$-slim triangles and $G$ is interval-$3\delta$-slim.
\end{lemma}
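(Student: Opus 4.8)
The plan is to prove the two directions separately and to offload most of the work onto the implications already quoted in the text: that $\eta$-slim triangles give $2\eta$-hyperbolicity, and that $\delta$-hyperbolicity gives $3\delta$-slim triangles.

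For the forward direction the only real content is to show that interval-$\delta$-slimness forces $3\delta$-slim triangles; the bound ``$6\delta$-hyperbolic'' then follows at once from the cited implication ``$\eta$-slim $\Rightarrow 2\eta$-hyperbolic'' applied with $\eta = 3\delta$. So I would fix an arbitrary geodesic triangle with sides $[x,y]$, $[y,z]$, $[x,z]$ and a vertex $u$ on $[y,z]$, and aim to find a vertex of $[x,y]\cup[x,z]$ within $3\delta$ of $u$. Since $u \in I(y,z)$, interval-slimness applied to the triple $x,y,z$ yields $v \in I(x,y)\cup I(x,z)$ with $d(u,v)\le\delta$; without loss of generality $v\in I(x,y)$.

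The main obstacle is exactly the mismatch between the two notions: $v$ lies in the \emph{interval} $I(x,y)$, i.e.\ on \emph{some} shortest $(x,y)$-path, but not necessarily on the prescribed geodesic $[x,y]$ appearing in the triangle. I would close this gap by invoking interval-slimness a \emph{second} time, now for the triple $v,x,y$. Let $w$ be the vertex of the fixed geodesic $[x,y]$ at distance $m := d(x,v)$ from $x$ (this exists because $v\in I(x,y)$ forces $0\le m\le d(x,y)$). As $w\in[x,y]\subseteq I(x,y)$, interval-slimness produces $v'\in I(v,x)\cup I(v,y)$ with $d(w,v')\le\delta$. In either case a short triangle-inequality computation using the interval identities pins down $d(v,v')$: for instance if $v'\in I(v,x)$ then $d(x,v')=m-d(v,v')$ while $d(x,v')\ge d(x,w)-d(w,v')\ge m-\delta$, so $d(v,v')\le\delta$, and symmetrically if $v'\in I(v,y)$. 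Hence $d(v,w)\le d(v,v')+d(v',w)\le 2\delta$, so $v$ is within $2\delta$ of the fixed geodesic $[x,y]$, and therefore $d(u,[x,y]\cup[x,z])\le d(u,v)+d(v,w)\le 3\delta$, establishing $3\delta$-slimness.

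For the converse, suppose $G$ is $\delta$-hyperbolic. That $G$ has $3\delta$-slim triangles is precisely the cited implication, so no new argument is needed there. For interval-$3\delta$-slimness, take any triple $x,y,z$ and any $u\in I(y,z)$. Then $u$ lies on some shortest $(y,z)$-path; extend this to a geodesic triangle by choosing arbitrary geodesics $[x,y]$ and $[x,z]$. Applying $3\delta$-slimness to the side through $u$ gives a vertex $v\in[x,y]\cup[x,z]$ with $d(u,v)\le 3\delta$, and since $[x,y]\subseteq I(x,y)$ and $[x,z]\subseteq I(x,z)$ we have $v\in I(x,y)\cup I(x,z)$, which is exactly what interval-$3\delta$-slimness requires.
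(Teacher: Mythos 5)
Your proof is correct and follows essentially the same route as the paper's: both directions hinge on the same two applications of interval-slimness (once to get $v$ near $u$, once to show $v$ is within $2\delta$ of the point $w$ on the fixed geodesic at the same distance from $x$), and the converse is handled identically via the slim-triangles implications. The only cosmetic difference is that the paper isolates your second application as a standalone observation that interval-$\delta$-slim graphs have $2\delta$-thin intervals, whereas you inline the computation.
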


\begin{proof}
  Suppose that $G$ is interval-$\delta$-slim. Observe first that $G$
  has $2\delta$-thin intervals, i.e., for any $u, v \in I(x,y)$ such
  that $d(u,x) = d(v,x)$, we have $d(u,v) \leq 2\delta$. Indeed
  consider the triplet $x,y,v$ and the vertex $u \in I(x,y)$. Since
  $G$ is interval-$\delta$-slim, there exists a vertex
  $t \in I(v,x) \cup I(v,y)$ with $d(u,t)\leq \delta$. Without loss of
  generality, suppose that $t \in I(v,x)$. Consequently,
  $d(v,t)+d(t,x) = d(v,x) = d(u,x) \leq d(u,t) + d(t,x)$ and thus
  $d(v,t) \leq d(u,t) \leq \delta$.  Now pick any geodesic triangle
  $\Delta(x,y,z)$ and $u \in [y,z]$. Since $G$ is
  interval-$\delta$-slim, there exists $v \in I(x,y)\cup I(x,z)$ with
  $d(u,v) \leq \delta$. Let $v \in I(x,y)$ and let $w \in [x,y]$ such
  that $d(x,v) = d(x,w)$. By the $2\delta$-thinness of the intervals,
  $d(v,w) \leq 2\delta$ and thus $d(u,w) \leq 3\delta$ by the triangle
  inequality. Consequently, $G$ has $3\delta$-slim triangles and is
  $6\delta$-hyperbolic since graphs with $\delta$-slim triangles are
  $2\delta$-hyperbolic.

  Conversely, if $G$ has $\delta$-slim triangles, then trivially $G$
  is interval-$\delta$-slim and the second assertion of the theorem
  follows from the fact that $\delta$-hyperbolic graphs have
  $3\delta$-slim triangles.
\end{proof}

It seems challenging to find a FOLB-query characterizing the graphs whose hyperbolicity is at most $\delta$.
However, we can characterize in FOLB
interval-$\delta$-slim graphs by the following query.

\begin{align*}
  {\intDeltaSlim}_{\delta} & \equiv  \forall x \forall y \forall z \forall u
                         \B(y,u,z) \implies \exists v (\B(x,v,y) \vee
                         \B(x,v,z)) \wedge {\dist}_{\leq \delta}(u,v)  \\
\end{align*}

By Lemma~\ref{lem-int-delta}, for any graph $G$, if $G$ satisfies
${\intDeltaSlim}_{\delta}$ then $\delta^*(G) \leq 6\delta$ and if $G$
does not satisfy ${\intDeltaSlim}_{\delta}$, then
$\delta^*(G) > \delta/3$. When the hyperbolicity $\delta^*(G)$ of $G$
is between $\delta/3$ and $6\delta$, $G$ can satisfy the query
${\intDeltaSlim}_{\delta}$ or not.

\subsection{Subclasses of hyperbolic graphs} In geometric group theory several types of hyperbolic graphs and
complexes are investigated. This is due to the fact that the groups acting geometrically
on a hyperbolic graph satisfy strong properties~\cite{BrHa,GhHa,Gr}. Among such classes of graphs are the curve graphs and the arc graphs.
The \emph{curve graph} of a compact oriented surface $S$ is the graph whose vertex set is the
set of homotopy classes of essential simple closed curves and whose edges correspond to
disjoint curves. The \emph{arc graph} of $S$  is the subgraph of the curve graph induced
by the vertices that are homotopy classes of arcs. It was proved in~\cite{MaMi} and~\cite{MaSc} that
curve graphs and arc graphs are hyperbolic, i.e., they have finite hyperbolicity (notice that these graphs are not
finite, they are not even locally finite). Later work by various authors gave alternate proofs of this fact and better
information on the hyperbolicity of the curve complex~\cite{Au,
  Bow}. Finally, it was shown in~\cite{HePrWe} that arc graphs
are 7-hyperbolic and that  curve graphs are 17-hyperbolic. Unfortunately, there is no characterization of arc graphs and
curve graphs, therefore their FOLB-definability is an open (and probably difficult) question. Hyperbolic systolic and CAT(0) cubical complexes
have been also investigated in geometric group theory and metric graph theory. For example, it was shown in~\cite{JaSw} that  7-systolic
complexes (i.e., systolic complexes not containing 6-wheels) are 11-hyperbolic. Later, in~\cite{ChDrEsHaVa} it was shown that in fact
they are 1-hyperbolic. It was shown in~\cite{ChDrEsHaVa} and~\cite{Ha} that a median graph $G$ (a 1-skeleton of a CAT(0) cube complex) is
$\delta$-hyperbolic if and only if $G$ does not contain isometrically embedded square  $\delta\times \delta$-grids. This kind of characterization
of hyperbolicity in terms of grids (triangular or square)  was generalized in~\cite{CCHO,ChDrEsHaVa} to all weakly modular graphs and
a sharp characterization of $\delta$-hyperbolic Helly graphs was obtained in~\cite{DrGa}.

Notice that trees are $0$-hyperbolic. Moreover, the $0$-hyperbolic
graphs are exactly the \emph{block-graphs}, i.e., the graphs in which
all blocks (2-connected components) are complete graphs. The next
hyperbolicity constant is $\frac{1}{2}$. The graphs whose
hyperbolicity is at most $\frac{1}{2}$ have been characterized in
\cite{BaCh_hyp} (where they are called $1$-hyperbolic graphs): these
are exactly the graphs with convex balls not containing 6 isometric
subgraphs $H_a, H_b, H_c, H_d, H_e, H_f$ (see Fig. 2 of
\cite{BaCh_hyp}), which we denote by $H_1,\ldots,H_6$.  The
1-hyperbolic graphs have not yet been characterized. This class
contains chordal graphs, 7-systolic graphs, and distance hereditary
graphs. A graph $G$ is \emph{distance hereditary}~\cite{BaMu_dh,Ho_dh}
if any induced subgraph of $G$ is an isometric subgraph, i.e., if any
induced path is a shortest path. Distance hereditary graphs have been
characterized in various ways in~\cite{BaMu_dh,Ho_dh}. For example, it
was shown in~\cite{BaMu_dh} that a graph $G$ is distance hereditary if
and only if $G$ does not contain the following graphs as induced
subgraphs: the cycles $C_k, k\ge 5$, the 3-fan, the house, and the
domino (see Fig. 1-3 of~\cite{BaMu_dh}).  Also it was shown in
\cite{BaMu_dh} that distance hereditary graphs are exactly the graphs
in which for any four vertices $u,v,x,y$ among the three distance sums $d(u,v)+d(x,y)$, $d(u,x)+d(v,y)$, $d(u,y)+d(v,x)$
at least two are equal and if the two smallest distance sums are
equal, then the largest differ from them by at most 2. This shows that
distance hereditary graphs are 1-hyperbolic. For FOLB-definability of
distance hereditary graphs we will use the following characterization
from~\cite{BaMu_dh}: for any three vertices $u,v,w$ at least two of
the following inclusions hold:
$I(u,v)\subseteq I(u,w)\cup I(w,v), I(u,w)\subseteq I(u,v)\cup
I(v,w)$, and $I(v,w)\subseteq I(v,u)\cup I(u,w)$. A subclass of
distance hereditary graphs is constituted by \emph{ptolemaic graphs}
\cite{Ho_pt,KaCh}.  Those are the graphs which verify the Ptolemaic
inequality from Euclidean geometry: for any four vertices $u,v,x,y$
$d(u,v)\cdot d(x,y)\le d(u,x)\cdot d(v,y)+d(u,y)\cdot d(v,x)$. It was
shown in~\cite{BaMu_dh} that the ptolemaic graphs are exactly the
chordal distance hereditary graphs and thus are exactly the chordal
graphs not containing the 3-fan $3F$, and they are distance hereditary
graphs without $C_4$ and $3F$.  Notice that ptolemaic graphs are exactly
the graphs whose geodesic convexity satisfies the Krein-Milman property
that any convex set is the convex hull of its extremal vertices~\cite{So_KM}.
Notice also  that block graphs are exactly
the ptolemaic graphs not containing an induced $K_4^-$.

We conclude with the definition of graphs with $\alpha_i$-metrics,
which been introduced and studied in~\cite{Ch_tr}. A graph $G$ has an
\emph{$\alpha_i$-metric}~\cite{Ch_tr} if for any edge $vw$ of $G$ and
any two vertices $u,x$ such that $v\in I(u,w)$ and $w\in I(v,x)$, the
inequality $d(u,x)\ge d(u,v)+d(w,x)+d(w,v)-i$ holds.  The motivation
again comes from Euclidean geometry: if $v,w$ are two close points and
we extend the segment $[v,w]$ to a segment $[u,w]$ via $v$ and to a
segment $[v,x]$ via $w$, then the points $v,w$ belong to the segment
$[u,x]$. Or more informally, if we shoot a ray from $w$ trough $v$ and
a ray from $v$ through $w$, then these two rays will define a
line. Therefore, the $\alpha_i$-metric shows how $d(u,x)$ is close to
$d(u,v)+d(v,w)+d(w,x)$. It was shown in~\cite{Ch_tr} that ptolemaic
graphs are exactly the graphs with $\alpha_0$-metrics and that chordal
graphs are graphs with $\alpha_1$-metric. The graphs with
$\alpha_1$-metrics have been characterized in~\cite{YuCh}: these are
exactly the graphs with convex balls not containing the graph $H_c$ as
an isometric subgraph (where $H_c$ is a graph from the list of
forbidden subgraphs for $\frac{1}{2}$-hyperbolicity of
\cite{BaCh_hyp}). It will be interesting to investigate in more
details the structure and the characterizations of graphs with
$\alpha_i$-metrics as has been done for hyperbolic graphs. Since
Euclidean spaces have $\alpha_0$-metrics it is clear that
$\alpha_0$-metrics are not hyperbolic. For graphs, the links between
$\delta$-hyperbolic graphs and graphs with $\alpha_i$-metrics are less
clear.

Now, we have collected all necessary material to present the FOLB-definability of all previously defined classes of graphs. The interval functions of block graphs,
ptolemaic graphs, and distance hereditary graphs have been provided in~\cite{block,ptolemaic,DHG}; our characterizations of those classes are different.

\begin{align*}
  \block & \equiv {\hyperbolic}_{0 } \equiv {\intDeltaSlim}_{0,0} \equiv \pt \wedge \neg {\subgraph}_{K_4^-} \\
  {\hyperbolic}_{\frac{1}{2}} & \equiv \cballs \wedge \neg {\isometric}_{H_a,H_b,H_c,H_d,H_e,H_f} \\
  {\alpha_0-\metric} & \equiv \pt \\
  {\alpha_1-\metric} & \equiv \cballs \wedge \neg {\isometric}_{H_c} \\
  \intIncl(u,v,w) & \equiv \forall x \B(u,x,v) \implies \B(u,x,w) \wedge \B(v,x,w) \\
  \dhg & \equiv \forall u \forall v \forall w
         (\intIncl(u,v,w) \wedge   \intIncl(u,w,v))\\
&         \vee (\intIncl(u,v,w) \wedge   \intIncl(v,w,u))
         \vee (\intIncl(u,w,v) \wedge   \intIncl(v,w,u))\\
  \pt & \equiv \dhg \wedge \neg {\subgraph}_{C_4,3F}\\
\end{align*}

Notice that the condition $\alpha_0$ is easily FOLB-definable: for any adjacent vertices $v,w$ of $G$ and
any two vertices $u,x$ such that $v\in I(u,w)$ and $w\in I(v,x)$ we have $v,w\in I(u,x)$. However, we do not know if
the class of graphs with $\alpha_i$-metrics with $i\ge 2$ is FOLB-definable.

Summarizing the results of the last three sections, we obtain the following result about classes of graphs from Metric Graph Theory, which are FOLB-definable:

\begin{theorem}\label{theorem1} The following classes of graphs are FOLB-definable:

\begin{itemize}
\item[(a)] weakly modular, modular, quasi-modular, pseudo-modular, strongly modular, sweakly modular;
\item[(b)] bipartite, median, pseudo-median, quasi-median, weakly median;
\item[(c)] bridged, weakly bridged, bucolic, graphs with convex balls, graphs with $\alpha_1$-metric;
\item[(d)] Helly, clique Helly, dual polar;
\item[e)] meshed, basis graphs of matroids, basis graphs of $\Delta$-matroids;
\item[(f)] partial cubes, partial Hamming graphs, ample, tope graphs of OMs and COMs;
\item[(g)] Pasch, graphs with $S_4$ separability, Peano, join-hull commutative, graphs with convex intervals,
graphs with sand-glass property, bipartite graphs with $S_2$, bipartite graphs with $S_3$;
\item[(h)] cellular, hypercellular, almost median, netlike partial cubes;
\item[(i)] graphs with interval $\delta$-slim triangles,  0-hyperbolic and $\frac{1}{2}$-hyperbolic graphs,
distance hereditary, ptolemaic, block graphs.
  \end{itemize}
\end{theorem}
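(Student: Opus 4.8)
The statement is a summary of the constructions carried out in Sections~\ref{sec:FOLB}--\ref{sec:hyperbolicity}, so the plan is not to prove anything new but to assemble, for each listed class, the explicit FOLB-query already exhibited and to certify that it is a bona fide FOLB-formula defining exactly that class. The common starting point is the remark following axioms (IB1)--(IB7): by Theorem~\ref{graphic-Mulder-Nebesky}, a $\sigma$-structure $(V,B)$ satisfying these axioms is the graphic interval space of a connected graph $G_{\B}$, and $\B(u,x,v)$ holds precisely when $x\in I_{G_{\B}}(u,v)$. Hence in FOLB one may freely quantify over vertices and use the betweenness predicate $B$, the derived adjacency predicate $\E_{\B}$, and equality; moreover every FOL-definable query is FOLB-definable, so the distance predicates $\dist_{k}$, $\dist_{\le k}$ and the induced- and isometric-subgraph queries $\subgraph_{H}$ and $\isometric_{H}$ (for fixed finite $H$) are available as building blocks.

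With these primitives, the verification splits according to the shape of the characterization used. First I would treat the classes defined by local metric conditions on (metric) triangles and quadrangles: weakly modular, modular, pseudo-modular, and meshed graphs are captured by the queries $\tc$, $\qc$, $\mtriangle$, $\stronglyequilateral$ and by the formula built from the meshedness Lemma proved just above; bridged, weakly bridged, graphs with convex balls, and graphs with $\alpha_1$-metric combine these with the triangle--pentagon and interval--neighbourhood conditions $\tpc$, $\inc$ and with forbidden isometric subgraphs. Next I would collect the classes defined as a weakly-modular (or modular) base together with finitely many forbidden finite configurations---quasi-modular, quasi-median, pseudo-median, weakly median, bucolic, strongly modular, sweakly modular, dual polar, median, clique-Helly, and Helly graphs---for which the base query is conjoined with the negation of a $\subgraph$- or $\isometric$-query for the forbidden list (and, for Helly graphs, the clique-Helly predicate via the Berge--Duchet/Szwarcfiter condition together with the predicate $\cqwq$).

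For the partial-cube world I would invoke Djokovi\'{c}'s characterization, rendered as convexity of the halfspaces $W(u,v)$ through $\convW$, and build on it the queries for partial Hamming graphs (adding the $W_{=}$-halfspaces), the Pasch/Peano/sand-glass/separability classes (via their interval reformulations $\convint$, $\peano$, $\pasch$, $\sandglass$), the cellular, almost-median ($\aqc$), and netlike ($\phstable$, $\degTconv$) classes, and the tope graphs of OMs and COMs through the auxiliary predicates $\antipodaluv$, $\gated$, $\cube$ and the Knauer--Marc antipodal-gated characterization. Finally, for the hyperbolic subclasses I would use Lemma~\ref{lem-int-delta} for the interval-$\delta$-slim query and the forbidden-isometric-subgraph characterizations (together with convex balls) for $0$- and $\tfrac12$-hyperbolic, distance-hereditary, ptolemaic, and block graphs, the last three also admitting the interval-inclusion formulation $\intIncl$.

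The substance of the proof---and the main obstacle---is not the Boolean bookkeeping but the choice of characterization for each class: many of these families have primary definitions that are provably \emph{not} first-order (forbidding isometric cycles of unbounded length, isometric embeddability into a hypercube, total decomposability of the metric, the Helly property for the entire ball hypergraph, and so on), exactly as for $\acyclicity$. For each class one must therefore (i)~cite the equivalent characterization from the literature (Mulder--Nebesk\'y, Bandelt--Chepoi, \cite{CCHO}, Djokovi\'{c}, Knauer--Marc, Polat, Bandelt--Mulder, and the rest) whose defining condition quantifies over only a bounded number of vertices and speaks solely of intervals and betweenness, and (ii)~check that the displayed auxiliary predicates faithfully translate that condition. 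The genuinely delicate verifications are those where the translation is itself nontrivial: that $\antipodaluv(u,v)$, $\gated(u,v)$, and $\cube(u,v)$ really express ``$I(u,v)$ induces an antipodal, a gated, or a cube subgraph'' (so that Knauer--Marc's theorem yields the COM, ample, and OM queries); that the meshedness Lemma correctly reduces ($\QC^-$) to a bounded metric-triangle condition; that $\phstable$ and $\degTconv$ capture Polat's ph-stability and degree-$3$-convexity of the boundaries $U(u,v)$, so that \cite[Theorem~3.8]{Po1} gives the netlike partial cube query; and that Lemma~\ref{lem-int-delta} legitimately interposes the FOLB-expressible interval-$\delta$-slimness between the (apparently non-FOLB) hyperbolicity bounds. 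Once these equivalences are in hand, each item of the list is immediate, and the theorem follows by conjunction over the finitely many named classes.
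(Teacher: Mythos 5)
Your proposal is correct and follows essentially the same route as the paper: Theorem~\ref{theorem1} is a summary statement whose proof consists precisely of the explicit FOLB-queries constructed in Sections~\ref{sec:FOLB}--\ref{sec:hyperbolicity}, each resting on a bounded-quantifier characterization from the literature (Mulder--Nebesk\'y, Djokovi\'{c}, Knauer--Marc, Polat, the meshedness lemma, Lemma~\ref{lem-int-delta}, etc.), exactly as you describe. Your identification of the delicate points --- that the auxiliary predicates faithfully translate the cited characterizations rather than the classes' primary (non-first-order) definitions --- matches where the paper itself invests its effort.
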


In the next section, we will prove that some classes of graphs defined in previous sections are not FOLB-definable. On the other hand, we do not know if the following classes of graphs
are FOLB-definable: \emph{partial halved cubes, $\ell_1$-graphs, $S_2$ graphs, $S_3$ graphs, semi-median graphs, and graphs of hyperbolicity at most $\delta$.}

\section{Graph classes not expressable in FOLB}\label{notfolb}

In the previous sections we have seen that several queries as
$\acyclicity$ and $\bipartite$ are not FOL-definable but are
FOLB-definable. In this section, we prove that several queries remain
not definable in the extended logic FOLB. Recall that a graph $G$ is
\emph{chordal} if $G$ does not contain induced cycles of length $>3$.
A graph is \emph{Eulerian} if it has a circuit traversing each edge
exactly once; recall that a graph $G$ is Eulerian if and only if $G$
is connected and all vertices of $G$ have even degree. A graph $G$ is
\emph{planar} if $G$ admits a drawing in the plane such that two
non-incident edges do not intersect and two incident edges intersect
only in their common end. By Kuratowski's theorem, a graph $G$ is
planar if an only of $G$ does not have $K_{3,3}$ and $K_5$ as a minor.
A graph $G$ is \emph{dismantlable} if the vertices of $G$ can be
linearly ordered $v_1,\ldots,v_n$ such that for each $1<i\le n$ the
vertex $v_i$ is dominated by a vertex $v_j, j<i$ in the subgraph $G_i$
induced by $\{ v_1,\ldots,v_i\}$; $v_i$ is \emph{dominated} by $v_j$
if $v_j$ is a neighbor of $v_i$ and all other neighbors of $v_i$ in
$G_i$ are adjacent to $v_j$ (the order $v_1,\ldots,v_n$ is called a
\emph{dismantling ordering}). A graph $G$ admits a \emph{distance
  preserving ordering} (DPO for short) if its vertices can be linearly
ordered $v_1,\ldots,v_n$ such that for each $1<i\le n$ the subgraph
$G_i$ induced by $\{ v_1,\ldots,v_i\}$ is an isometric subgraph of
$G$.  Any dismantling ordering is a distance preserving ordering but
the converse is not true.  Finally, as defined above, recall that a
graph $G$ is a \emph{partial Johnson graph} if $G$ isometrically
embeds into a Johnson graph.

In this section, we consider the following queries:
\begin{itemize}
\item The $\chordal$ query is the Boolean query such that $\chordal(\bG)=1$ iff $\bG$ is a chordal graph;
\item The $\planar$ query is the Boolean query such that $\planar(\bG)=1$ iff $\bG$ is a planar graph;
\item The $\eulerian$ query  is the Boolean query such that $\eulerian(\bG)=1$ iff $\bG$ is an Eulerian graph;
\item The  $\dismantlable$ query is the Boolean query such that $\dismantlable(\bG)=1$ iff $G$ is a dismantlable graph;
\item The $\dpo$ query is the Boolean query such that $\dpo(\bG)=1$ iff $G$ admits a distance preserving ordering;
\item The $\partialJ$ query  is the Boolean query such that $\partialJ(\bG)=1$ iff $\bG$ is a partial Johnson graph.
\end{itemize}

Planar and Eulerian graphs are two classical classes of graphs
~\cite{We}. Chordal graphs constitute an important class of graphs in
algorithmic graph theory due to their connection to tree
decompositions~\cite{Go}. Chordal graphs are bridged and
1-hyperbolic. Chordal graphs are exactly the graphs admitting perfect
elimination orderings, i.e., linear orders $v_1,\ldots,v_n$ such that
for each $1<i\le n$ the neighbors of the vertex $v_i$ in $G_i$ induce
a clique. Perfect elimination orderings are dismantling and distance
preserving orders.  Dismantlable graphs are exactly the cop-win
graphs~\cite{NoWi}, i.e., the finite graphs in which the cop can
always catch the robber.  It is known that several classes of graphs
arising in Metric Graph Theory are dismantlable:
bridged~\cite{AnFa,Ch_bridged} and weakly bridged graphs~\cite{ChOs},
squares of graphs with convex balls~\cite{ChChGi}, and Helly
graphs~\cite{BaPe}. Dismantlability of graphs can be viewed as a
strong form of collapsibility of their clique complexes and can be
viewed as a tool to prove contractibility of such complexes. Deciding
if a graph $G$ is dismantlable is easy because it reduces to
recursively finding dismantlable vertices and removing them. Graphs
admitting distance preserving orderings have been introduced in
\cite{Ch_dpo}, where it was shown that several classes of graphs admit
DPO. In~\cite{CCHO} it is shown that all weakly modular graphs admit
DPO.  As shown in~\cite{CCMW}, the partial cubes admitting a distance
preserving ordering are exactly the ample partial cubes that admit a
corner peeling. Finally, it was proved in~\cite{CoDuNiSo} that the
problem of deciding if a graph has such a DPO is NP-complete.

\begin{proposition}\label{notFOLB} The following queries $\chordal, \planar, \dismantlable,\partialJ$, and $\eulerian$ are not FOLB-definable.
\end{proposition}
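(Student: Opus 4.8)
The plan is to apply the Ehrenfeucht--Fra\"iss\'e method exactly as packaged in Theorem~\ref{EhrFra_bis}: for each query $Q$ among $\chordal,\planar,\dismantlable,\partialJ,\eulerian$ and each $r$, I would exhibit two connected graphs $\bG_r$ and $\bG'_r$, viewed as betweenness structures $(V,B)$, with $Q(\bG_r)=1$ and $Q(\bG'_r)=0$, on which the Duplicator wins the $r$-move game. The guiding principle is to make $\bG_r$ and $\bG'_r$ \emph{locally isomorphic} as betweenness structures while letting them differ only in one global parameter---a parity, a topological twist, or the exact length of a single long induced cycle---that no sentence of quantifier rank $r$ can read off. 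Because the betweenness relation is the metric in disguise ($\B(u,x,v)$ means $d(u,x)+d(x,v)=d(u,v)$), the Duplicator's obligation is stronger than in ordinary FOL on graphs: the partial map must preserve all distance-sum relations among the chosen points, not merely adjacency, so the whole difficulty is concentrated in arranging the pairs and analysing this game.

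For $\partialJ$ I would follow the remark already recorded in the paper, that even wheels are partial Johnson graphs and odd wheels are not, and take $\bG_r=W_{2k}$, $\bG'_r=W_{2k+1}$ with $k$ large compared to $r$. These have diameter $2$, all rim vertices of degree $3$, and a single hub; crucially, two rim vertices at rim-distance $\ge 3$ are joined only through the center, so \emph{long-range} betweenness is routed through the hub and never sees the parity of the rim. The Duplicator matches hub to hub and then plays the rim cycles $C_{2k}$ and $C_{2k+1}$, keeping the chosen rim points of the same ``distance-$\le 2$ type'' and otherwise far apart, which suffices since only rim-distances $1$ and $2$ are visible in $B$. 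The same hub-routing idea handles $\eulerian$: I would use the bipyramids (double wheels) over an even, resp.\ odd, rim cycle, with two non-adjacent apexes. Each rim vertex then has degree $4$ and each apex degree $2k$, resp.\ $2k+1$, so one is Eulerian and the other is not, while the parity lives only in the apex degree, which cannot be counted in $r$ moves, and the betweenness structure is identical up to the rim length. For $\planar$ I would use the classical locally indistinguishable pair: the prism $C_n\,\square\,K_2$ (planar) against the M\"obius ladder $M_n$ (which contains $K_{3,3}$ as a minor, hence is non-planar). Both are $3$-regular ladders agreeing on every bounded patch, differing only by the global twist, and the Duplicator wins by matching local ladder-patches while keeping the played vertices pairwise far.

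The delicate cases are $\chordal$ and $\dismantlable$, and I expect these to be the main obstacle. Here the distinguishing property separates ``tree-like/contractible'' from ``cyclic/expansive'' large-scale geometry, and in betweenness logic such a distinction is typically exposed at very low quantifier rank: a long \emph{isometric} cycle produces three pairwise non-between vertices and destroys medians (a rank-$3$ phenomenon), whereas a path- or tree-like graph need not; moreover $\subgraph_{C_k}$ and $\isometric_{C_k}$ are FOL-definable for each fixed $k$, and any mismatch in the degree sequence is cheap to detect. Consequently a naive ``path versus cycle'' pair already loses after three moves. To circumvent this I would force the witnessing cycle in the non-chordal, resp.\ non-dismantlable, graph $\bG'_r$ to be \emph{induced but non-isometric} and of length $>r$: it must be filled in by shortcuts so that the ambient metric, all betweenness types, the whole degree sequence, and every induced subgraph of size $\le r$ coincide with those of a genuinely chordal, resp.\ dismantlable, graph $\bG_r$, the two differing only in whether one long chordless path closes up.

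Verifying that such a matched pair exists, and that the Duplicator truly wins---that is, that no betweenness formula of rank $r$ can locate the hidden closure, the missing chord, or the parity---is the crux of the whole argument; the remainder is the routine bookkeeping of a distance-threshold strategy, in which after the $i$-th move the Duplicator maintains a partial isomorphism correct on all distances up to roughly $3^{\,r-i}$, so that every betweenness fact among the points played so far is already forced. Once all five pairs and their game analyses are in place, Theorem~\ref{EhrFra_bis} immediately gives that none of $\chordal,\planar,\dismantlable,\partialJ,\eulerian$ is FOLB-definable.
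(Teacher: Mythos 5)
Your framework (Theorem~\ref{EhrFra_bis} plus a distance-threshold Duplicator strategy) is the right one, and your pairs for $\partialJ$ and $\eulerian$ --- even versus odd wheels, and double wheels over even versus odd rims --- are essentially the paper's constructions and work for the same reason: the apex forces diameter~$2$, so the betweenness predicate degenerates to adjacency plus common neighbourhoods and the game reduces to the ordinary FOL game on large even versus odd cycles. But there are two genuine gaps. First, your pair for $\planar$ fails. The prism $C_n\,\square\,K_2$ is bipartite iff $n$ is even, while the M\"obius ladder with $n$ rungs is bipartite iff $n$ is odd, so for every $n$ exactly one of the two is bipartite; since $\bipartite$ is FOLB-definable by a sentence of quantifier rank~$3$, the Spoiler wins in three moves. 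Even after repairing the parities by comparing a bipartite prism with a bipartite M\"obius ladder of different rim length, you are still playing on large-diameter graphs, where $\pcube$ (the prism is a partial cube, the M\"obius ladder is not) and other low-rank FOLB-definable metric properties separate the two structures. This is precisely the danger you flag in your opening paragraph but then do not guard against: classical FOL-indistinguishable pairs built on local isomorphism do not survive in FOLB, because betweenness exposes the global metric.

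Second, for $\chordal$ and $\dismantlable$ you correctly identify the obstacle (a long isometric cycle is detectable at low rank) and state the desiderata for a repaired pair, but you explicitly leave its construction and the game analysis as ``the crux,'' so the proof is not actually given for these two queries --- nor, consequently, for $\planar$. The missing idea, which the paper uses uniformly for all five queries, is to \emph{cone off} both structures: take $A$ a long path and $B$ the disjoint union of a long path and a long cycle, and add one universal apex (for $\chordal$, $\partialJ$) or two non-adjacent universal apexes (for $\dismantlable$, $\planar$, $\eulerian$). The apexes make the long cycle induced but non-isometric, preserve the chordal/planar/dismantlable distinction, and collapse every interval to $\{u,v\}$, $\{u,x^*,v\}$ or $\{u,x^*,z,v\}$; the paper's Claim~\ref{partial_isomorphism} then shows that any partial graph isomorphism fixing the apexes is automatically a partial isomorphism of betweenness structures, after which your $3^{\,r-i}$ threshold strategy on paths versus path-plus-cycle applies verbatim. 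Without this (or an equivalent device that kills the global metric), the constructions for $\chordal$, $\dismantlable$ and $\planar$ remain unproved.
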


\begin{proof} The proofs are similar in spirit to the proof that the
  queries $\acyclicity$ and $\bipartite$ are not FOL-definable using
  Ehrenfeucht-Fra\"{i}ss\'e games, see~\cite{Kol,Lib}.

\medskip\noindent
{\bf To the query} {$\chordal$}.
For any $r \geq 1$, let $d\geq 3^{r+2}$. Consider the graphs $A=(V_1,E_1)$ and $B=(V_2, E_2)$,  where $A$ is a path $(x_1,x_2,\ldots,x_{4d-1},x_{4d})$ with $4d$ vertices
and $B$ is a union of a path $(y_1,y_2,\ldots,y_{2d-1},y_{2d})$ and a cycle $(y_{2d+1},\ldots,y_{4d},y_{2d+1})$ with $2d$ vertices each. Let $A^*$ be the graph obtained by adding a vertex $x^*$  and making it adjacent to all vertices of $A$ and let $B^*$ be the graph obtained
by adding a vertex $y^*$ and making it adjacent to all vertices in $B$; see Fig. \ref{A and B}. By construction,  $A^*$ is chordal and $B^*$ is not
chordal. Set  $V_1^*:= V_1 \cup \{x^*\}$ and $V_2^*:= V_2 \cup \{y^*\}$.

\begin{figure}[htb]
\centering
\hspace{10pt}
\includegraphics[width=70mm,scale=0.9]{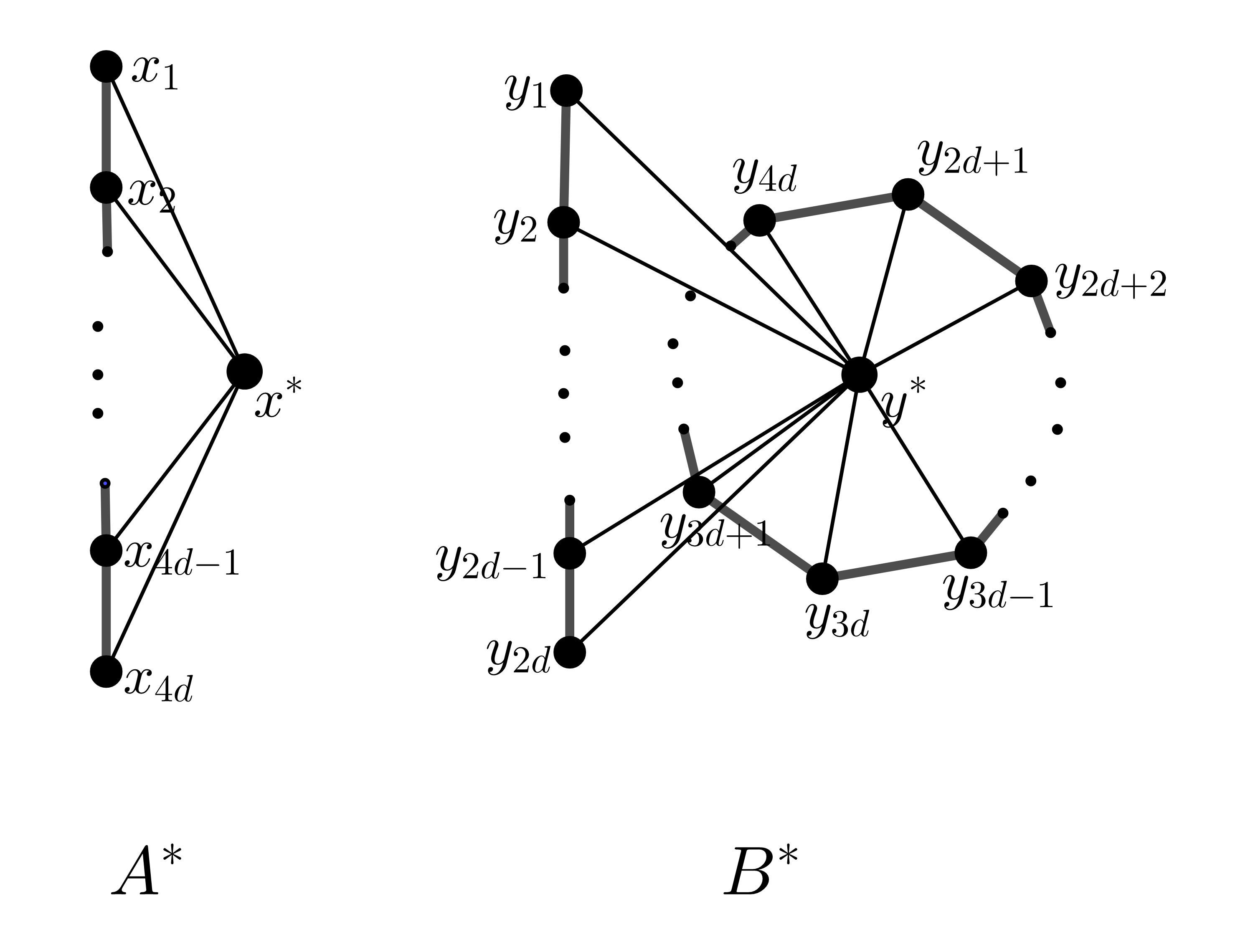}
\caption{The graphs $A^*$ and $B^*$ for the query {\sf Chordal}}
\label{A and B}
\end{figure}

Let $B_1$ and $B_2$ be the betweenness predicates of the graphs $A^*$
and $B^*$ and let $\bA^* = (V_1^*, B_1)$ and $\bB^* = (V_2^*, B_2)$
denote the graphic interval structures of $A^*$ and $B^*$.  The edges
of the graphs $A^*$ and $B^*$ are $E_{B_1}$ and $E_{B_2}$,
respectively. Notice that both graphs $A^*$ and $B^*$ have diameter 2
and that $x^*$ (respectively, $y^*$) is contained in all intervals
between two non-adjacent vertices $u,v$ of $A$ (respectively, of
$B$). Consequently, for any vertices $u,v \in A$ (respectively,
$u,v \in B$), we have $I(u,v) = \{u,v\}$ when $d_A(u,v) = 1$
(respectively, $d_B(u,v)=1$), $I(u,v) = \{u,x^*,v\}$ (respectively,
$I(u,v) = \{u,y^*,v\}$) if $d_A(u,v) \geq 3$ (respectively,
$d_B(u,v)\geq 3$), and $I(u,v) = \{u,x^*,z,v\}$ (respectively,
$I(u,v) = \{u,y^*,z,v\}$) if $d_A(u,v) = 2$ (respectively,
$d_B(u,v)=2$) and $z$ is the unique common neighbor of $u$ and $v$ in
$A$ (respectively, in $B$).
Using this structure of intervals in $A^*$ and
$B^*$, we can establish the following result:

\begin{claim}\label{partial_isomorphism}
  Let $f$ be a partial map  from $V_1^*$ to $V_2^*$ such that
  $f(x^*) = y^*$ if $x^*$ is in the domain $X$ of $f$ or if $y^*$ is
  in the codomain $Y$ of $f$.
If $f$ is an isomorphism between the subgraphs $H_1$ and
  $H_2$ induced by the sets $X$ and $Y$ in the graphs
  $A^*$ and $B^*$, respectively, then $f$ is a partial isomorphism between the
  interval structures $\bA^*$ and $\bB^*$.
\end{claim}

\begin{proof}
Since $f$ is an isomorphism from $H_1$ to $H_2$, observe that $f$ is
  necessarily injective.  To establish the claim, we have to show that
  for any $u,v,z$ of $X$, we have $B_1(u,z,v)$ if and only if
  $B_2(f(u),f(z),f(v))$. This trivially holds if $u = v$ or if
  $z \in \{u,v\}$. Since $f$ is injective, this also holds if
  $f(u) = f(v)$ or if $f(z) \in \{f(u),f(v)\}$.

  Thus suppose that $u \neq v$, $f(u) \neq f(v)$, $z \notin \{u,v\}$
  and $f(z) \notin \{f(u),f(v)\}$. Since $f$ is an isomorphism from
  $H_1$ to $H_2$, we have that $u \sim v$ if and only if
  $f(u) \sim f(v)$. In this case, since $z \notin \{u,v\}$ and
  $f(z) \notin \{f(u),f(v)\}$, neither $B_1(u,z,v)$ nor
  $B_2(f(u),f(z),f(v))$ holds. Assume now that $u \nsim v$ and
  $f(u) \nsim f(v)$. Since $u$ and
  $v$ (respectively, $f(u)$ and $f(v)$) are at distance $2$ in $A^*$
  (respectively, in $B^*$), we have $B_1(u,z,v)$ (respectively,
  $B_2(f(u),f(z),f(v))$) if and only if $z\sim u,v$ (respectively,
  $f(z) \sim f(u), f(v)$). Since $f$ is an isomorphism from $H_1$ to
  $H_2$, we have $z \sim u,v$ if and only if $f(z) \sim f(u),f(v)$ and
  consequently, $B_1(u,z,v)$ if and only if
  $B_2(f(u),f(z),f(v))$. This ends the proof of the claim.
\end{proof}

By Claim~\ref{partial_isomorphism}, to prove that the Duplicator wins
the $r$-move EF-games on the interval structures ${\bA}^*$ and
${\bB}^*$ it sufficed to prove that the Duplicator wins the $r$-move
EF-games on the graphs $A^*$ and $B^*$. We denote by $a_i$ and $b_i$
the vertices played in $A^*$ and $B^*$ at $i$th move of the
game. Since $x^*, y^*, x_1, x_{4d}, y_1, y_{2d}$ play a special role
in the graphs $A^*$ and $B^*$ (they are the only vertices that are not
of degree $4$), we set $a_0 = x^*$, $b_0 = y^*$, $a_{-1}=x_1$,
$b_{-1}= y_1$, $a_{-2}=x_{4d}$, and $b_{-2}=y_{2d}$.

We describe a strategy for Duplicator to ensure that at any step $i$
of the game, the following conditions hold for all $-2 \leq j \leq i$:
\begin{enumerate}[(1)]
\item $a_j = a_0 = x^*$ if and only if
  $b_j = b_0 = y^*$,
\item if $a_j \neq x^*$, then there exists an isomorphism $f$ from the
  ball $B_{3^{r-i}}(a_j,A)$ to the ball $B_{3^{r-i}}(b_j,B)$ such that
  $f(a_{\ell}) = b_\ell$ (respectively, $f^{-1}(b_{\ell})=a_{\ell}$)
  for any $a_{\ell} \in B_{3^{r-i}}(a_j,A)$ (respectively,
  $b_{\ell} \in B_{3^{r-i}}(b_j,B)$).
\end{enumerate}

Observe that by our choice of $d$, when $a_j \neq x^*$ and
$b_j \neq y^*$, the ball $B_{3^{r-i}}(a_j,A)$ (respectively,
$B_{3^{r-i}}(b_j,B)$) is a path that is an isometric subgraph of $A$
(respectively, of $B$) for any $0 \leq i \leq r$ and $-2 \leq j \leq i$.
Note also that condition (2) implies that:
\begin{enumerate}[(3)]
\item if $x^* \notin \{a_j,a_{\ell}\}$, then $d_A(a_j,a_{\ell}) > 3^{r-i}$
  if and only if $d_B(b_j,b_{\ell}) > 3^{r-i}$,
\item[(4)] if $x^* \notin \{a_j,a_{\ell}\}$ and
  $d_A(a_j,a_{\ell}) \leq 3^{r-i}$, then
  $d_A(a_j,a_{\ell}) = d_B(b_j,b_{\ell})$.
\end{enumerate}

Before the start of the game, we have $i=0$ and
the conditions (1) and (2) hold since
$d_A(a_{-2},a_{-1}) = 4d-1 \geq 4 \cdot 3^{r+2}-1 > 3^r$ and
$d_B(b_{-2},b_{-1}) = 2d-1 \geq 2 \cdot 3^{r+2}-1 > 3^r$.
Assume now that conditions (1) and (2) hold for the first $i$  steps
of the game and assume that
the Spoiler picks $a_{i+1}\in A$ (the case where the Spoiler picks
$b_{i+1} \in B$ is similar). First, suppose that
$a_{i+1}=a_j$ for some $j \leq i$ (this holds in particular if
$a_{i+1} \in \{x^*,x_1,x_{4d}\}$). In this case, set $b_{i+1} = b_j$
and observe that conditions (1) and (2) still hold after step $i+1$.
Thus, further suppose that $a_{i+1}\ne a_j$ for any $j \leq i$. We distinguish
two cases.

\begin{case} $d_A(a_{i+1},a_j) > 3^{r-(i+1)}$ for all
$-2 \leq j \leq i$ such that $a_j \neq x^*$.
\end{case}

For any $-2 \leq j \leq i$ such that $b_{j} \in \{y_1, \ldots, y_d\}$,
the ball $B_{3^{r-(i+1)}}(b_j,3^{r-(i+1)})$ contains at most
$2 \cdot 3^{r-(i+1)} + 1$ vertices. Consequently,
$\bigcup_{b_j \in \{y_1, \ldots, y_d\}}B_{3^{r-(i+1)}}(b_j,B)$
contains at most $(i+2)(2 \cdot 3^{r-(i+1)} + 1)$ vertices. Since
$2d \geq 2\cdot 3^{r+2} > (i+2)(2\cdot 3^{r-(i+1)} + 1)$, there exists
a vertex $y_k$ with $1 \leq k \leq 2d$ such that $y_k$ does not belong
to any of these balls of radius $3^{r-(i+1)}$, i.e., for any $b_j$
such that $b_j \in \{y_1, \ldots, y_d\}$, we have
$d_B(y_k,b_j) > 3^{r-(i+1)}$. Let Duplicator pick $b_{i+1}= y_k$ and
observe that condition (1) is satisfied with this choice of
$b_{i+1}$. We now show that condition (2) is also satisfied.  For any
$2 \leq j \leq i$ such that $a_j\neq x^*$, we have
$d_A(a_{i+1},a_j) > 3^{r-(i+1)}$ and $d_B(b_{i+1},b_j) >
3^{r-(i+1)}$. In particular, $d_A(a_{i+1},a_{-2})$,
$d_A(a_{i+1},a_{-1})$, $d_B(b_{i+1},b_{-1})$, $d_B(b_{i+1},b_{-1})$
are greater than $3^{r-(i+1)}$ and thus the balls
$B_{3^{r-(i+1)}}(a_{i+1},A)$ and $B_{3^{r-(i+1)}}(b_{i+1},B)$ are both
paths of length $3^{r-i}$ that do not contain any vertex $a_\ell$ or
$b_\ell$ with $\ell \leq i$, and thus condition (2) is satisfied for
$a_{i+1}$. For all other vertices, condition (2) trivially holds by
induction hypothesis.

\begin{case} There exists $-2 \leq j \leq i$ such that
$a_j\neq x^*$ and $d_A(a_j,a_{i+1}) \leq 3^{r-(i+1)}$.
\end{case}

By induction hypothesis, there is an isomorphism $f$ from the ball
$B_{3^{r-i}}(a_j,A)$ to the ball $B_{3^{r-i}}(b_j,B)$ such that for
any $a_\ell \in B_{3^{r-i}}(a_j,A)$ (respectively,
$b_\ell \in B_{3^{r-i}}(b_j,B)$), we have $f(a_{\ell}) = b_{\ell}$
(respectively, $f^{-1}(b_\ell) = a_\ell$). Let Duplicator pick
$b_{i+1} = f(a_{i+1})$ and note that condition (1) holds with such a
choice. We now show that condition (2) also holds. Observe that for
any $-2 \leq \ell \leq i+1$ such that $a_{\ell} \in A$ and
$a_{i+1} \in B_{3^{r-(i+1)}}(a_{\ell},A)$, the ball
$B_{3^{r-(i+1)}}(a_{\ell},A)$ is included in
$B_{3^{r-i}}(a_{j},A)$. Indeed, if
$d_A(a_{\ell},a_{i+1}) \leq 3^{r-(i+1)}$, we have
$d_A(a_{\ell},a_j) \leq d_A(a_\ell,a_{i+1}) + d_A(a_{i+1},a_{j}) \leq
2\cdot 3^{r-(i+1)}$ and thus $B_{3^{r-(i+1)}}(a_{\ell},A)$ is included
in $B_{3^{r-i}}(a_{\ell},A)$.  Similarly, since
$d_B(b_{i+1},b_j) = d_A(a_{i+1},a_j) \leq 3^{r-(i+1)}$, for any
$-2 \leq \ell \leq i+1$ such that $b_\ell \in B$ and
$b_{i+1} \in B_{3^{r-(i+1)}}(b_{\ell},B)$, the ball
$B_{3^{r-(i+1)}}(b_{\ell},B)$ is included in $B_{3^{r-i}}(b_{j},B)$.
Therefore, $a_{i+1} \in B_{3^{r-(i+1)}}(a_{\ell},A)$ if and only if
$b_{i+1} \in B_{3^{r-(i+1)}}(b_{\ell},B)$ and when it is the case, $f$
induces an isomorphism between the balls $B_{3^{r-(i+1)}}(a_{\ell},A)$ and
$B_{3^{r-(i+1)}}(f(a_{\ell}),B) = B_{3^{r-(i+1)}}(b_{\ell},B)$. By
induction hypothesis and since $f(a_{i+1}) = b_{i+1}$, for any
$-2 \leq k \leq i+1$ such that $a_{k} \in B_{3^{r-(i+1)}}(a_{\ell},A)$
(respectively, $b_{k} \in B_{3^{r-(i+1)}}(b_{\ell},B)$), we have
$f(a_k) = b_k$ (respectively, $f^{-1}(b_k) = a_k$). This establishes
condition (2) for all balls containing $a_{i+1}$. For the other balls,
the result holds trivially by induction hypothesis.

\medskip
At the end of the game, i.e., after $r$ rounds, for any
$-2 \leq j,\ell \leq r$, we show that $a_j\sim a_{\ell}$ in $A^*$ if
and only if $b_j \sim b_{\ell}$ in $B^*$. Note that by conditions (1)
and (2), we have $a_j = a_\ell$ if and only if $b_j = b_\ell$. In this
case, we have $a_j \nsim a_{\ell}$ and $b_j \nsim b_{\ell}$. Assume
now that $a_j \neq a_{\ell}$ and $b_j \neq b_{\ell}$.  If $a_j = x^*$
(respectively, $a_{\ell} = x^*$, $b_j = y^*$, $b_{\ell}=y^*$), then by
  condition (1), we have $b_j = y^*$ (respectively, $b_{\ell} = y^*$,
    $a_j = x^*$, $a_{\ell}=x^*$) and in this case we have
    $a_j \sim a_{\ell}$ and $b_j \sim b_{\ell}$ since $x^*$ and $y^*$
    are respectively adjacent to all vertices of $A$ and $B$. Suppose
    now that $a_j, a_\ell \in A$, $b_j, b_{\ell} \in B$, and
    $a_j \sim a_\ell$ (respectively, $b_j \sim b_\ell$). By Condition
    (2), there is an isomorphism $f$ from $B_1(a_j,A)$ to
    $B_1(b_j, B)$ such that $f(a_\ell) = b_\ell$ (respectively,
    $f^{-1}(b_\ell) = a_{\ell})$. Consequently,
    $b_\ell=f(a_\ell) \sim b_j=f(a_j)$ (respectively,
    $a_\ell=f^{-1}(b_\ell) \sim a_j=f^{-1}(b_j)$ ) since
    $a_\ell \sim a_j$ (respectively, $b_{\ell} \sim b_j$).

    Consequently, the map $f : a_j \mapsto b_j$ defines an isomorphism
    between the graphs induced by $X = \{a_{-2},a_{-1}, \ldots,a_r\}$
    in $A^*$ and $Y = \{b_{-2},b_{-1}, \ldots,b_r\}$ in $B^*$. By
    Claim~\ref{partial_isomorphism}, we get that the Duplicator wins
    the $r$-moves EF-game on the interval structure
    ${\bA}^*=(V_1^*,B_1)$ and ${\bB}^*=(V_2^*,B_2)$. By Theorem
    \ref{EhrFra_bis}, {$\chordal$} is not FOLB-definable.

\medskip\noindent
{\bf To the query} {$\dismantlable$.} We consider the same graphs $A=(V_1,E_1)$ and $B=(V_2,E_2)$ as in query { \sf $\chordal$}.
Let $A^*$ be the graph obtained by adding two non-adjacent  vertices $x^*_1$ and $x^*_2$ and making them adjacent to all vertices in $A$
and let $B^*$ be the graph obtained by adding two non-adjacent vertices $y^*_1$ and $y^*_2$ and making them adjacent to all vertices of $B$;
see Fig. \ref{dismantlable}.  Then $x_1,x_2,\ldots,x_{4d-1},x^*_1,x^*_2,x_{4d}$ is a dismantling order of $A^*$ because each vertex $x_i, i<4d$
is dominated in the subgraph induced by the remaining vertices by  the vertex $x_{i+1}$ while the subgraph induced by the last three
vertices $x^*_1,x^*_2,x_{4d}$ is a 2-path with $x_{4d}$ as the middle vertex. On the other hand,
the graph  $B^*$ is not dismantlable because the unique way to partially dismantle $B^*$ is to remove the vertices of the path of $B_r$ either in order $y_1,y_2,\ldots,y_{2d}$
or in the reverse order  $y_{2d},y_{2d-1}\ldots,y_{1}$. The resulting subgraph of $B^*$ is a double wheel induced by the cycle $(y_{2d+1},\ldots,y_{4d})$ and two non-adjacent vertices $y_1^*$ and $y^*_2$
and this subgraph does not contain any dominated vertex.

\begin{figure}[htb]
\centering
\includegraphics[width=90mm,scale=1.]{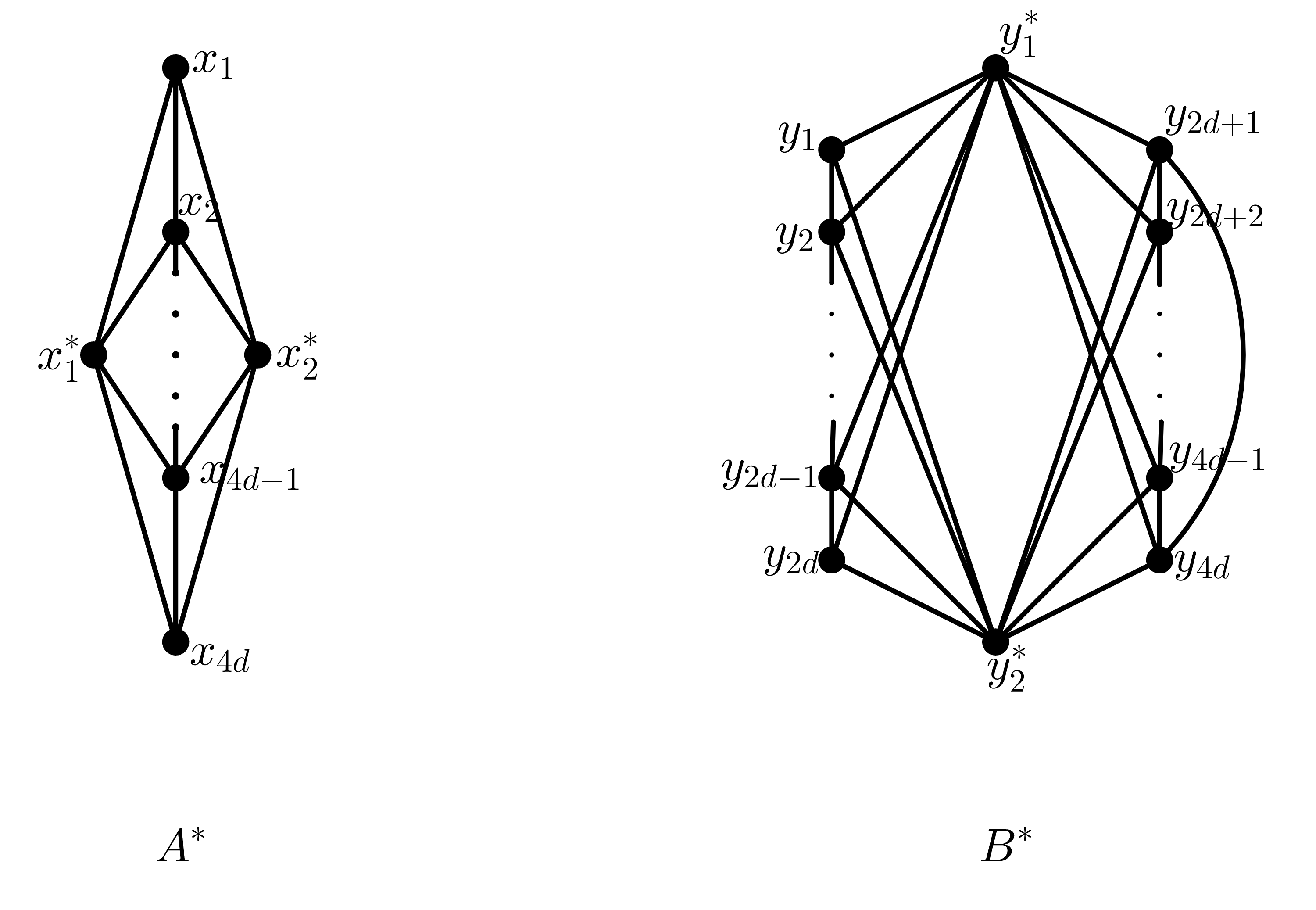}
\hspace{10pt}
\caption{The graphs $A^*$ and $B^*$ for the query {\sf Dismantlable}}
\label{dismantlable}
\end{figure}

Let $B_1$ and $B_2$ be the betweenness predicates of the graphs $A^*$
and $B^*$. Let ${\bA}^* = (V_1^* ,B_1)$ and $\bB^*=(V_2^* ,B_2)$,
where $V_1^* = V_1 \cup \{x^*_1, x^*_2 \}$ and
$V_2^* = V_2 \cup \{y^*_1, y^*_2 \}$.  Note that both graphs $A^*$ and
$B^*$ have diameter 2 and that $x_1^*, x_2^*$ (respectively,
$y_1^*, y_2^*$) are contained in all intervals between non-adjacent
vertices $u,v$ of $A$ (respectively, of $B$).  Consequently, for any
vertices $u,v \in A$ (respectively, $u,v \in B$), we have
$I(u,v) = \{u,v\}$ when $d_A(u,v) = 1$ (respectively, $d_B(u,v)=1$),
$I(u,v) = \{u,x_1^*,x_2^*,v\}$ (respectively,
$I(u,v) = \{u,y_1^*,y_2^*,v\}$) if $d_A(u,v) \geq 3$ (respectively,
$d_B(u,v)\geq 3$), and $I(u,v) = \{u,x_1^*,x_2^*,z,v\}$ (respectively,
$I(u,v) = \{u,y_1^*,y_2^*,z,v\}$) if $d_A(u,v) = 2$ (respectively,
$d_B(u,v)=2$) and $z$ is the unique common neighbor of $u$ and $v$ in
$A$ (respectively, in $B$). Observe also that
$I(x_1^*,x_2^*) = V(A^*)$ and $I(y_1^*,y_2^*) = V(B^*)$.

Using this structure of intervals and a proof similar to the proof of
Claim \ref{partial_isomorphism}, one can establish that for any map
$f$ from $X \subseteq V_1^*$ to $Y \subseteq V_2^*$ such that
$f(x_1^*) = y_1^*$ if $x_1^* \in X$ or $y_1^* \in Y$,
$f(x_2^*) = y_2^*$ if $x_2^* \in X$ or $y_2^* \in Y$, if $f$ is an
isomorphism between the subgraphs induced by $X$ and $Y$ in the graphs
$A^*$ and $B^*$, then $f$ is a partial isomorphism between the
interval structures $A^*$ and $B^*$. Then, using a proof similar as in
the case of chordal graphs, we can establish that Duplicator wins the
EF-games on $\bA^*$ and $\bB^*$. By Theorem~\ref{EhrFra_bis}, this
implies that $\dismantlable$ is not FOLB-definable.

\medskip\noindent {\bf To the query} {\sf $\planar$.} We consider the
same graphs $A=(V_1,E_1)$ and $B=(V_2,E_2)$ as in queries $\chordal$
and $\dismantlable$ and the same graphs $A^*$ and $B^*$ in the query
$\dismantlable$. The graph $A^*$ is planar: a planar drawing of $A^*$
is given in Fig. \ref{dismantlable}. On the other hand, $B^*$ is not
planar because it contains $K_5$ as a minor. This $K_5$-minor is
defined by any three distinct vertices $y_i,y_j,y_k$ of the cycle $C$
of $B$ and the vertices $y^*_1,y^*_2$. The vertices $y_i,y_j,y_k$ are
connected by three disjoint subpaths of the cycle $C$, $y^*_1$ and
$y^*_2$ are connected to $y_i,y_j,y_k$ by edges, and $y_1^*$ and
$y^*_2$ are connected by a path of length 2 passing via the path of
$B$. The result then follows from the fact that the Duplicator wins
the EF-game on $\bA^*$ and $\bB^*$.

\medskip\noindent {\bf To the query} {\sf
  $\partialJ$.} Consider the graphs $A= (V_1,E_1)$ and $B=(V_2, E_2)$, where $A$ is a
cycle $(x_1,x_2,\ldots,x_{2d-1},x_{2d}, x_1)$ with $2d$ vertices and
$B$ consists of two odd cycles $(y_1,y_2,\ldots,y_{d-1},y_{d})$ and
$(y_{d+1},\ldots,y_{2d},y_{d+1})$. Let $A^*$ be the graph obtained by
adding a vertex $x^*$ and making it adjacent to all vertices of $A$
and let $B^*$ be the graph obtained by adding a vertex $y^*$ and
making it adjacent to all vertices in $B$. One can directly check (or
use the result of~\cite{Ch_Johnson}) that $A^*$ is a partial Johnson
graph and $B^*$ is not a partial Johnson graph because $B^*$ contains
odd wheels. Since the structure of intervals in this pairs of graphs
is similar to the graphs considered in the case of the query {\sf
  $\chordal$}, an analogous of Claim \ref{partial_isomorphism} for
this pair of graphs also holds. Then, using the same proof as in the
case of chordal graphs (where the vertices
$a_{-2},a_{-1},b_{-2}, b_{-1}$ are not defined), we can establish that
Duplicator wins the EF-games on $\bA^*$ and $\bB^*$. By
Theorem~\ref{EhrFra_bis}, this implies that $\dismantlable$ is not
FOLB-definable.
\begin{center}
\begin{figure}
\includegraphics[width=60mm,scale=0.85]{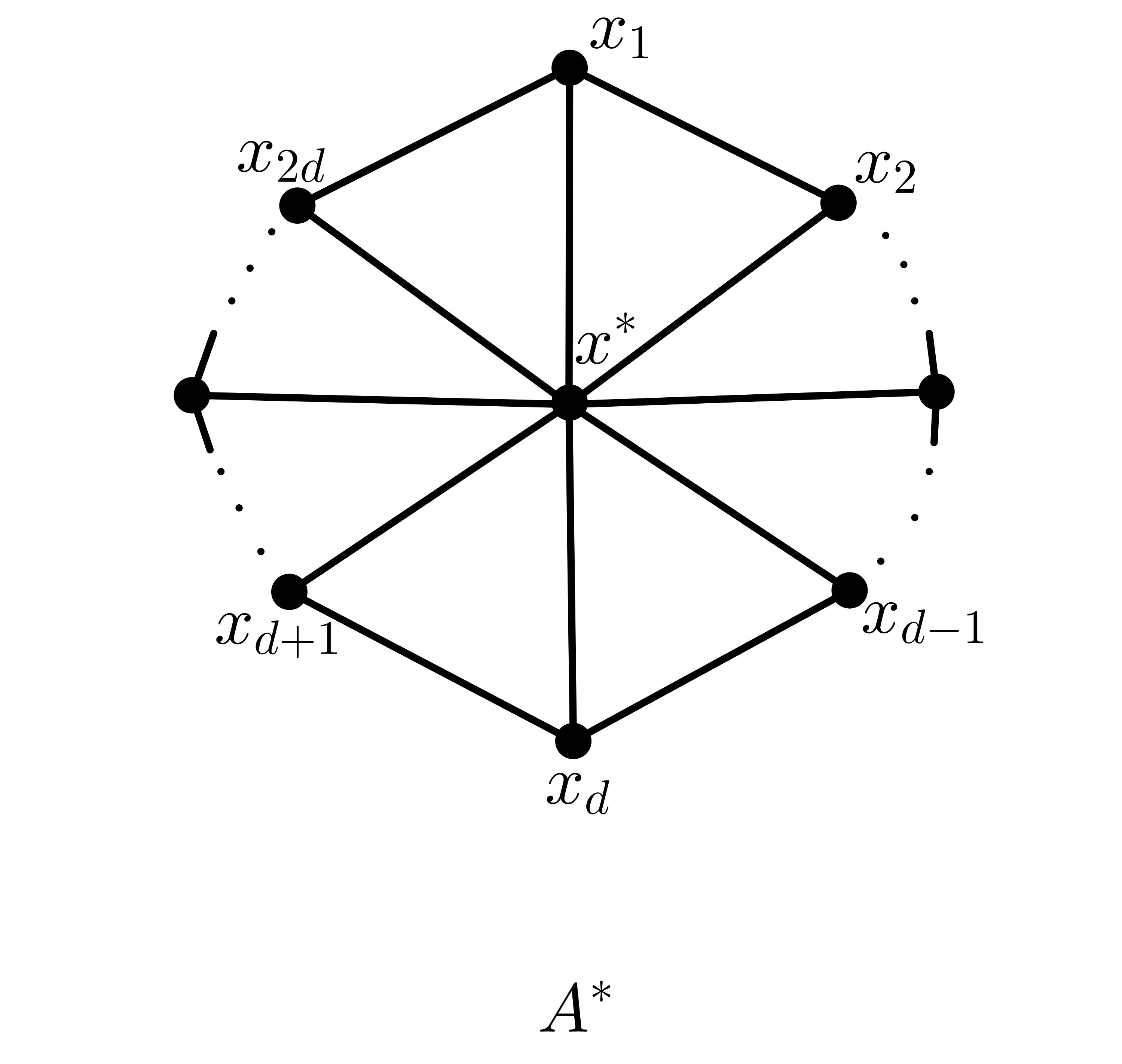}
\hspace{10pt}
\includegraphics[width=50mm,scale=0.85]{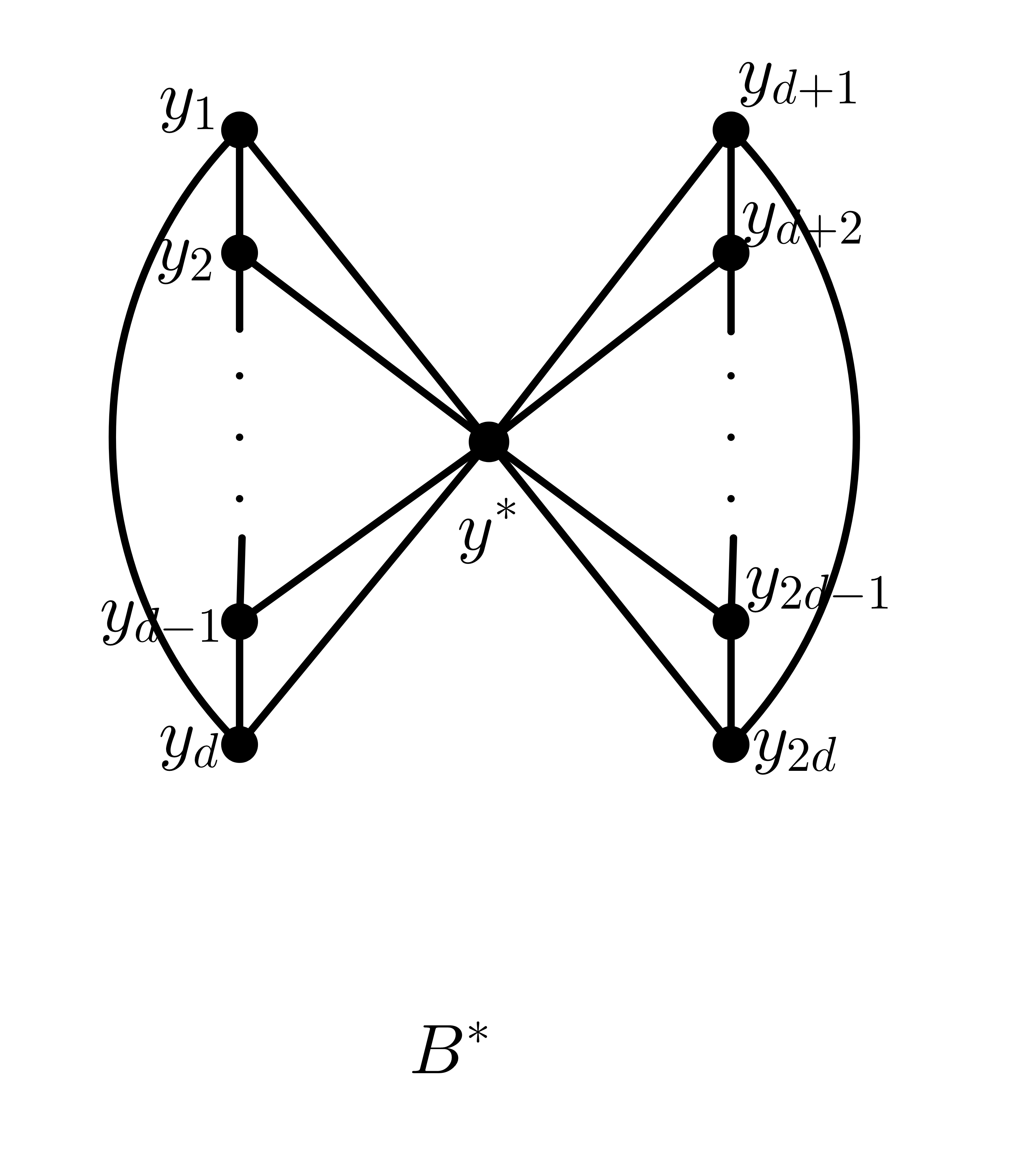}
\caption{The graphs $A^*$ and $B^*$ for the query {\sf Partial Johnson}}
\label{partial_johnson}
\end{figure}
\end{center}

\medskip\noindent {\bf To the query} {\sf $\eulerian$.} For any
$r \geq 1$, consider the graphs $A=(V_1,E_1)$ and $B=(V_2,E_2)$, where
$A=\overline{K}_{2r}$ is a stable set of size $2r$ and
$B=\overline{K}_{2r+1}$ is a stable set of size $2r+1$.  Let $A^*$ be
the graph obtained by adding two non-adjacent vertices $x^*_1$ and
$x^*_2$ and making them adjacent to all vertices in $A_r$ and let
$B^*$ be the graph obtained by adding two non-adjacent vertices
$y^*_1$ and $y^*_2$ and making them adjacent to all vertices of $B_r$.
Clearly, $A^*$ and $B^*$ are the complete bipartite graphs $K_{2,2r}$
and $K_{2,2r+1}$. Let
$V_1=\{ x_1,\ldots,x_{2r}\}, V_2=\{ y_1,\ldots,y_{2r},y_{2r+1}\}$ and
$V_1^* = V_1 \cup \{x^*_1, x^*_2 \}, V_2^*= V_2 \cup \{y^*_1, y^*_2
\}$.  Obviously, $A^*$ is an Eulerian graph and $B^*$ is not Eulerian.
Let $B_1$ and $B_2$ be the betweenness predicates of the graphs $A^*$
and $B^*$ and consider the interval structures $\bA^*=(V_1^*,B_1)$ and
$\bB^*=(V_2^* ,B_2)$.  Both graphs $A^*$ and $B^*$ have diameter 2 and
one easily see that for any two vertices $u,v$ of $A$ (respectively,
$B$), we have $I(u,v) = \{ u, x^*_1, x^*_2, v \}$ (respectively,
$I(u,v)=\{ u, y^*_1, y^*_2, v \}$). Observe also that
$I(x_1^*,x_2^*) = V_1^*$ and $I(y_1^*,y_2^*) = V_2^*$.

Using this structure of intervals and a proof similar (but simpler) to
the proof of Claim \ref{partial_isomorphism}, one can establish that
for any map $f$ from $X \subseteq V_1^*$ to $Y \subseteq V_2^*$ such
that $f(x_1^*) = y_1^*$ if $x_1^* \in X$ or $y_1^* \in Y$,
$f(x_2^*) = y_2^*$ if $x_2^* \in X$ or $y_2^* \in Y$, if $f$ is an
isomorphism between the subgraphs induced by $X$ and $Y$ in the graphs
$A^*$ and $B^*$, then $f$ is a partial isomorphism between the
interval structures $A^*$ and $B^*$. Then, using a proof similar as in
the case of chordal graphs, we can establish that Duplicator wins the
EF-games on $\bA^*$ and $\bB^*$. By Theorem~\ref{EhrFra_bis}, this
implies that $\eulerian$ is not FOLB-definable.
\end{proof}

Since FOLB-queries can be tested in polynomial time on any graph $G$
(see Section~\ref{FO-poly}), any property that is NP-hard to recognize
is unlikely to be expressible as a FOLB-query. It is known that it is
NP-complete to decide whether a given graph admits a distance
preserving order~\cite{CoDuNiSo}. Therefore if P$\neq$NP, $\dpo$
cannot be expressed as a FOLB-query. In fact, one can show that $\dpo$
is not expressible as a FOLB-query. To establish this result, one can
proceed as in the proof of Proposition~\ref{notFOLB} in order to prove
that Duplicator wins the games on the two graphs $A^*$ and $B^*$ from
Figure~\ref{dpo} ($A^*$ has a distance preserving order while $B^*$
does not). Since these graphs have diameter 4, in order to give a
formal proof, we need to adapt Claim~\ref{partial_isomorphism} and the
inductive condictions (1) and (2) used in the proof of
Proposition~\ref{notFOLB}. Notice that in these graphs, we have nine
different types of intervals. We do not give a proof of this result,
because this would need a lengthy proof leading to an expected
conclusion.

\begin{figure}[htb]
\centering
\includegraphics[width=70mm,scale=1.6]{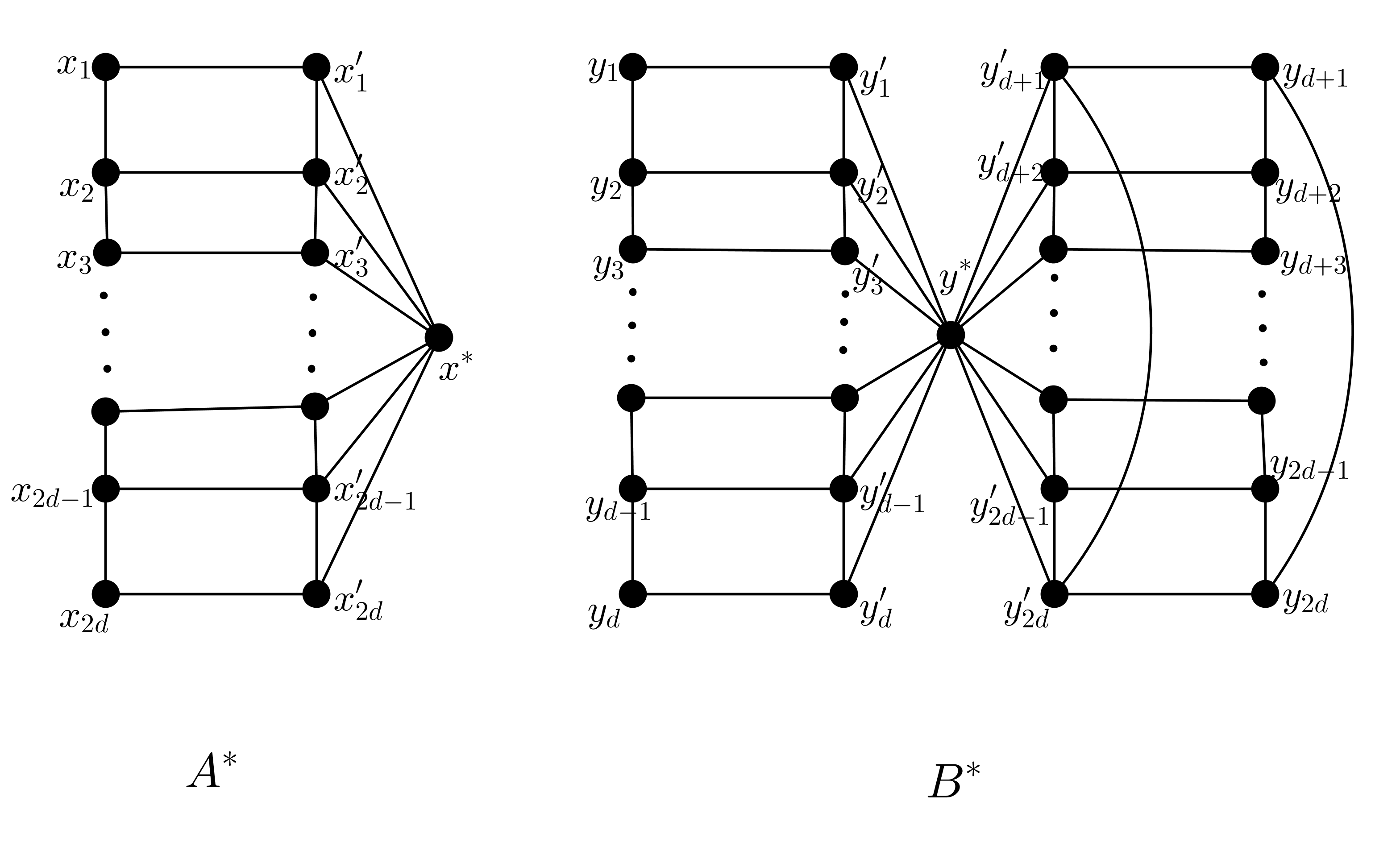}
\caption{The graphs $A^*$ and $B^*$ for the query $\dpo$}
\label{dpo}
\end{figure}

\section{Running time of FOLB queries}\label{FO-poly}

Let $\phi$ be an FO sentence in vocabulary $\sigma$, and let $\bA$ be a  $\sigma$-structure. Encoding of a formula $\phi$, $enc(\phi)$,
could be its syntactic tree represented as a string. The length of this string, $enc(\phi)$, is denoted by $||\phi||$.
We define the encoding $enc(\bA)$ of a structure  $\bA$ as the concatenation of $0^n 1$
and all the $enc(R_i)$, where $n$ is the cardinality of the domain of $\bA$ and $R_i$, for each $i=1,2,\ldots ,l$
is the relation symbol in the signature of $\bA$. That is, $enc(\bA) = 0^n1. enc(R_1)\cdots enc(R_l)$. The length of
this string is $||\bA|| = (n+1)+ \sum_{i=1}^p n^{arity(R_i)}$. The \emph{width} of an FO formula $\phi$
is the maximum number of free variables in a subformula of $\phi$. The running time of FO queries in terms of the size of encodings
of a FO query and a structure is given in \cite{Lib} as follows:

\begin{proposition}[\!\!\cite{Lib}] \label{Libkin}
Let $\phi$ be an FO sentence in vocabulary $\sigma$ and let $\bA$ be a  $\sigma$-structure.
If the width of $\phi$ is $k$, then checking whether $\bA$ is a model of $\phi$ can be done in time $O(||\phi || \times || \bA||^k)$.
\end{proposition}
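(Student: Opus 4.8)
The plan is to establish the bound by the standard bottom-up (dynamic-programming) evaluation of $\phi$ over $\bA$ along its syntactic tree. Write $n=|A|$ and identify $A$ with $\{1,\ldots,n\}$. For every subformula $\psi$ of $\phi$, with free variables $\mathrm{free}(\psi)=\{x_{i_1},\ldots,x_{i_m}\}$, the algorithm computes and stores the satisfying relation
$$
 S_{\psi}=\{(a_1,\ldots,a_m)\in A^{m}:\bA\models\psi(a_1,\ldots,a_m)\},
$$
represented as an $m$-dimensional Boolean array indexed by the values of its free variables. Since the width of $\phi$ equals $k$, every subformula satisfies $m\le k$, so each such table has at most $n^{k}$ entries; this uniform size bound is exactly what makes the width the right parameter.

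First I would treat the atomic subformulas as the base case. For an atom $R(t_1,\ldots,t_l)$ or an equality $t_1=t_2$, where each $t_j$ is a variable or a constant and the constant values $c^{\bA}$ are read off $enc(\bA)$, the table $S_{\psi}$ is filled by scanning the given relation $R^{\bA}$, whose size is at most $n^{l}\le ||\bA||$, and recording the assignment it induces on the free variables. Using $n\le ||\bA||$ and $||\bA||\le ||\bA||^{k}$ for $k\ge 1$, this costs $O(||\bA||+n^{k})=O(||\bA||^{k})$.

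Next I would handle the inductive cases, each computing $S_{\psi}$ from the already-computed tables of its immediate subformulas in time $O(n^{k})$. For $\neg\psi'$ one complements the Boolean array. For $\psi_1\wedge\psi_2$ and $\psi_1\vee\psi_2$ one enumerates all assignments to $\mathrm{free}(\psi_1)\cup\mathrm{free}(\psi_2)$---at most $k$ variables, hence at most $n^{k}$ assignments---and, for each, combines the two $O(1)$ array look-ups of the appropriate restrictions. For $\exists x_j\,\psi'$ (and dually $\forall x_j\,\psi'$) one projects out the coordinate $x_j$, declaring an assignment satisfying iff some one-coordinate extension lies in $S_{\psi'}$, again in $O(n^{k})$ time. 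The key point is that the subformula $\psi'$ under a quantifier is itself a subformula of $\phi$, hence has at most $k$ free variables, so its stored table already has size at most $n^{k}$ and the projection respects the budget.

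Finally I would sum the costs. The syntactic tree of $\phi$ has $O(||\phi||)$ nodes, i.e.\ $O(||\phi||)$ subformulas, and by the above each is processed in $O(||\bA||^{k})$ time, so the total is $O(||\phi||\times ||\bA||^{k})$; the answer to model checking is then the single bit stored in $S_{\phi}$. The only point that is more than routine is the bookkeeping of differing free-variable sets when combining subformulas (the join for $\wedge,\vee$) and the projection for quantifiers: one must verify that, under the array representation and the width bound $m\le k$, each such set operation touches at most $n^{k}$ cells and so stays within the per-node budget. I expect this indexing argument to be the main---though modest---obstacle, while everything else is a direct structural induction on $\phi$.
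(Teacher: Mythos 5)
The paper gives no proof of this proposition; it is imported verbatim from Libkin's book \cite{Lib}, and your bottom-up dynamic-programming evaluation along the syntactic tree --- storing, for each subformula, the Boolean table of satisfying assignments to its at most $k$ free variables --- is precisely the standard argument given in that reference. Your accounting is correct (the atomic case is charged $O(\|\bA\|)$ for scanning the relation and every other node $O(n^k)\le O(\|\bA\|^k)$, summed over $O(\|\phi\|)$ nodes), so there is nothing to object to.
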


This gives a polynomial time algorithm for evaluating FO queries on finite structures, for a fixed sentence $\phi$.

The interval structure $\bG=(V,B)$ with $|V|=n$ and $B$, being a ternary relation, can be encoded by a string of length
$|| \bG || = (n+1+n^3)$. Furthermore, for the fixed formula $\phi $, the $||\phi ||$ is a constant, say $c$. The total number of variables
used in $\phi$ will be the upper bound for $k$. Note that  every FOLB sentence defined in this paper uses a fixed number of
variables and, in some cases, the subformulas given by the query $\subgraph$ or  $\isometric$ in sentence $\phi$ maximum contributes
to the width of $\phi$. Thus for a fixed  sentence $\phi$ in FOLB, checking whether $\bA$ is a model of $\phi$ can be done in
time  $O(||\phi || \times || \bA||^k)$ $=$ $O(c\times (n+1+n^3)^k)=O(c\times n^{3k})$.
Note that a particular FOLB-query for a graph class  $\mathcal{C}$ actually characterizes  $\mathcal{C}$.
The \emph{recognition} of a graph class $\mathcal{C}$ is defined as the decision problem ``does a given graph $G$ belong the graph class  $\mathcal{C}$?".
The input is an arbitrary  graph $G$ and the output is {\sf True} if and only if $G$ belongs to $\mathcal{C}$ and {\sf False} otherwise.
If there is a corresponding FOLB-query characterising the graph class $ \mathcal{C}$, then for a given graph $G$, we can ensure that there exists
an algorithm to check whether $G$ is in the graph class $\mathcal{C}$ using that FOLB query.
From all these observations we obtain the following remark: 

\begin{proposition} \label{recognition}
All FOLB-definable graph classes given in Theorem \ref{theorem1}
can be recognized in polynomial time.
\end{proposition}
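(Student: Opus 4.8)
The plan is to reduce the recognition problem to the evaluation of a fixed first-order sentence over the interval structure, and then invoke Proposition~\ref{Libkin}. The input is an arbitrary finite connected graph $G=(V,E)$ with $|V|=n$, whereas the FOLB sentences of Theorem~\ref{theorem1} are formulas over the $\sigma$-structure $\bG=(V,B)$ whose ternary predicate $B$ is the metric betweenness. Hence the first thing I would do is pass from the combinatorial input $G$ to its associated interval structure $\bG$.

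To build $\bG$ from $G$, I would first compute all pairwise distances $d(u,v)$, for instance by running a breadth-first search from each vertex; this takes time $O(n(n+|E|))=O(n^3)$. Once the distance matrix is available, the betweenness predicate is filled in directly from its definition: for each triple $(u,x,v)$ set $B(u,x,v)$ to true if and only if $d(u,x)+d(x,v)=d(u,v)$. There are $n^3$ triples, so this phase also runs in $O(n^3)$ time and produces the encoding $enc(\bG)$ of length $||\bG||=n+1+n^3$. Note that the interval structure of any connected graph automatically satisfies the axioms (IB1)--(IB7), so $\bG$ lies in the class on which the defining queries were designed.

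The second and final step is to evaluate, on $\bG$, the fixed FOLB sentence $\phi_{\mathcal C}$ that characterizes the class $\mathcal C$ in question (each such sentence was exhibited in Sections~\ref{sec:weakly-modular}--\ref{sec:hyperbolicity}). For a fixed class $\mathcal C$ the sentence $\phi_{\mathcal C}$ is a constant: its length $||\phi_{\mathcal C}||$ and, crucially, its width $k$ are constants depending only on $\mathcal C$ and not on $G$. Applying Proposition~\ref{Libkin} gives a running time $O(||\phi_{\mathcal C}||\times ||\bG||^{k})=O((n+1+n^3)^{k})=O(n^{3k})$. Adding the $O(n^3)$ preprocessing cost, the whole procedure runs in time polynomial in $n$, as claimed.

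The argument is essentially a bookkeeping exercise, and only two points need genuine checking. First, one must confirm that the interval structure is computable in polynomial time from the graph; this is immediate from all-pairs BFS as above. Second, and this is the point I would be most careful about, one must verify that every defining sentence in Theorem~\ref{theorem1} really has bounded width. This is where the potentially large subformulas live: the queries $\subgraph_H$ and $\isometric_H$ introduce one quantified variable per vertex of $H$ and so dominate the width. Since each forbidden pattern $H$ (e.g.\ $K_{2,3}$, the wheels $W_4,W_4^-$, the Beineke graphs $F_i'$, or the forbidden subgraphs $H_1,\dots,H_6$) is a fixed finite graph independent of the input, the resulting width is still a constant, and no step of the construction destroys this bound. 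Hence the exponent $3k$ stays constant for each class and polynomiality is preserved.
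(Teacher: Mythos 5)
Your proposal is correct and follows essentially the same route as the paper: encode the interval structure (of size $n+1+n^3$), observe that each defining sentence has constant length and constant width dominated by the $\subgraph$/$\isometric$ subformulas, and invoke Proposition~\ref{Libkin} to get an $O(n^{3k})$ bound. The only addition is your explicit $O(n^3)$ preprocessing step computing $B$ from $G$ via all-pairs BFS, which the paper leaves implicit but which is a worthwhile clarification rather than a different argument.
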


Notice that most of the classes of graphs from  Theorem \ref{theorem1} can be recognized by more efficient algorithms than those using Propositions \ref{Libkin} and \ref{recognition}.

\section{Conclusion} In this paper, we showed that most of graph classes investigated in Metric Graph Theory are definable in FOLB. On the other hand,
we showed that several sub- or super-classes of  them  are not definable in FOLB.  Tarski's theory uses the predicates ``betweenness'' and ``congruence''.
For graphs, the {\it congruence} is the quaternary relation $\equiv$ meaning that $uv\equiv u'v'$ if $d(u,v)=d(u',v')$ and we can call the resulting logic
the  \emph{Fist Order Logic with Betweenness and Congruency}, abbreviated  \emph{FOLBC}. However, for graphs it seems that FOLBC is not  more expressible than
FOLB. At least the graph classes not expressible in FOLB remains not expressible in FOLBC and for all classes for which FOLB-definability is open the
FOLBC-definability is open as well. On the other hand, if instead of the congruency axiom one can compute distances between vertices of a graph, then
the property  ``the hyperbolicity of a graph is at most $\delta$'' becomes easily expressible. Distance computation and angle measurement (Scale and Protractor)
is at the basis of classical metric approach to plane geometry due to Birkhoff \cite{Birkhoff} (see the book \cite{MiPa}).  Note that FOLB can be extended to
disconnected graphs \cite{cha18}. Another perspective is to consider the Monadic Second Order Logic with Betweenness for graphs. Finally, an important long
term research perspective is the algorithmic status of FOLB model checking
on  FOLB-definable classes of graphs: : for a FOLB-definable
class $\C$, can we find an algorithm that given a FOLB formula $\phi$
and a graph $G \in \C$ on $n$ vertices check whether $G$ satisfies
$\phi$ in time $f(\phi)\cdot \poly(n)$ where the
polynomial in $n$ is independent of $\phi$.

\end{document}